\numberwithin{equation}{section}
\newtheorem{Theorem}{Theorem}[section]
\newtheorem*{Theorem*}{Theorem}
\newtheorem{Corollary}[Theorem]{Corollary}
\newtheorem{Lemma}[Theorem]{Lemma}
\newtheorem{Proposition}[Theorem]{Proposition}
\newtheorem{obs}[Theorem]{Observation}
 { \theoremstyle{definition}
\newtheorem{Definition}[Theorem]{Definition}

\newtheorem{exas}[Theorem]{Examples}
\newtheorem{exa}[Theorem]{Example}}
\DeclareMathOperator{\ad}{ad}
\DeclareMathOperator{\diver}{div}
\DeclareMathOperator{\dom}{Dom}
\DeclareMathOperator{\ess}{ess}
\DeclareMathOperator{\essran}{essran}
\DeclareMathOperator{\id}{id}
\DeclareMathOperator{\ran}{Ran}
\DeclareMathOperator{\supp}{supp}
\DeclareMathOperator{\tr}{tr}
\def\p{\mathrm{pp}}
\def\d{\mathrm d}
\def\ac{\mathrm{ac}}
\def\sc{\mathrm{sc}}
\def\s{\mathrm s}
\def\C{\mathbb C}
\def\F{\mathbb F}
\def\R{\mathbb R}
\def\ve{\varepsilon}
\def\vf{\varphi}
\begin{document}
\allowdisplaybreaks

\renewcommand{\thefootnote}{}

\newcommand{\arXivNumber}{2301.10986}

\renewcommand{\PaperNumber}{050}

\FirstPageHeading

\ShortArticleName{On the Spectrum of Certain Hadamard Manifolds}

\ArticleName{On the Spectrum of Certain Hadamard Manifolds\footnote{This paper is a~contribution to the Special Issue on Differential Geometry Inspired by Mathematical Physics in honor of Jean-Pierre Bourguignon for his 75th birthday. The~full collection is available at \href{https://www.emis.de/journals/SIGMA/Bourguignon.html}{https://www.emis.de/journals/SIGMA/Bourguignon.html}}}

\Author{Werner BALLMANN~$^{\rm a}$, Mayukh MUKHERJEE~$^{\rm b}$ and Panagiotis POLYMERAKIS~$^{\rm c}$}

\AuthorNameForHeading{W.~Ballmann, M.~Mukherjee and P.~Polymerakis}

\Address{$^{\rm a)}$~Max Planck Institute for Mathematics, Vivatsgasse 7, 53111~Bonn, Germany}
\EmailD{\href{mailto:hwbllmnn@mpim-bonn.mpg.de}{hwbllmnn@mpim-bonn.mpg.de}}

\Address{$^{\rm b)}$~Indian Institute of Technology Bombay, Powai, 400076~Maharashtra, India}
\EmailD{\href{mailto:mathmukherjee@gmail.com}{mathmukherjee@gmail.com}}

\Address{$^{\rm c)}$~Department of Mathematics, University of Thessaly, 3rd km Old National Road Lamia-Athens,\\
\hphantom{$^{\rm c)}$}~35100 Lamia, Greece}
\EmailD{\href{mailto:ppolymerakis@uth.gr}{ppolymerakis@uth.gr}}

\ArticleDates{Received January 27, 2023, in final form July 14, 2023; Published online July 23, 2023}

\Abstract{We show the absolute continuity of the spectrum and determine the spectrum as a set for two classes of Hadamard manifolds and for specific domains and quotients of one of the classes.}

\Keywords{Laplace operator; absolutely continuous spectrum; point spectrum; Hadamard manifold; asymptotically harmonic manifold}

\Classification{58J50; 53C20}

\begin{flushright}
\begin{minipage}{65mm}
\it Dedicated to Jean-Pierre Bourguignon\\ on the occasion of his 75th birthday
\end{minipage}
\end{flushright}

\renewcommand{\thefootnote}{\arabic{footnote}}
\setcounter{footnote}{0}

\section{Introduction}

The spectrum of the Laplacian is a classical invariant in Riemannian geometry.
If the underlying manifold is closed,
then the spectrum of its Laplacian consists of eigenvalues of finite multiplicity,
and many studies are concerned with estimates of the eigenvalues and their multiplicities.
In this paper, we investigate the spectrum of (the Laplacian of) non-compact Riemannian manifolds.
Then the structure of the spectrum is quite different in general.
For example, the spectrum of Euclidean spaces does not have eigenvalues,
but is, what is called absolutely continuous.

For a complete and connected Riemannian manifold $M$, we view its Laplacian $\Delta$
as an unbounded and symmetric operator with domain $C^\infty_{\rm c}(M) \subseteq L^2(M)$.
The closure of $\Delta$, which we also denote by $\Delta$, is a self-adjoint operator in $L^2(M)$.
Its spectrum as a subset of $\R$ will be denoted by $\sigma(M)$.

Functional analysis of self-adjoint operators yields the orthogonal decomposition
\begin{align*}%\label{deco}
L^2(M) = H_{\p}(M) \oplus H_{\ac}(M) \oplus H_{\sc}(M)
\end{align*}
of $L^2(M)$ into $\Delta$-invariant subspaces such that
\begin{enumerate}\itemsep=0pt
\item[(1)] $H_{\p}(M)$ is spanned by eigenfunctions;
\item[(2)] $H_{\ac}(M)$ consists of functions with absolutely continuous spectral measure;
\item[(3)] $H_{\sc}(M)$ consists of functions with singularly continuous spectral measure.
\end{enumerate}
We present some more details about these notions and decompositions in Appendix~\ref{secspec},
since they are needed in our discussion of spectra of Riemannian products.

There is the well known decomposition $\sigma(M)=\sigma_{\d}(M)\cup\sigma_{\ess}(M)$ of the spectrum
as a~disjoint union of \emph{discrete} and \emph{essential spectrum},
where $\lambda\in\R$ belongs to $\sigma_{\ess}(M)$ if $\Delta-\lambda$ is not a~Fredholm operator.
Then $\lambda\in\sigma_{\d}(M)$ if and only if $\lambda$ is an eigenvalue of $\Delta$
of finite multiplicity and is an isolated point of $\sigma(M)$.
Therefore, the preimage in $L^2(M)$ of $\sigma_{\d}(M)$ under the spectral projection associated to $\Delta$
is contained in $H_{\p}(M)$.
In particular, if the essential spectrum of~$\Delta$ is empty, then $H_{\ac}(M)=H_{\sc}(M)=\{0\}$.
The essential spectrum of $\Delta$ is determined by the geometry of $M$ at infinity;
see, for example,~\cite[Proposition 3.6]{BMM17} for a general formulation of this fact.
In particular, the essential spectrum vanishes if $M$ is compact.
But there are also a~number of results about the vanishing of the essential spectrum
for non-compact manifolds;
see, for example,~\cite[Theorems 1.1 and 1.2]{BLM10} and~\cite[Examples 3.7]{BMM17}.

Now the vanishing of the essential spectrum implies the vanishing of the absolutely continuous spectrum,
which is not what we are aiming for.
In fact, we will obtain conditions which imply the absolute continuity of the spectrum, $H_{\ac}(M)=L^2(M)$,
or, in other words, the vanishing of the point and the singular continuous spectrum.
As a byproduct, we will also discuss the vanishing of the point spectrum, $H_{\p}(M)=\{0\}$.
The underlying manifolds will be \emph{Hadamard manifolds}, that is,
complete and simply connected Riemannian manifolds of non-positive sectional curvature.

Recall that a homogeneous Hadamard manifold can be thought of as a simply connected solvable Lie group $S$
with a left-invariant Riemannian metric
and given as a semi-direct product $S=A\ltimes N$, where $A$ is Abelian and $N$ is nilpotent.
Our main result is the following

\begin{Theorem}\label{onlyach}
If $M$ is a homogeneous Hadamard manifold,
then the spectrum of $M$ is absolutely continuous with $\sigma(M)=\bigl[h^2/4,\infty\bigr)$,
where $h$ is the mean curvature of $N$ in $S\cong M$.
\end{Theorem}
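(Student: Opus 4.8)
\emph{Setup.} The plan is to realize $M$ in its Lie-group form and combine a soft lower bound for $\Delta$ with a direct-integral analysis along the nilradical. Write $M=S=A\ltimes N$ with left-invariant metric, and fix orthonormal bases $H_1,\dots,H_k$ of $\mathfrak a=\Lie(A)$ and $X_1,\dots,X_m$ of $\mathfrak n=\Lie(N)$, with left-invariant extensions $\widetilde H_j,\widetilde X_\alpha$. Let $H_0\in\mathfrak a$ be the vector with $\langle H_0,H\rangle=\tr(\ad_H|_{\mathfrak n})$ for all $H\in\mathfrak a$; a Koszul-formula computation shows $\langle H_0,H\rangle$ is the trace of the second fundamental form of $N\subseteq S$ in the direction $H$, so $|H_0|$ equals the mean curvature $h$ of $N$. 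Since $\ad_H$ kills $\mathfrak a$ and preserves $\mathfrak n$, one gets $\det\Ad(\exp H)=e^{\langle H_0,H\rangle}$ on $\mathfrak s$, so $S$ is unimodular iff $h=0$ iff $M$ is flat; in general the Riemannian volume is a left Haar measure with density $\propto e^{-\langle H_0,\cdot\rangle}$ in coordinates adapted to $A\ltimes N$, and the left-invariant Laplacian is $\Delta=-\sum_j\widetilde H_j^{\,2}-\sum_\alpha\widetilde X_\alpha^{\,2}\mp\widetilde H_0$, the first-order term reflecting non-unimodularity (note $\diver\widetilde X_\alpha=0$ since $\tr\ad_{X_\alpha}=\tr(\ad_{X_\alpha}|_{\mathfrak n})=0$).

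\emph{Lower bound.} Let $X$ be the left-invariant unit vector field in the direction of $\pm H_0$, chosen so that $\diver X\equiv-h$; this divergence is constant because $\tr\ad_{H_0}=\langle H_0,H_0\rangle=h^2$ and $\diver\widetilde X_\alpha=0$, and $|X|\equiv|H_0|/h=1$ by left-invariance. For $f\in C^\infty_{\rm c}(M)$, integrating $\diver(f^2X)$ over $M$ gives $h\|f\|^2=2\int_M f\,Xf\le 2\|f\|\,\|\nabla f\|$, hence $\langle\Delta f,f\rangle=\|\nabla f\|^2\ge(h^2/4)\|f\|^2$. As $C^\infty_{\rm c}(M)$ is a core, $\sigma(M)\subseteq[h^2/4,\infty)$.

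\emph{Direct-integral analysis.} For the reverse inclusion and absolute continuity, decompose $L^2(S)$ by Fourier analysis along the normal nilpotent subgroup $N$: a direct integral over $\widehat N$ (with $A$ acting, via the Mackey/Kirillov description), which $\Delta$ respects because the metric and $\Delta$ are left-$S$-invariant. After the unitary normalization $f\mapsto e^{-\langle H_0,\cdot\rangle/2}f$ removing the volume density and using the abelian directions $A\cong\R^k$, each summand becomes unitarily equivalent to a Schrödinger-type operator on $L^2(\R^k)$, tensored with the polarization space of the representation, of the form $-\Delta_{\R^k}+h^2/4+\mathcal V_\pi$, where $\mathcal V_\pi\ge 0$ is built from the $\ad_{\mathfrak a}$-weights on $\mathfrak n$ (exponential, harmonic-oscillator-type terms). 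Non-positivity of the curvature forces those weights into a pointed convex cone with $H_0$ in the interior of its dual, so for $\pi\neq 0$ the bottom of $\mathcal V_\pi(\vec a)$ tends to $+\infty$ as $\vec a$ runs to infinity in the horosphere-contracting directions and to $0$ in the complementary directions. A one-dimensional scattering analysis for $\dim A=1$ (and its higher-rank analogue in general) then shows each such operator is purely absolutely continuous with spectrum exactly $[h^2/4,\infty)$: it is bounded below by $h^2/4$ by the previous step; the (super-)exponential decay of the potential at the "open" end excludes embedded eigenvalues; and $\inf_{\vec a}\mathcal V_\pi(\vec a)=0$ pushes the spectrum down to the threshold. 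Reassembling over $\widehat N$ — using the facts on absolutely continuous spectra of direct integrals recorded in Appendix~\ref{secspec} — yields $\sigma(M)=[h^2/4,\infty)$ with $H_{\ac}(M)=L^2(M)$.

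\emph{Main obstacle.} The soft lower bound is immediate; all the work is in the direct-integral step. The delicate points are: tracking how the left-invariant $\Delta$ transforms under the non-abelian Fourier transform on $N$, including the first-order drift and "magnetic" contributions; pinning down the asymptotics of the fiber potentials from the weight structure imposed by non-positive curvature; and, in the higher-rank case, ruling out embedded eigenvalues of the resulting multidimensional Schrödinger operators with sums-of-exponentials potentials. It is natural to organize this as an induction peeling off the center of $N$: Fourier transform in $Z(N)$, where the zero fibre recovers the smaller group $A\ltimes(N/Z(N))$ and the nonzero fibres contribute confining harmonic-oscillator terms, with base case $N=\{e\}$, $M=\R^k$, which is trivial.
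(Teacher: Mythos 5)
Your first two steps are fine: the Cheeger-type integration of $\diver\bigl(f^2X\bigr)$ for the left-invariant field in the $H_0$-direction gives $\sigma(M)\subseteq\bigl[h^2/4,\infty\bigr)$ exactly as in Observation~\ref{chee}, and decomposing $L^2(S)$ by the group Fourier transform along the normal nilpotent subgroup $N$ is legitimate (Kirillov theory applies to $N$, and $\Delta$, being left-invariant, is decomposable over $\widehat N$). The genuine gap is in the fiber analysis, which is where the whole proof lives. For non-abelian $N$ the fiber over $\pi\in\widehat N$ is not a Schr\"odinger operator with a nonnegative scalar potential on $L^2\bigl(\R^k\bigr)$: it has the form $-\Delta_{\R^k}+h^2/4+\mathcal V_\pi(\vec a)$ where $\mathcal V_\pi(\vec a)=-\sum_\alpha {\rm d}\pi\bigl(\Ad(\exp\vec a)X_\alpha\bigr)^2$ is an unbounded operator-valued potential on the infinite-dimensional space $\mathcal H_\pi$, and the operators $\mathcal V_\pi(\vec a)$ for different $\vec a$ do not commute in general (already beyond the Damek--Ricci/rank-one setting, where the further Hermite-type diagonalization you allude to works only because the $\vec a$-dependence is through scalar factors). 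So there is no reduction to ``a one-dimensional scattering analysis and its higher-rank analogue'': pure absolute continuity, absence of embedded eigenvalues, and identification of the spectrum as exactly $\bigl[h^2/4,\infty\bigr)$ for multidimensional Schr\"odinger operators with non-commuting, anisotropic, exponentially growing/decaying operator potentials is precisely the hard content, and it is asserted rather than proved; it is not an off-the-shelf result. Two further soft spots: the induction ``peeling off the center'' does not work as stated, since the zero fiber over $\widehat{Z(N)}$ has Plancherel measure zero, so all the spectral weight sits in the nonzero fibers, which are exactly the hard ones; and the claimed cone/weight structure (``$H_0$ interior to the dual cone of the weights'') needs the reduction to vanishing Euclidean factor and the structure theory of \cite{Wolter91}, which you use implicitly but never establish.

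For comparison, the paper avoids the fiberwise spectral analysis entirely. Absolute continuity is proved via the Rellich-type identity \eqref{rell} and Kato-smoothness in the style of Xavier \cite{Xavier1988}: by Wolter's structure theory $S$ fixes a point $\xi\in M_\infty$, the left-invariant field $X=-V$ is the gradient of a Busemann function centered at $\xi$, $\diver X=h$ is constant, and $\nabla X\ge c>0$ on the distribution coming from $\mathfrak n$; this makes every differentiation $T_Y$ along compactly supported fields $Y\perp S\mathfrak a$ a $\Delta$-smooth operator, so $\ran T_Y^*\subseteq H_{\ac}(M)$, and any $u\perp H_{\ac}(M)$ is constant on the (infinite-volume) $N$-orbits, hence zero. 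The equality $\sigma(M)=\bigl[h^2/4,\infty\bigr)$ is then obtained from Theorem~\ref{p13} (the submersion theorem of \cite{P21}) applied to the free isometric action of the derived group $N$: the quotient is flat with spectrum $[0,\infty)$ and the associated Schr\"odinger operator has constant potential $h^2/4$, giving $\bigl[h^2/4,\infty\bigr)\subseteq\sigma(M)$ and $\lambda_0(M)=h^2/4$ (see Theorem~\ref{specsn}). If you want to salvage your route, you would have to either restrict to the special cases where the fiber potentials commute (essentially rank one, harmonic $AN$-type groups) or supply a genuinely new spectral-theoretic argument for the operator-valued, higher-rank case.
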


An explicit formula for $h$ is given in Theorem~\ref{specsn},
where we also show that the equality $\sigma(M)=\bigl[h^2/4,\infty\bigr)$ holds in greater generality,
namely for left-invariant Riemannian metrics on Lie groups $G$,
which are solvable or compact extensions of solvable Lie groups,
such that their derived subgroups are not cocompact in $G$.

Concerning the identification of homogeneous Hadamard manifolds
with certain simply connected solvable Lie groups with left-invariant Riemannian metrics,
one may ask, whether representation theory could be a tool to study the spectrum of their Laplacians.
However, in general, in our situation the underlying Lie groups are not exponential solvable,
see~\cite[p.~26 and p.~37]{AzencottWilsonI76} and~\cite[Remark (ii) on p.~16]{AzencottWilsonII76},
so that at least the extension of Kirillov theory by Auslander and Kostant does not seem to apply.

For Euclidean and symmetric spaces of non-compact type,
there is much more precise information on their spectrum.
For example, the Fourier transform yields a unitary equivalence between the Laplacian in $L^2(\R^m)$
and multiplication by $|x|^2$ in $L^2(\R^m)$,
which implies that the spectrum of the Laplacian is absolutely continuous with spectrum $[0,\infty)$.
Similar, but more involved, characterizations of the Laplacian are known
in the case of symmetric spaces of non-compact type.

We say that a Hadamard manifold $M$ is \emph{asymptotically harmonic about a point $\xi\in M_\infty$}
if there is a number $h\ge0$ such that each horosphere in $M$ with center $\xi$ has constant mean curvature $h$.
Hyperbolic spaces $H^k_{\F}$, endowed with Fubini--Study metrics, are asymptotically harmonic about any point in $M_\infty$, where $\lambda_0=h^2/4>0$.
Asymptotically harmonic manifolds in the usual sense are asymptotically harmonic about any point in $M_\infty$.
Such manifolds arise, for example, in investigations on the rigidity of geodesic flows.
Furthermore, since horospheres are limits of spheres,
harmonic Hadamard manifolds are asymptotically harmonic about any point in $M_\infty$.
However, homogeneous Hadamard manifolds without Euclidean factor,
which are not hyperbolic spaces, are asymptotically harmonic about some, but not any point $M_\infty$.
A second main result of this article is

\begin{Theorem}\label{onlyacm}
If $K_M<0$ and $M$ is asymptotically harmonic about a point $\xi\in M_\infty$,
then the spectrum of $\Delta$ on $M$ is absolutely continuous with $\sigma(M)\subseteq\bigl[h^2/4,\infty\bigr)$,
where $h>0$ is the mean curvature of the horospheres with center $\xi$.
Moreover, if the sectional curvature of $M$ is negatively pinched and the covariant derivative of the curvature tensor of $M$ is uniformly bounded, then $\sigma(M)=\bigl[h^2/4,\infty\bigr)$.
\end{Theorem}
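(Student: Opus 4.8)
The plan is to pass to horospherical coordinates centered at $\xi$ and reduce $\Delta$ to a Schr\"odinger-type operator to which the positive commutator method applies. Let $b=b_\xi$ be the Busemann function centered at $\xi$, normalized to vanish on a fixed horosphere $N_0$; then $|\nabla b|\equiv 1$, and asymptotic harmonicity about $\xi$ is precisely the statement $\Delta b\equiv -h$. The unit-speed gradient flow of $b$ yields a diffeomorphism $\R\times N_0\to M$ identifying $b$ with the coordinate $t$, the metric with $dt^2+g_t$ (where $g_t$ is the induced metric on the horosphere $\{b=t\}$), and the Riemannian volume with $e^{ht}\theta_0\,dt\,dx$ for a density $\theta_0$ on $N_0$ — this factorization of the volume density as $e^{ht}$ times a function on $N_0$, together with the cancellation it produces after conjugation, is exactly where asymptotic harmonicity enters. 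Conjugating with the unitary $f\mapsto e^{ht/2}f$ from $L^2(M)$ onto $L^2(\R\times N_0,\theta_0\,dt\,dx)$ transforms $\Delta$, by a direct computation, into
\begin{align*}
\tilde\Delta=-\partial_t^2+\frac{h^2}{4}+\Delta_{g_t},
\end{align*}
where $\Delta_{g_t}\ge 0$ is the Laplacian of $(N_0,g_t)$. Since $\Delta_{g_t}\ge 0$ we get at once $\tilde\Delta\ge h^2/4$, hence $\sigma(M)\subseteq[h^2/4,\infty)$; alternatively this follows from $\Delta(e^{-hb/2})=(h^2/4)e^{-hb/2}$ and the McKean-type inequality $\int|\nabla f|^2\ge\frac{h^2}{4}\int f^2$ it yields.

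For the absolute continuity I would run the positive commutator (Mourre) method with conjugate operator $A=-i\partial_t$, which is self-adjoint on $L^2(\R\times N_0,\theta_0\,dt\,dx)$ because $\theta_0$ does not depend on $t$. One computes $[\tilde\Delta,iA]=-(\partial_t\Delta_{g_t})$, the operator obtained by differentiating the coefficients of $\Delta_{g_t}$ in $t$, and its positivity is the crux. Using $\partial_tg_t=2\,\mathrm{II}_t$, where $\mathrm{II}_t$ denotes the second fundamental form of $\{b=t\}$, so that $\partial_tg_t^{ij}=-2g_t^{ik}g_t^{jl}(\mathrm{II}_t)_{kl}$, one obtains for $\psi$ in a suitable core
\begin{align*}
\langle\psi,[\tilde\Delta,iA]\psi\rangle=-\int_{\R\times N_0}\theta_0\,(\partial_tg_t^{ij})\,\partial_i\psi\,\partial_j\psi\,dx\,dt=2\int_{\R\times N_0}\theta_0\,\mathrm{II}_t(\nabla^{g_t}\psi,\nabla^{g_t}\psi)\,dx\,dt\ge 0,
\end{align*}
where $K_M<0$ enters because it forces $\mathrm{II}_t$ to be positive definite. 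Consequently the commutator is injective: vanishing of the right-hand side forces $\nabla^{g_t}\psi(t,\cdot)=0$ for a.e.\ $t$, and since horospheres have infinite volume this forces $\psi=0$. A standard virial argument with this identity already excludes eigenvalues of $\tilde\Delta$, and feeding the injective nonnegative commutator into the limiting absorption principle of the conjugate operator method yields $H_{\ac}(M)=L^2(M)$. I expect this last, purely functional-analytic, step to be the main obstacle: since $A$ is unbounded and the commutator is only nonnegative (not bounded below by a positive constant on spectral subintervals), one cannot quote the Putnam--Kato theorem directly, but must verify the regularity of $\tilde\Delta$ relative to $A$ (roughly, class $C^{1,1}(A)$, which amounts to controlling $\partial_t\Delta_{g_t}$ and its iterated commutator with $A$ as relatively bounded perturbations) and then invoke the version of the method valid for non-strictly positive, injective commutators.

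For the reverse inclusion $\sigma(M)\supseteq[h^2/4,\infty)$ under the additional hypotheses, I would construct, for each $\kappa\in\R$, a Weyl sequence for $\tilde\Delta$ at energy $\kappa^2+h^2/4$ of the form $g_n(t,x)=e^{i\kappa t}\chi(t/R_n)\psi_n(x)$ with $R_n\to\infty$. The factor $e^{i\kappa t}$ is a generalized eigenfunction of $-\partial_t^2+h^2/4$ at $\kappa^2+h^2/4$ (equivalently, $e^{(-h/2+i\kappa)b}$ is a generalized eigenfunction of $\Delta$ on $M$ for that energy), so the $t$-part contributes an error that is small relative to $\|g_n\|$, and the whole problem reduces to choosing $\psi_n\in C^\infty_{\rm c}(N_0)$, normalized in $L^2(N_0,\theta_0\,dx)$, with $\|\Delta_{g_t}\psi_n\|$ arbitrarily small uniformly for $t$ in the support of $\chi(\cdot/R_n)$. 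Here negative pinching and the uniform bound on the covariant derivative of the curvature tensor are used: they provide uniform two-sided control on $\mathrm{II}_t$ and on its first covariant derivative, hence uniform comparability and $C^1$-closeness of the metrics $g_t$ over bounded $t$-intervals; combined with the fact that horospheres in Hadamard manifolds of pinched negative curvature have polynomial volume growth — so that the cutoff of a large metric ball in $N_0$ has arbitrarily small Rayleigh-type quotient (along a suitable sequence of radii) — this produces the required $\psi_n$. Since $\sigma(M)$ is closed and, by the same construction with $\kappa=0$, contains $h^2/4$ as well, we conclude $\sigma(M)=[h^2/4,\infty)$.
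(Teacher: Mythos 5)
Your reduction to horospherical coordinates, the conjugated operator $-\partial_t^2+h^2/4+\Delta_{g_t}$, and the inclusion $\sigma(M)\subseteq\bigl[h^2/4,\infty\bigr)$ are fine (the paper gets the inclusion from the Cheeger inequality, Observation~\ref{chee}), and your commutator positivity $\partial_t g_t=2\,\mathrm{II}_t>0$ is exactly the geometric input the paper uses in the form $\nabla^2b>0$ on horospherical directions. But the step you yourself flag as ``the main obstacle'' is a genuine gap, not a routine verification: with the unbounded conjugate operator $A=-{\rm i}\partial_t$ and a commutator that is merely non-negative and injective, neither Kato--Putnam (which needs $A$ bounded) nor Mourre theory (which needs a strict localized estimate $E_I[\tilde\Delta,{\rm i}A]E_I\ge cE_I+K$) applies; such a strict estimate genuinely fails here, because under $K_M<0$ alone $\mathrm{II}_t$ has no uniform positive lower bound and, even with pinching, the tangential Dirichlet form controls no fixed multiple of the $L^2$-norm. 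I know of no abstract theorem yielding $H_{\ac}=L^2$ from an injective non-negative commutator in this generality, and the first half of the theorem assumes no pinching at all. The paper closes exactly this hole differently: following Xavier, it proves the quantitative estimate $\|T_Yu\|_2^2\le C\|(\Delta-\lambda)u\|_2(\|\Delta u\|_2+\|u\|_2)$ for directional derivatives $T_Y$ along \emph{compactly supported} horospherical fields $Y$ (so that $\nabla^2b\ge C_1>0$ on $\supp Y$ suffices), deduces via the smooth-operator criterion \eqref{Kato7}--\eqref{Kato9} that $\ran T_Y^*\subseteq H_{\ac}(\Delta)$, and then argues separately that any $u\perp H_{\ac}$ is constant along horospheres and hence zero by square-integrability. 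Your argument would need to be recast in some such localized form to be complete.

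The second part also has a concrete defect as written. Your quasimode $e^{{\rm i}\kappa t}\chi(t/R_n)\psi_n(x)$ has $t$-support of length comparable to $R_n\to\infty$, so ``uniform comparability and $C^1$-closeness of the metrics $g_t$ over bounded $t$-intervals'' does not cover the support, and the claim that horospheres of pinched Hadamard manifolds have polynomial volume growth (hence test functions of arbitrarily small Rayleigh-type quotient) is not established and is not needed. The paper instead fixes one cross-sectional function $\chi_0$ on a horosphere, extends it constantly along the flow of $\nabla b$, and uses the estimates $|\nabla\chi|\le {\rm e}^{-at}|\nabla\chi_0|$ and $|\Delta\chi|\le c_0{\rm e}^{-at}$ (the latter from~\cite{BP21b}, which is precisely where the pinching and the bound on $\nabla R_M$ enter), then multiplies by a compactly supported approximate eigenfunction of the one-dimensional operator $-w''+h^2w/4$ shifted far out in $t$; the smallness of the error comes from the exponential decay in $t$, not from spreading inside the horosphere. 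Your construction can likely be repaired along these lines, but the mechanism you invoke is not the one that works, and the volume-growth claim should be dropped.
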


Say that a group of isometries of a Hadamard manifold $M$ is \emph{elementary}
if it is discrete and without torsion, fixes a point $\xi\in M_\infty$,
and such that it is of one of the following two types:
\begin{enumerate}[label=(\alph*)]\itemsep=0pt
\item\label{ax} $\Gamma$ leaves a geodesic $\gamma$ in $M$ emanating from $\xi$ invariant.
\item\label{pa} $\Gamma$ leaves Busemann functions associated to $\xi$ invariant.
\end{enumerate}
In the first case, $C=\Gamma\backslash\gamma$ is a circle and the projection $M\to\gamma$
along horospheres with center~$\xi$ descends to a Riemannian submersion $\pi\colon N\to C$,
where $N=\Gamma\backslash M$.
In the second case,
any Busemann function $b$ associated to $\xi$ is invariant under $\Gamma$
and hence pushes down to $N$ and defines a Riemannian submersion $\pi=b\colon N\to\R$.

\begin{Theorem}\label{onlyacn}
For $M$ as in Theorem~{\rm \ref{onlyacm}}, if $\Gamma$ is an elementary group of isometries of $M$ fixing~$\xi$,
then the spectrum of the quotient $N=\Gamma\backslash M$ is absolutely continuous
with $\sigma(N)\subseteq\bigl[h^2/4,\infty\bigr)$, where $h>0$ denotes the mean curvature of the fibers of $\pi$.
Moreover,
\begin{enumerate}[label={\rm (\arabic*)}]\itemsep=0pt
\item
if $N$ is of type {\rm \ref{pa}} and the fibers of $\pi$ are of finite volume or
\item
if $N$ is of type {\rm \ref{pa}}, the fibers of $\pi$ are of infinite volume,
the sectional curvature of $M$ is negatively pinched,
and the covariant derivative of the curvature tensor of $M$ is uniformly bounded,
\end{enumerate}
then $\sigma(N)=\bigl[h^2/4,\infty\bigr)$.
\end{Theorem}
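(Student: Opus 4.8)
The plan is to reduce everything to the ``model operator'' analysed in the proof of Theorem~\ref{onlyacm}. Let $b=b_\xi$ be a Busemann function centred at $\xi$ and set $t=b$. The gradient flow of $b$ identifies $M$ with $\R\times H$, $H$ a horosphere, with metric $\d t^2+g_t$; asymptotic harmonicity about $\xi$ means precisely that $\partial_t\log\sqrt{\det g_t}\equiv h$, whence $\d\mathrm{vol}_{g_t}=e^{ht}\,\d\mathrm{vol}_{g_0}$ and $\Delta_M=-\partial_t^2-h\partial_t+\Delta_{g_t}$, with $\Delta_{g_t}$ the Laplacian of $(H,g_t)$. Conjugating by $e^{-ht/2}$ gives $\widetilde\Delta_M=-\partial_t^2+h^2/4+P_t$ on $L^2(\R;\mathcal H_0)$, $\mathcal H_0=L^2(H,\d\mathrm{vol}_{g_0})$, where $P_t=\Delta_{g_t}\ge 0$; here constancy of the mean curvature is exactly what keeps the conjugating factor independent of $H$, so no first-order term survives. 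Since $\Gamma$ fixes $\xi$, it respects this product structure. In case~\ref{pa}, $b$ is $\Gamma$-invariant, so everything descends: $N\cong\R\times F$ with $F=\Gamma\backslash H$ a fibre of $\pi=b$, metric $\d t^2+\bar g_t$ with $\d\mathrm{vol}_{\bar g_t}=e^{ht}\,\d\mathrm{vol}_{\bar g_0}$, and conjugating $\Delta_N$ by $e^{-h\pi/2}$ yields $-\partial_t^2+h^2/4+\bar P_t$ on $L^2(\R;L^2(F))$ with $\bar P_t=\Delta_{\bar g_t}\ge 0$. In case~\ref{ax}, the generator of $\Gamma$ shifts $b$ by the translation length $\ell>0$ and acts on $\R\times H$ by $(t,x)\mapsto(t+\ell,\psi x)$ for a fixed diffeomorphism $\psi$; now $e^{-h\pi/2}$ is not available, but conjugation still identifies $\Delta_N$ with $-\partial_t^2+h^2/4+P_t$ acting on the space of ``skew-periodic'' sections $g$, $g(t+\ell,\psi x)=e^{h\ell/2}g(t,x)$, square-integrable over one period.

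The inclusion $\sigma(N)\subseteq[h^2/4,\infty)$ then follows, and in fact has a direct proof: $\omega=\d b$ descends to a closed $1$-form on $N$ (in case~\ref{ax} because $b$ only changes by a constant under $\Gamma$) with $|\omega|\equiv 1$ and $\d^{*}\omega$ the constant $\pm h$, so that for $f\in C^\infty_{\rm c}(N)$ the inequality $0\le\|\d f\pm\tfrac h2 f\omega\|^2$, after integration by parts, gives $\|\d f\|^2\ge\tfrac{h^2}{4}\|f\|^2$, i.e.\ $\inf\sigma(\Delta_N)\ge h^2/4$. For the absolute continuity one adapts the argument of Theorem~\ref{onlyacm}: it proves $H_{\ac}=L^2$ for the model operator using only structural features of the family $\{P_t\}$ --- nonnegativity; the monotonicity $\partial_t P_t\le 0$ (convexity of the horoballs), which makes the commutator of the model operator with the generator of translations in $t$ nonnegative; the blow-up $P_t\to\infty$ as $t\to-\infty$; and, under negative pinching and a uniform bound on $\nabla R$, the controlled decay $P_t\to 0$ as $t\to+\infty$. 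All of these pass to the quotient family $\bar P_t$, which consists of isometric quotients of the $P_t$, and none is touched by the skew-periodicity of case~\ref{ax}, which concerns only the base variable. Hence $\sigma(N)$ is absolutely continuous.

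For the equality in the ``moreover'' part, both cases are of type~\ref{pa}. In case~(1) the fibres have finite volume, so the constants on $F$ lie in $\ker\bar P_t$ for all $t$, making $L^2(\R)\otimes\C\cdot 1\subseteq L^2(\R;L^2(F))$ a reducing subspace on which the model operator acts as $-\partial_t^2+h^2/4$, of spectrum $[h^2/4,\infty)$; together with the inclusion this gives $\sigma(N)=[h^2/4,\infty)$, with no curvature hypothesis needed. In case~(2) the fibres have infinite volume, so constants are not square-integrable; instead one builds, for each $\lambda\ge h^2/4$, a Weyl sequence from functions concentrated where $t\to+\infty$, using negative pinching and the bound on $\nabla R$ to force $\bar P_t\to0$ there in a quantitatively controlled way, so that the model operator is well approximated by $-\partial_t^2+h^2/4$; this is the mechanism in the second half of Theorem~\ref{onlyacm}. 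In case~\ref{ax} the base $C=\Gamma\backslash\gamma$ is a circle, so neither device is available (no region $t\to+\infty$, and no square-integrable constant fibre function), and $\inf\sigma(\Delta_N)$ may exceed $h^2/4$; hence only the inclusion is asserted there.

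The main obstacle is the absolute-continuity step for non-homogeneous $M$ with infinite-volume fibres, that is, converting the qualitative properties of $\{P_t\}$ into $H_{\ac}=L^2$. The positive commutator $-\partial_t P_t$ is degenerate, so it alone yields no Mourre estimate; one must either combine $-i\partial_t$ with the dilation generator in $t$, or exploit the decay of $P_t$ as $t\to+\infty$ to run a limiting-absorption/scattering comparison of the model operator with $-\partial_t^2+h^2/4$ on a half-line $t\ge T_0$ (the part $t\le T_0$ contributing spectrum only above any prescribed threshold since $P_t\to\infty$ there) --- and it is precisely here that the pinching and the bound on $\nabla R$ are used. Once this model-operator analysis is available, the passage to the quotient --- replacing $H$ by $F$ in case~\ref{pa}, accommodating the skew-periodicity in case~\ref{ax} --- is routine.
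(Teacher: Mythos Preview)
Your handling of the inclusion $\sigma(N)\subseteq[h^2/4,\infty)$ and of the two ``moreover'' cases is in line with the paper: the paper gets the inclusion from a Cheeger-type computation (Observation~\ref{chee}), which is essentially your $\|df\pm\frac{h}{2}f\omega\|^2\ge0$ argument; for case~(1) it isolates the subspace $L^2_b(N)$ of fibrewise-constant functions, conjugates by $e^{ht/2}$ to the Schr\"odinger operator $-w''+h^2w/4$ on $L^2(\R)$, and reads off $[h^2/4,\infty)$, which is exactly your reducing subspace $L^2(\R)\otimes\C\cdot1$; and for case~(2) it builds Weyl sequences $\chi\,b^*v$ by pulling back approximate eigenfunctions $v$ of the model operator and cutting them off along a horosphere by a fixed $\chi_0$ pushed forward by the gradient flow, using the pinching and the bound on $\nabla R$ to make $\Delta\chi\to0$ as $t\to\infty$.

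The genuine gap is the absolute continuity. You describe the proof of Theorem~\ref{onlyacm} as a model-operator analysis based on monotonicity and decay of $P_t$, and then concede in your last paragraph that turning those qualitative features into $H_{\ac}=L^2$ is an ``obstacle'' requiring a Mourre or limiting-absorption argument that you have not carried out. That is not how the paper proceeds, and your proposed route would force the extra hypotheses (pinching, bounded $\nabla R$) already at the absolute-continuity stage, whereas the theorem asserts absolute continuity under $K_M<0$ alone. The paper instead integrates the Rellich-type identity~\eqref{rell} with the $\Gamma$-invariant field $X=\nabla b$: since $\diver X=h$ is constant, \eqref{x1}--\eqref{x2a} give
\[
\int_N(\Delta u-\lambda u)\,P_Xu=\int_N\langle\nabla_{\nabla u}X,\nabla u\rangle\ge C_1\int_N|\nabla^\perp u|^2,
\]
because $\nabla X=\nabla^2 b$ is positive definite on the horospherical directions by $K_M<0$. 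Combined with \eqref{x3} this shows, via Xavier's criterion \eqref{Kato9}, that $T_Y$ is $\Delta$-smooth for every compactly supported $Y\perp X$, hence any $u\perp H_{\ac}(N)$ satisfies $Yu=0$ distributionally for all such $Y$ and is therefore constant along the fibres of $\pi$. From there the argument bifurcates: if the fibres have infinite volume (always in type~\ref{ax}), square-integrability forces $u=0$; if they have finite volume, one is reduced to $L^2_b(N)$, handled as above. No decay of $P_t$, no Mourre estimate, and no curvature pinching enter the absolute-continuity step.
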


Since Euclidean space $\R^m$ has vanishing curvature and a basis of parallel differential forms,
its Hodge--Laplacian $({\rm d}+{\rm d}^*)^2$ decomposes into parts which are unitarily equivalent to its Laplacian on functions.
In particular, its differential form spectrum is absolutely continuous.
One might ask whether the differential form spectrum of Hadamard manifolds as considered above
is also absolutely continuous.
Except for symmetric spaces,
we do not know of any such general result.
But there are results about the essential spectrum of the Hodge--Laplacian,
given sufficiently strong pinching of sectional curvature (depending on dimension and degree),
see for ex\-am\-ple~\cite{DX84,Kas94},
and, for asymptotically hyperbolic manifolds, on the non-existence of eigenvalues
above the essential spectrum, see~\cite[Theorem 16]{Ma91}.

Regarding the absolute continuity of the spectrum,
we rely on the same integral formulae as the ones used by Xavier~\cite{Xavier1988} and Donnelly--Garofalo~\cite{DoGa92,DoGa97},
using, in particular, arguments from~\cite{Xavier1988}.
One important difference to the latter is that we also use functions which are convex,
but not strictly convex.
The determination of the spectra as sets is by different arguments,
where we rely on~\cite{P21} in the case of homogeneous Hadamard manifolds.

%%%%%%%%%%%%%%%%%%%%%%%%%%%%%%%%%%%%%%%%%%%%%%
\section{Preliminaries}
%%%%%%%%%%%%%%%%%%%%%%%%%%%%%%%%%%%%%%%%%%%%%%

For a Hadamard manifold $M$ and points $\xi\in M_\infty$ and $x\in M$,
the Busemann function $b$ associated to $\xi$ with $b(x)=0$ is given by
\begin{align*}
	b(y) = \lim_{n\to\infty} (d(x_n,y)-d(x_n,x)),
\end{align*}
where $(x_n)$ is any sequence in $M$ converging to $\xi$.
Recall that the limit exists and that $b$ is a $C^2$ distance function on $M$
with horoballs and horospheres centered at $\xi$ as sublevel and level sets;
cf.~\cite{Ba95} for this and other facts about Hadamard manifolds.

\begin{obs}\label{chee}
If a Hadamard manifold $M$ is asymptotically harmonic about a point $\xi\in M_\infty$
or if $N=\Gamma\backslash M$ is of type {\rm \ref{pa}},
then the Cheeger constant of $M$ respectively $N$ is at least $h$,
where~$h$ is the mean curvature of the horospheres with center $\xi$.
In particular, the spectrum of~$M$ respectively $N$ is contained in $\bigl[h^2/4,\infty\bigr)$.
\end{obs}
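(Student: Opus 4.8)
The plan is to bound the Cheeger constant of $M$ (respectively $N$) from below by $h$ by comparing with the gradient field of a Busemann function, and then to deduce the spectral statement from Cheeger's inequality $\lambda_0\ge h_{\mathrm C}^2/4$, valid for complete, possibly non-compact, manifolds; here $\lambda_0=\inf\sigma$ and $h_{\mathrm C}$ is the Cheeger constant, i.e., the infimum of $\Area(\partial\Omega)/\mathrm{vol}(\Omega)$ over precompact open $\Omega$ with smooth boundary. Throughout, $M$ is assumed asymptotically harmonic about $\xi$ --- in the type~{\rm \ref{pa}} case this is part of the ambient hypotheses --- so that the horospheres with center $\xi$ have constant mean curvature $h$.

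I would begin from the standard properties of a Busemann function $b$ centered at $\xi$: it is $C^2$, $|\nabla b|\equiv 1$, and its level sets are exactly the horospheres with center $\xi$. Since $|\nabla b|$ is constant, $\Hess b(\nabla b,\nabla b)=0$, so at each point $\diver(\nabla b)=\tr\Hess b$ equals the mean curvature, with respect to $\nabla b$, of the horosphere through that point; asymptotic harmonicity about $\xi$ therefore gives $\diver(\nabla b)\equiv h$ on all of $M$. (With the sign convention $\Delta=-\diver\nabla$ this reads $\Delta b\equiv-h$.) Then for any precompact open $\Omega\subseteq M$ with smooth boundary and outward unit normal $\nu$, the divergence theorem and the pointwise bound $\langle\nabla b,\nu\rangle\le|\nabla b|=1$ give
\begin{align*}
h\,\mathrm{vol}(\Omega)=\int_\Omega\diver(\nabla b)\,\d V=\int_{\partial\Omega}\langle\nabla b,\nu\rangle\,\d A\le\Area(\partial\Omega).
\end{align*}
Dividing by $\mathrm{vol}(\Omega)$ and taking the infimum over all such $\Omega$ yields $h_{\mathrm C}(M)\ge h$, hence $\sigma(M)\subseteq[\lambda_0(M),\infty)\subseteq\bigl[h^2/4,\infty\bigr)$.

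For $N=\Gamma\backslash M$ of type~{\rm \ref{pa}}, the Busemann function $b$ is $\Gamma$-invariant by the very definition of this type, so it descends to a $C^2$ function on $N$; since $M\to N$ is a local isometry, the descended function still has unit gradient and the divergence of its gradient is identically $h$. Running the same divergence-theorem estimate over precompact open subsets of $N$ with smooth boundary then gives $h_{\mathrm C}(N)\ge h$, and hence $\sigma(N)\subseteq\bigl[h^2/4,\infty\bigr)$. I do not expect a genuine obstacle here: the only points requiring care are keeping the sign conventions for mean curvature and the Laplacian consistent, and invoking Cheeger's inequality in the non-compact setting. (Alternatively, the spectral bound can be obtained directly by testing the Rayleigh quotient with $f\in C^\infty_{\rm c}$ and using $\int_M h f^2\,\d V=-2\int_M f\langle\nabla b,\nabla f\rangle\,\d V$ together with the Cauchy--Schwarz inequality.)
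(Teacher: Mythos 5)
Your proposal is correct and follows essentially the same route as the paper: apply the divergence theorem to $X=\nabla b$ with $|X|=1$ and $\diver X=h$ over precompact smooth domains to get $h|D|\le|\partial D|$, then invoke Cheeger's inequality; the descent to $N$ in the type \ref{pa} case via $\Gamma$-invariance of $b$ is exactly what the paper uses implicitly. Your extra remarks on sign conventions and the direct Rayleigh-quotient alternative are fine but not needed.
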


Throughout the article,
we use the absolute value sign to denote the appropriate volumes of sets under discussion.

\begin{proof}[Proof of Observation~\ref{chee}]
Let $D$ be a compact domain in $M$ respectively $N$ with smooth boundary.
Let $b$ be a Busemann function associated to $\xi$ and $X=\nabla b$.
Then $|X|=1$ and $\diver X=h$ and hence
\begin{align*}
	h|D| = \int_D\diver X = \int_{\partial D}\langle X,\nu\rangle \le |\partial D|,
\end{align*}
where $\nu$ is the outer normal field of $D$ along $\partial D$.
The last assertion is just the Cheeger in\-equality.
\end{proof}

\begin{obs}\label{obs}
If a Hadamard manifold $M$ is asymptotically harmonic about a point $\xi\in M_\infty$
and $b$ is a Busemann function of $M$ centered at $\xi$, then
\begin{align*}
	0 \le \nabla^2b(v,v) \le h |v|^2.
\end{align*}
\end{obs}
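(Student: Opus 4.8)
The plan is to identify $\nabla^2b$ with the second fundamental form (shape operator) of the horospheres centered at $\xi$, and then combine two inputs: non-positivity of the curvature, which forces this shape operator to be positive semidefinite, and asymptotic harmonicity about $\xi$, which fixes its trace to be $h$.

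First I would set $X:=\nabla b$ and recall that, since $b$ is a $C^2$ distance function, $|X|\equiv1$ and the integral curves of $X$ are unit-speed geodesics, so $\nabla_XX=0$. Writing $\nabla^2b(v,w)=\langle\nabla_vX,w\rangle$, this gives $\nabla^2b(X,\cdot)=0$ and, by differentiating $|X|^2=1$, also $\nabla^2b(\cdot,X)=0$; hence $\nabla^2b$ is supported on the orthogonal complement $X^\perp$, where at each point $p$ it equals the shape operator $S$ of the horosphere through $p$, an $(m-1)$-dimensional hypersurface.

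For the lower bound, $\nabla^2b\ge0$ is the convexity of horoballs on a Hadamard manifold: geodesic balls are convex in non-positive curvature, horoballs are monotone limits of such balls, hence convex, so $b$ is a convex function --- equivalently, along the geodesics of $X$ the shape operator satisfies the Riccati equation $S'+S^2+R_X=0$ with $R_X\le0$ (here $R_X\colon v\mapsto R(v,X)X$), whose stable (bounded) solution is positive semidefinite; I would simply cite \cite{Ba95} for this. For the upper bound, asymptotic harmonicity about $\xi$ says precisely that $\tr S=\diver X=h$ on $X^\perp$. Since $S$ is a symmetric endomorphism of the $(m-1)$-dimensional space $X^\perp$ which is positive semidefinite with trace $h$, each of its eigenvalues lies in $[0,h]$; therefore, decomposing $v=v^\perp+\langle v,X\rangle X$, we get $0\le\nabla^2b(v,v)=\langle Sv^\perp,v^\perp\rangle\le h\,|v^\perp|^2\le h\,|v|^2$.

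There is no real obstacle here; the only points requiring care are the sign and normalization conventions --- that ``mean curvature $h$'' is the trace rather than the average of $S$, consistent with $\diver X=h$ in Observation~\ref{chee} --- and the regularity of $b$, namely that $b$ is genuinely $C^2$ on a Hadamard manifold so that $\nabla^2b$ and the Riccati comparison are legitimate, which is again guaranteed by $K_M\le0$; see \cite{Ba95}.
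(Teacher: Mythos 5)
Your argument is correct and is essentially the paper's own proof, which simply observes that $\nabla^2b\ge0$ together with $\tr\nabla^2b=h$ forces the eigenvalues of $\nabla^2b$ into $[0,h]$; your identification with the horosphere shape operator and the splitting off of the $\nabla b$-direction merely spell out the same two facts in more detail. No further comment is needed.
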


\begin{proof}
The claim follows immediately from $h=\tr\nabla^2b$ and $\nabla^2b\ge0$.
\end{proof}

Suppose that a Lie group $G$ acts properly, freely, and isometrically on a complete Riemannian manifold $M$.
Then the quotient $Q=G\backslash M$ is a smooth manifold.
Moreover, $Q$ inherits a~Riemannian metric such that the projection $\pi\colon M\to Q$ is a~Riemannian submersion.
The mean curvature field $H$ of the fibers is $\pi$-related to a vector field on $Q$,
which is then the push-forward~$\pi_*H$ of $H$.
Following~\cite{P21}, we define a Schr\"odinger operator on $Q$,
\begin{align}\label{pdef}
	S = \Delta + \frac14|\pi_*H|^2 - \frac12\diver\pi_*H.
\end{align}
For the determination of the spectrum $\sigma(M)$ of a homogeneous Hadamard manifold $M$,
we will then invoke~\cite[Theorem 1.3]{P21} in the following form.

\begin{Theorem}\label{p13}
Suppose that $G$ is amenable and that its component of the identity is unimodular.
Then
\begin{align*}
	\lambda_0(M) = \lambda_0(S) =:\lambda_0
	\qquad \text{and} \qquad
	\sigma(S)\subseteq\sigma(M),
\end{align*}
where $\lambda_0(M)$ and $\lambda_0(S)$ denote the bottom of the spectrum $\sigma(M)$ of $M$
and $\sigma(S)$ of $S$, respectively.
In particular, $\sigma(S)=[\lambda_0,\infty)$ implies that $\sigma(M)=[\lambda_0,\infty)$.
\end{Theorem}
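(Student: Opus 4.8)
Here is a plan for proving Theorem~\ref{p13}.

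The plan is to establish the two assertions by different mechanisms. The inequality $\lambda_0(S)\le\lambda_0(M)$ will come from averaging test functions on $M$ over the fibres of $\pi$ and uses no hypothesis on $G$; the reverse inequality, and the inclusion $\sigma(S)\subseteq\sigma(M)$, will come from the opposite operation -- lifting approximate eigenfunctions from $Q$ to $M$ and filling in the fibre directions with almost harmonic functions on $G$ -- and it is the production of the latter that uses amenability and unimodularity. Throughout I would use the following structure. Since $G$ acts properly, freely and isometrically, $\pi\colon M\to Q$ is a principal $G$-bundle and each fibre, with the induced metric, is isometric to $G$ equipped with a left-invariant metric $g_q$ depending on $q\in Q$. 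Fixing a Haar measure on $G$, let $\theta\colon Q\to(0,\infty)$ be the density of the fibre volume relative to Haar measure; this is well defined because the identity component of $G$ is unimodular. The first variation of area gives $\nabla\log\theta=-\pi_*H$, so the drift Laplacian $L:=\Delta_Q+\langle\nabla\,\cdot\,,\pi_*H\rangle$ is the Laplacian of the weighted manifold $(Q,\theta\,\d V_Q)$, which is unitarily equivalent, via $f\mapsto\theta^{1/2}f$, to $S$ on $L^2(Q)$; and for a Riemannian submersion $\Delta_M(u\circ\pi)=(Lu)\circ\pi$ for $u\in C^\infty(Q)$. Finally $S=P^*P$ with $Pf=\nabla f+\frac12 f\,\pi_*H$, so $S\ge0$ and $\lambda_0(S)=\inf_{f\neq0}\|Pf\|^2/\|f\|^2$.

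First I would prove $\lambda_0(S)\le\lambda_0(M)$. Given $\phi\in C^\infty_{\rm c}(M)$, put $\psi(q):=\bigl(\int_{F_q}\phi^2\bigr)^{1/2}$, a compactly supported Lipschitz function on $Q$ with $\|\psi\|^2=\|\phi\|^2$ by Fubini for the submersion. Differentiating under the integral sign and using the first variation of area one gets, for each $\bar X\in T_qQ$,
\begin{align*}
\psi\,\langle P\psi,\bar X\rangle=\int_{F_q}\phi\,\langle\nabla^{\rm h}\phi,\bar X^{\rm h}\rangle,
\end{align*}
where $\nabla^{\rm h}$ is the horizontal gradient and $\bar X^{\rm h}$ the horizontal lift; Cauchy--Schwarz on $F_q$ then gives $|P\psi|^2\le\int_{F_q}|\nabla^{\rm h}\phi|^2$ pointwise, and integrating over $Q$ yields $\|P\psi\|^2\le\|\nabla\phi\|^2$. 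Hence $\lambda_0(S)\le\|\nabla\phi\|^2/\|\phi\|^2$ for all $\phi\neq0$, which is the claim.

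Next I would prove $\sigma(S)\subseteq\sigma(M)$. The hypotheses on $G$ enter through the fact that a left-invariant metric $g_0$ on $G$ has $\lambda_0(\Delta_G)=0$, so there is a Weyl sequence $\chi_j\in C^\infty_{\rm c}(G)$ with $\|\chi_j\|_2=1$ and $\|\Delta_G\chi_j\|_2\to0$; then $\|\nabla\chi_j\|_2\to0$ and, by the Bochner formula and boundedness of $\Ric$ on $(G,g_0)$, also $\|\Hess\chi_j\|_2\to0$. Given $\lambda\in\sigma(S)$ and $\varepsilon>0$, choose $f\in C^\infty_{\rm c}(Q)$ with $\|(S-\lambda)f\|\le\varepsilon\|f\|$ (possible since $C^\infty_{\rm c}(Q)$ is a core for $S$). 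Over a relatively compact trivialising neighbourhood of a point of $\supp f$, and summing over a partition of unity on $Q$, transplant $\chi_j$ to the fibres to get $\tilde\chi_j$, normalised so that $\int_{F_q}\tilde\chi_j^2=\theta(q)$, and set $\phi_j:=\bigl((\theta^{-1/2}f)\circ\pi\bigr)\tilde\chi_j$. Then $\|\phi_j\|^2=\|f\|^2$, and by the submersion identity together with $L\theta^{-1/2}=\theta^{-1/2}S$,
\begin{align*}
(\Delta_M-\lambda)\phi_j=\tilde\chi_j\bigl((\theta^{-1/2}(S-\lambda)f)\circ\pi\bigr)+\bigl((\theta^{-1/2}f)\circ\pi\bigr)\Delta_M\tilde\chi_j-2\bigl\langle\nabla\bigl((\theta^{-1/2}f)\circ\pi\bigr),\nabla\tilde\chi_j\bigr\rangle.
\end{align*}
The first term has $L^2(M)$-norm $\|(S-\lambda)f\|\le\varepsilon\|\phi_j\|$; the other two, after bounding the connection form of the bundle, the mean curvature, the O'Neill tensors and the difference $g_q-g_0$ over the compact base support, are dominated by fixed multiples of $\|\nabla\chi_j\|_2+\|\Hess\chi_j\|_2$ and hence tend to $0$ as $j\to\infty$. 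For $j$ large, $\phi_j\neq0$ and $\|(\Delta_M-\lambda)\phi_j\|\le2\varepsilon\|\phi_j\|$; as $\varepsilon$ was arbitrary, $\lambda\in\sigma(M)$. Taking $\lambda=\lambda_0(S)$ gives $\lambda_0(M)\le\lambda_0(S)$, so with the previous step $\lambda_0(M)=\lambda_0(S)$; the last assertion follows since $\sigma(M)\subseteq[\lambda_0,\infty)$.

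I expect the main obstacle to be the error estimate in the last step. The cutoffs $\tilde\chi_j$ are modelled on the fixed metric $g_0$, whereas each fibre carries a different left-invariant metric and sits in $M$ with nonvanishing second fundamental form and O'Neill tensors, so $\Delta_M\tilde\chi_j$ differs from the small quantity $\Delta_{g_0}\chi_j$ not only by terms first order in $\chi_j$ (from $H$, the connection, and the O'Neill tensors, with coefficients bounded on the base support) but also by the genuinely second-order term $(\Delta_{g_q}-\Delta_{g_0})\chi_j$. Since the metric difference is bounded but does not go to $0$, killing this term is precisely what forces the upgrade $\|\Hess\chi_j\|_2\to0$ of the F\o lner/Weyl sequence via Bochner, rather than merely $\|\nabla\chi_j\|_2\to0$. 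The remaining points -- globalising the construction over $\supp f$ by a partition of unity, and verifying that $C^\infty_{\rm c}(Q)$ is a core for $S$ -- are routine given completeness of $Q$.
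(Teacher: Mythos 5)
First, a point of reference: the paper does not prove Theorem~\ref{p13} at all -- it is quoted from \cite[Theorem~1.3]{P21} -- so the comparison is with that reference rather than with an argument in this text. Your overall route is the natural reconstruction of that argument and is sound in structure: the inequality $\lambda_0(S)\le\lambda_0(M)$ by averaging test functions over the fibres (your identity $\psi\langle P\psi,\bar X\rangle=\int_{F_q}\phi\langle\nabla^{\rm h}\phi,\bar X^{\rm h}\rangle$ is correct, and $\|Pf\|_2^2$ is indeed the quadratic form of the operator $S$ in \eqref{pdef}), and the inclusion $\sigma(S)\subseteq\sigma(M)$ by lifting approximate eigenfunctions of $S$ and multiplying by fibrewise cut-offs, whose existence comes from the fact that amenability together with unimodularity forces $\lambda_0(G,g_0)=0$; the upgrade from $\|\nabla\chi_j\|_2\to0$ to $\|\Hess\chi_j\|_2\to0$ via Bochner and bounded Ricci curvature of the left-invariant metric is a sensible way to absorb the second-order discrepancy $\Delta_{g_q}-\Delta_{g_0}$ over the compact base support. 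The hypotheses enter exactly where they must.

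The genuine gap is the globalization of $\tilde\chi_j$. Your estimates for $\Delta_M\tilde\chi_j$, $\nabla\tilde\chi_j$ and the exact normalization $\int_{F_q}\tilde\chi_j^2=\theta(q)$ are established only within a single trivializing chart; the principal bundle over a neighbourhood of $\supp f$ need not be trivial, and once you patch with a partition of unity the transplants from overlapping charts differ by $q$-dependent right translations $R_{a(q)}$ of $\chi_j$. Then $\|\phi_j\|^2=\|f\|^2$ is no longer available as stated, and the error terms involve $\chi_j-R_{a(q)}^*\chi_j$ together with its first and second fibre derivatives and with $q$-derivatives through $a(q)$; making these small requires estimates such as $\|R_a^*\chi_j-\chi_j\|_{L^2}\le C(a)\|\nabla\chi_j\|_{L^2}$, uniformly for $a$ in compacta, which again uses unimodularity (right translations distort Haar measure by the modular function) and appears nowhere in your plan. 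This is repairable (take $\chi_j\ge0$, renormalize fibrewise, and control the extra horizontal derivatives of the fibrewise norm), but as written the decisive step is only proved locally. Two smaller unaddressed points: the hypothesis is only that the identity component of $G$ is unimodular, whereas your definition of $\theta$, the relation $\nabla\log\theta=-\pi_*H$, and the assertion $\lambda_0(G,g_0)=0$ are argued as if $G$ were connected (or globally unimodular); and the claim that $C^\infty_{\rm c}(Q)$ is a core for $S$ needs justification -- it does follow from $S\ge0$ on $C^\infty_{\rm c}(Q)$, completeness of $Q$ and essential self-adjointness results for semibounded Schr\"odinger operators, but the potential in \eqref{pdef} need not be bounded, so this should be said.
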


For a boundary condition $B$ of a domain $D$ in a Riemannian manifold $M$,
denote by $C^\infty_{c,B}(\bar{D})$ the space of smooth functions on $\bar D$ with compact support
which satisfy $B$ along the boundary~$\partial D$ of $D$.

\begin{Definition}%\label{bcg}
We say that a boundary condition $B$ for a smooth domain $D$ is \emph{non-positive}
if~$u\in C^\infty_{c,B}(\bar D)$ implies $2u\nabla u=\nabla_\nu u^2\le0$ along $\partial D$.
\end{Definition}

\begin{exas}
Dirichlet and Neumann boundary conditions are non-positive.
Robin boundary conditions $\alpha u+\beta\nabla_\nu u=0$ are non-positive if $\alpha\beta>0$.
These kinds of boundary conditions are also self-adjoint and elliptic in the usual sense.
\end{exas}

%%%%%%%%%%%%%%%%%%%%%%%%%%%%%%%%%%%%%%%%%%%%%%
\section{Vanishing of point spectrum}\label{secrell}
%%%%%%%%%%%%%%%%%%%%%%%%%%%%%%%%%%%%%%%%%%%%%%

For a $C^1$ vector field $X$ and a $C^2$ function $u$ on a Riemannian manifold $M$, we have the following identity
\begin{align}
 2\langle\nabla_{\nabla u}X,\nabla u\rangle
 = 2\langle X,\nabla u\rangle\Delta u + |\nabla u|^2\diver X + \diver\big\{2X(u)\nabla u-|\nabla u|^2X\big\}. \label{rell}
\end{align}
In the context of spectral theory, this identity was used by Donnelly--Garofalo,
see~\cite[Lemma~2.1]{DoGa92}, \cite[Proposition 3.1]{DoGa97}, and Xavier, see~\cite[identity~(1)]{Xavier1988},
but it also occurs in the earlier work of Kazdan--Warner~\cite[identity~(8.1)]{KazdanWarner74c}.
Donnelly and Garofalo attribute \eqref{rell} to Rellich~\cite{Rellich43} in the case of Euclidean spaces;
cf.~(3) respectively II on page 62 of~\cite{Rellich43}.
For the convenience of the reader, we give a~short proof of the identity.

\begin{proof}[Proof of \eqref{rell}]
Since $X(u)=\langle\nabla u, X\rangle$ and $|\nabla u|^2=\langle\nabla u,\nabla u\rangle=\nabla u(u)$, we have
\begin{align*}
 2\diver(X(u)\nabla u)
 &= 2\nabla u(X(u)) + 2X(u)\diver\nabla u \\
 &= 2X(\nabla u(u)) + 2[\nabla u,X](u) - 2X(u)\Delta u \\
 &= 2X\bigl(|\nabla u|^2\bigr) + 2(\nabla_{\nabla u}X)(u) - 2(\nabla_X\nabla u)(u) - 2X(u)\Delta u \\
 &= 2X\bigl(|\nabla u|^2\bigr) + 2\langle\nabla_{\nabla u}X,\nabla u\rangle - 2\langle\nabla_X\nabla u,\nabla u\rangle - 2X(u)\Delta u \\
 &= X\bigl(|\nabla u|^2\bigr) + 2\langle\nabla_{\nabla u}X,\nabla u\rangle - 2X(u)\Delta u \\
 &= \diver\bigl(|\nabla u|^2X\bigr) - |\nabla u|^2\diver X + 2\langle\nabla_{\nabla u}X,\nabla u\rangle - 2X(u)\Delta u,
\end{align*}
which is the assertion.
\end{proof}

\begin{Corollary}%\label{codoga}
If $\Delta u=\varphi u$ for a $C^1$ function $\varphi$ on $M$, then
\begin{gather}
 2\langle\nabla_{\nabla u}X,\nabla u\rangle
= \varphi X(u^2) + |\nabla u|^2\diver X + \diver\big\{2X(u)\nabla u-|\nabla u|^2X\big\} \label{doga2} \\
\hphantom{2\langle\nabla_{\nabla u}X,\nabla u\rangle}{}= \bigl(|\nabla u|^2 - \varphi u^2\bigr)\diver X - X(\varphi)u^2 \notag\\
 \hphantom{2\langle\nabla_{\nabla u}X,\nabla u\rangle=}{}+ \diver\big\{2X(u)\nabla u+\bigl(\varphi u^2-|\nabla u|^2\bigr)X\big\}. \label{doga3}
\end{gather}
\end{Corollary}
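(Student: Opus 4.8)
The plan is to derive both identities directly from \eqref{rell} by specializing to a function $u$ with $\Delta u = \varphi u$, so no new computation beyond \eqref{rell} is really needed. For \eqref{doga2}, the only observation required is that the first term on the right-hand side of \eqref{rell} becomes
\[
2\langle X,\nabla u\rangle\Delta u = 2\varphi u\, X(u) = \varphi\, X(u^2),
\]
using $\langle X,\nabla u\rangle = X(u)$ and $X(u^2) = 2u\,X(u)$. Substituting this into \eqref{rell} gives \eqref{doga2} verbatim.

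For \eqref{doga3}, the idea is to move the factor $\varphi$ inside the divergence, at the cost of a lower-order term involving $X(\varphi)$. First I would apply the Leibniz rule
\[
\varphi\, X(u^2) = X(\varphi u^2) - u^2\, X(\varphi),
\]
and then rewrite $X(\varphi u^2)$ via the general identity $X(f) = \diver(fX) - f\diver X$ with $f = \varphi u^2$, obtaining
\[
\varphi\, X(u^2) = \diver\bigl(\varphi u^2 X\bigr) - \varphi u^2\diver X - u^2\, X(\varphi).
\]
Plugging this back into \eqref{doga2}, I would then collect the two terms proportional to $\diver X$ into $\bigl(|\nabla u|^2 - \varphi u^2\bigr)\diver X$ and absorb $\diver(\varphi u^2 X)$ into the existing divergence term $\diver\{2X(u)\nabla u - |\nabla u|^2X\}$, which produces exactly the right-hand side of \eqref{doga3}.

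There is essentially no obstacle here: the argument is pure bookkeeping once \eqref{rell} is in hand, and the only points requiring a little care are the signs and the grouping of the divergence terms when passing from \eqref{doga2} to \eqref{doga3}. It is worth noting, however, that the hypothesis that $\varphi$ is only assumed to be $C^1$ (rather than constant) is precisely what makes the term $X(\varphi)u^2$ in \eqref{doga3} necessary; when $\varphi$ is constant this term vanishes and \eqref{doga3} reduces to the form classically used for honest eigenfunctions.
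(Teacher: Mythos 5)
Your proposal is correct and follows essentially the same route as the paper: substitute $2\langle X,\nabla u\rangle\Delta u=\varphi X\bigl(u^2\bigr)$ into \eqref{rell} to get \eqref{doga2}, then rewrite $\varphi X\bigl(u^2\bigr)=\diver\bigl(\varphi u^2X\bigr)-\varphi u^2\diver X-X(\varphi)u^2$ and regroup to get \eqref{doga3}. The bookkeeping of signs and divergence terms is exactly as in the paper's proof, so nothing is missing.
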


\begin{proof}
If $\Delta u=\varphi u$, then
\begin{align*}
 2X(u)\Delta u &= 2\varphi X(u)u = \varphi X\bigl(u^2\bigr) = X\bigl(\varphi u^2\bigr) - X(\varphi)u^2 \\
 &= \diver\bigl(\varphi u^2X\bigr) - \varphi u^2\diver X - X(\varphi)u^2.
\end{align*}
Together with \eqref{rell}, this gives \eqref{doga2} and \eqref{doga3}.
\end{proof}

In~\cite{DoGa97}, Donnelly and Garofalo consider eigenfunctions $Lu=\lambda u$ of Schr\"odinger operators $L=\Delta+V$.
This corresponds to $\varphi=\lambda-V$ in \eqref{doga2} and \eqref{doga3}.

To exhibit the strength of \eqref{rell}, we present immediate applications of the above identities.
With somewhat more elaborate techniques, we will obtain stronger results later.
Our results are reminiscent of~\cite[Satz 2]{Rellich43}.

\begin{Theorem}\label{nopointd}
Suppose that $K_M<0$ and that $M$ is asymptotically harmonic about a point $\xi\in M_\infty$.
Let $D$ be the complement of a horoball in $M$ with center $\xi$.
Then the point spectrum $H_{B,\p}=H_{B,\p}(\Delta, \bar{D} )$ vanishes for any self-adjoint elliptic
boundary condition $B$ for $D$, which is non-positive.
\end{Theorem}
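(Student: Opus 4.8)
The plan is to apply identity \eqref{doga3} with the vector field $X=\nabla b$, where $b$ is the Busemann function centered at $\xi$, and integrate over $\bar D$ against an eigenfunction $u\in C^\infty_{c,B}(\bar D)$ with $\Delta u=\lambda u$ (so $\varphi=\lambda$ is constant and $X(\varphi)u^2=0$). Since $X=\nabla b$ is the gradient of a distance function, $\langle\nabla_{\nabla u}X,\nabla u\rangle=\nabla^2 b(\nabla u,\nabla u)\ge 0$ by convexity of $b$ (Observation~\ref{obs}), while $\diver X=\tr\nabla^2 b=h>0$ is constant. The goal is to show that integrating \eqref{doga3} forces $\nabla u\equiv 0$, hence $u\equiv 0$ since $u$ has compact support, so that no $L^2$-eigenfunctions exist and $H_{B,\p}=\{0\}$.

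First I would compute $\int_{\bar D}$ of both sides of \eqref{doga3}. The left side is $\int_{\bar D}2\nabla^2 b(\nabla u,\nabla u)\ge 0$. On the right side the divergence term integrates, by the divergence theorem on $\bar D$, to the boundary integral $\int_{\partial D}\big\{2X(u)\langle\nabla u,\nu\rangle+(\lambda u^2-|\nabla u|^2)\langle X,\nu\rangle\big\}$. Here is where the two hypotheses on $B$ and on $D$ enter: along $\partial D$ (a horosphere centered at $\xi$), the outward normal $\nu$ of $D$ points into the excised horoball, i.e.\ $\nu=-X=-\nabla b$, so $\langle X,\nu\rangle=-1$; and non-positivity of $B$ gives $2u\nabla_\nu u=\nabla_\nu u^2\le 0$ along $\partial D$, i.e.\ $2X(u)u\ge 0$ there (since $X=-\nu$). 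I would then combine these: the term $(\lambda u^2-|\nabla u|^2)\langle X,\nu\rangle=-(\lambda u^2-|\nabla u|^2)=|\nabla u|^2-\lambda u^2$, and the term $2X(u)\langle\nabla u,\nu\rangle=-2X(u)\nabla_\nu u$. Writing the normal and tangential parts of $\nabla u$ along $\partial D$ as $\nabla u=(\nabla_\nu u)\nu+\nabla^{\mathrm T}u$, one has $X(u)=\langle\nabla u,-\nu\rangle=-\nabla_\nu u$, so $2X(u)\nabla_\nu u=-2(\nabla_\nu u)^2$, and $|\nabla u|^2=(\nabla_\nu u)^2+|\nabla^{\mathrm T}u|^2$. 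Altogether the boundary integrand is $-2(\nabla_\nu u)^2+(\nabla_\nu u)^2+|\nabla^{\mathrm T}u|^2-\lambda u^2=|\nabla^{\mathrm T}u|^2-(\nabla_\nu u)^2-\lambda u^2$; I would need a further estimate to bound this from above, which is the delicate point (see below).

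The remaining bulk term on the right side of \eqref{doga3} is $\int_{\bar D}(|\nabla u|^2-\lambda u^2)\diver X=h\int_{\bar D}(|\nabla u|^2-\lambda u^2)$, and by integration by parts (Green's identity, using $\Delta u=\lambda u$ and the boundary condition) $\int_{\bar D}|\nabla u|^2=\lambda\int_{\bar D}u^2+\int_{\partial D}u\nabla_\nu u$; since $u\nabla_\nu u\le 0$ by non-positivity of $B$, this gives $\int_{\bar D}(|\nabla u|^2-\lambda u^2)\le 0$, so $h\int_{\bar D}(|\nabla u|^2-\lambda u^2)\le 0$. Assembling everything: $0\le\int_{\bar D}2\nabla^2 b(\nabla u,\nabla u)=h\int_{\bar D}(|\nabla u|^2-\lambda u^2)+\int_{\partial D}(\text{boundary integrand})$, where the first term is $\le 0$. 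The main obstacle is controlling the boundary integrand $\int_{\partial D}(|\nabla^{\mathrm T}u|^2-(\nabla_\nu u)^2-\lambda u^2)$: the tangential-gradient piece $|\nabla^{\mathrm T}u|^2$ is the wrong sign. To handle it I expect one must use the strict negativity $K_M<0$, which makes the second fundamental form of the horosphere $\partial D$ \emph{strictly} positive definite (genuinely, not just $\ge 0$), together with a tangential integration by parts on the horosphere and the eigenvalue equation restricted there, to absorb $\int_{\partial D}|\nabla^{\mathrm T}u|^2$ into the favorable terms — or, more likely in the spirit of Xavier's argument, one replaces $X=\nabla b$ by $X=f(b)\nabla b$ for a suitable convex function $f$ with $f'\ge 0$, chosen so that the boundary contribution acquires a sign and the bulk terms remain controlled. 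Once the boundary integrand is shown to be $\le 0$ (or once a suitable choice of $f$ makes the whole right side $\le 0$ while the left side stays $\ge 0$), we conclude $\nabla^2 b(\nabla u,\nabla u)\equiv 0$; since $K_M<0$ forces $\nabla^2 b>0$, this yields $\nabla u\equiv 0$, hence $u\equiv 0$.
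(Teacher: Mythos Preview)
Your overall strategy---apply \eqref{doga3} to an eigenfunction and integrate---is the right one, and you even anticipate that one may need $X=f(b)\nabla b$ rather than $X=\nabla b$. But two concrete errors keep the argument from closing. First, an $L^2$-eigenfunction on the unbounded domain $D$ is \emph{not} in $C^\infty_{c,B}(\bar D)$; you must work with a genuine square-integrable eigenfunction and justify the integration by parts at infinity. Second, and more seriously, your final step is wrong: $\nabla^2 b$ is \emph{never} positive definite, because $b$ is a distance function and hence $\nabla^2 b(\nabla b,\cdot)=0$. So from $\nabla^2 b(\nabla u,\nabla u)=0$ you would only obtain that $\nabla u$ is parallel to $\nabla b$, i.e.\ that $u$ is constant along horospheres---not that $\nabla u=0$. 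Meanwhile the problematic boundary term $|\nabla^{\mathrm T}u|^2-(\nabla_\nu u)^2-\lambda u^2$ remains uncontrolled, and tangential integration by parts on $\partial D$ will not repair it.

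The paper fixes all three issues with a single explicit choice,
\[
X=\bigl(1-e^{-hb}\bigr)\nabla b,
\]
where $b$ is normalized so that $D=b^{-1}((0,\infty))$. Then (i) $X$ vanishes on $\partial D=b^{-1}(0)$, so the divergence term in \eqref{doga3} contributes no boundary integral at all; (ii) one still has $\diver X=h$ constant, so the bulk term is $h\int_D(|\nabla u|^2-\lambda u^2)=h\int_{\partial D}u\nabla_\nu u\le0$ by non-positivity of $B$; and (iii) $\nabla X=he^{-hb}\,\nabla b\otimes\nabla b+(1-e^{-hb})\nabla^2 b$ is now genuinely positive definite on $D$, since the first summand fills in the $\nabla b$-direction and the second is positive definite perpendicular to it (here $K_M<0$ and $1-e^{-hb}>0$ on $D$). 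Observation~\ref{obs} makes $\nabla X$ bounded, which together with $X|_{\partial D}=0$ justifies integrating \eqref{doga3} against a square-integrable eigenfunction. The upshot is $\int_D\langle\nabla_{\nabla u}X,\nabla u\rangle\le0$ with a strictly positive integrand, hence $\nabla u=0$, hence $u$ is constant, hence $u=0$ since $|D|=\infty$.
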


\begin{proof}
Let $b$ be the Busemann function associated to $\xi$ such that $D =b^{-1}((0,\infty))$.
By assumption, there is a number $h$, because of negative curvature strictly positive,
such that the mean curvature of the horospheres $b^{-1}(r)$ is constant, equal to $h$.
In other words, $\diver\nabla b=h$.
Let
\begin{align}\label{nopointd2}
X = \bigl(1 - {\rm e}^{-hb}\bigr)\nabla b.
\end{align}
Then
\begin{align*}%\label{nopointd4}
 \diver X=h \qquad\text{and}\qquad
 \nabla X = h{\rm e}^{-hb} \nabla b\otimes\nabla b + \bigl(1 - {\rm e}^{-hb}\bigr)\nabla^2b.
\end{align*}
Now $\nabla^2b$ vanishes in the direction of $\nabla b$ and is positive definite perpendicular to it.
Since $1-{\rm e}^{-hb}>0$ on $D$, we conclude that $\nabla X$ is positive definite on $D$.
By Observation~\ref{obs}, $\nabla^2b$ is bounded on $M$, hence $\nabla X$ on $D$.

Let $u$ be a square-integrable smooth function on $D $ with $\Delta u=\lambda u$,
satisfying the boundary condition $B$.
Since $X$ vanishes on $\partial D=b^{-1}(0)$ and $\nabla X$ is bounded on $D$,
integration of \eqref{doga3}, with $\vf=\lambda$, is justified and implies that
\begin{align}
 2\langle\nabla_{\nabla u}X,\nabla u\rangle_2
 &= h\int_D \bigl(|\nabla u|^2 - \lambda u^2\bigr) \nonumber\\
 &= h\int_D \bigl(u\Delta u - \lambda u^2\bigr) + h\int_{\partial D }u\nabla_\nu u \nonumber\\
 &= h\int_{\partial D }u\nabla_\nu u \le 0,\label{nopointd6}
\end{align}
where the inequality is implied by the boundary condition $B$.
Now $\nabla X$ is positive definite on $D $, therefore $\nabla u$ vanishes, and hence $u$ is constant.
On the other hand, the volume of $D $ is infinite and $u$ is square-integrable, thus $u$ vanishes.
\end{proof}

Restricting to Dirichlet or Neumann boundary conditions, we can allow for more general kinds of domains.
Let $M$ be a Hadamard manifold.\
Say that a subset $D\subseteq M$ is a \emph{shadow} if there is a horosphere $H\subseteq M$
and a subset $C\subseteq H$ such that $D$ is the set of all points $x\in M$
such that the ray from $x$ to the center $\xi$ of $H$ passes through $C$.
Then we also say that $D$ is the \emph{shadow of $C$}, thinking of $\xi$ as a source of light.
The exterior of a horoball considered in Theorem~\ref{nopointd} above corresponds to $C=H$.

\begin{Theorem}\label{nopointd8}
Suppose that $K_M<0$ and that $M$ is asymptotically harmonic about a point $\xi\in M_\infty$.
Let $D\subseteq M$ be the shadow of a smooth domain $C$ in a horosphere $H$ in $M$ with center~$\xi$.
Then the Dirichlet and Neumann point spectra of $D$ vanish.
\end{Theorem}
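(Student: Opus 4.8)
The plan is to adapt the argument of Theorem~\ref{nopointd}, replacing the exterior of a horoball by the shadow $D$ of a smooth domain $C \subseteq H$, and arranging that the boundary term in the integrated Rellich identity \eqref{doga3} has a favourable sign. As before, let $b$ be the Busemann function with center $\xi$, normalized so that $H = b^{-1}(0)$, and set
\begin{align*}
X = \bigl(1 - \mathrm{e}^{-hb}\bigr)\nabla b,
\end{align*}
so that $\diver X = h$ and $\nabla X$ is positive semidefinite, bounded on $M$, and positive definite wherever $b \neq 0$. The key geometric observation is that the boundary $\partial D$ of the shadow splits into two pieces: the ``cap'' $C$ itself, lying in $H = b^{-1}(0)$, where $X$ vanishes; and the ``lateral'' part $\partial D \setminus C$, which is a union of geodesic rays to $\xi$, i.e.\ integral curves of $\nabla b$. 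Along the lateral boundary the outer normal $\nu$ is perpendicular to $\nabla b$, hence $\langle X, \nu\rangle = 0$ there as well. Therefore $X$ is tangent to $\partial D$ along \emph{all} of $\partial D$, and the divergence term $\diver\{\,\cdot\,X\}$ in \eqref{doga3} contributes nothing upon integration.

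First I would verify the regularity and integrability needed to justify integrating \eqref{doga3} over $D$: the cap $C$ is smooth by hypothesis, and the lateral boundary is ruled by geodesics, so $\partial D$ is piecewise smooth; $\nabla X$ is bounded on $D$ by Observation~\ref{obs}; and $u$ is square-integrable with $\Delta u = \lambda u$. Integrating \eqref{doga3} with $\varphi = \lambda$ then yields, exactly as in \eqref{nopointd6},
\begin{align*}
2\langle\nabla_{\nabla u}X,\nabla u\rangle_2
 = h\int_D \bigl(|\nabla u|^2 - \lambda u^2\bigr) - \int_{\partial D}\langle X,\nu\rangle\bigl(|\nabla u|^2 - \lambda u^2\bigr) + \int_{\partial D}2\nabla_\nu u\, X(u).
\end{align*}
The first boundary integral vanishes since $\langle X,\nu\rangle = 0$ on $\partial D$. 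For the second, on the cap $C$ we have $X = 0$, and on the lateral boundary $X(u) = (1 - \mathrm{e}^{-hb})\langle\nabla b, \nabla u\rangle$, but under Dirichlet conditions $u \equiv 0$ on $\partial D$ forces $\nabla u$ to be normal to $\partial D$, hence $\langle\nabla b,\nabla u\rangle$ — a tangential derivative along the lateral boundary — vanishes; under Neumann conditions $\nabla_\nu u = 0$ directly. Either way the last integral vanishes, so $2\langle\nabla_{\nabla u}X,\nabla u\rangle_2 = h\int_D (u\Delta u - \lambda u^2) + h\int_{\partial D} u\,\nabla_\nu u = h\int_{\partial D} u\,\nabla_\nu u = 0$, the last equality again by either boundary condition.

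Since $\nabla X$ is positive definite on the interior of $D$ (where $b > 0$), the vanishing of $\langle\nabla_{\nabla u}X,\nabla u\rangle_2$ forces $\nabla u \equiv 0$, so $u$ is constant; as the shadow $D$ of any nonempty open $C$ has infinite volume (it contains full geodesic rays to $\xi$ and opens up, so its volume is infinite much as for the exterior of a horoball), square-integrability gives $u \equiv 0$. The main obstacle I anticipate is the boundary analysis: one must check carefully that the lateral boundary is genuinely tangent to $\nabla b$ (this uses that rays to a point at infinity are integral curves of the Busemann gradient, a standard fact from~\cite{Ba95}) and that the corner where $C$ meets the lateral boundary does not spoil the integration by parts — this should follow because $X$ vanishes at the corner (it lies in $H$), so no concentrated boundary contribution arises; a limiting argument exhausting $D$ by truncations $D \cap b^{-1}([0,R])$ and controlling the flux through $b^{-1}(R)$ using boundedness of $\nabla X$ and $u \in L^2$ makes this rigorous.
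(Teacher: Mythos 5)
Your proposal is correct and follows essentially the same route as the paper's proof: the same vector field $X=\bigl(1-{\rm e}^{-hb}\bigr)\nabla b$, the same splitting of $\partial D$ into the cap $C$ (where $X$ vanishes) and the lateral shadow of $\partial C$ (where $X$ is tangential, so $\langle X,\nu\rangle=0$ and the remaining term dies under either Dirichlet or Neumann conditions), leading to $\langle\nabla_{\nabla u}X,\nabla u\rangle_2=h\int_{\partial D}u\nabla_\nu u=0$ and hence $u=0$ by positive definiteness of $\nabla X$ on $D\setminus C$ and infinite volume. Your remark on the corner along $\partial C$ matches the paper's observation that this singular set has codimension two, so the divergence formula still applies.
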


\begin{proof}
Let $b$ be the Busemann function on $M$ associated to $\xi$ which vanishes on $H = b^{-1}(0)$
and $X$ be the vector field as in \eqref{nopointd2}.
As in the previous proof, we get that $\nabla X$ is positive definite on $b^{-1}((0,\infty))$,
and hence, in particular, on $D \setminus C$.

Let $u$ be a square-integrable smooth function on $D $ with $\Delta u=\lambda u$,
satisfying the Dirichlet or the Neumann boundary condition.
Now the boundary of $D$ consists of the smooth domain $C\subseteq H$
and the shadow of $\partial C$, where $X$ is tangential to it.
Since $C$ is smooth, $\partial C$ is the singular part of $\partial D$.
Now the codimension of $\partial C$ in $M$ is two,
and hence we can apply the divergence formula when integrating \eqref{doga2}.
Since $X$ vanishes on $H$,
the contribution of $C$ vanishes in the integrated version of \eqref{doga3}.
As for the shadow part, the contribution of the first boundary term $2X(u)\nabla_\nu u$ vanishes
since then $X(u)=0$ in the case of the Dirichlet boundary condition
and $\nabla_\nu u=0$ in the case of the Neumann boundary condition.
The contribution of the second term, $\bigl(\lambda u^2-|\nabla u|^2\bigr)X$,
always vanishes since $\langle X,\nu\rangle=0$ in the shadow part.
Thus repeating the computation in \eqref{nopointd6}, we get
\begin{align*}
	\langle\nabla_{\nabla u}X,\nabla u\rangle_2 = h\int_{\partial D }u\nabla_\nu u = 0.
\end{align*}
Since $\nabla X$ is positive definite on $D \setminus C$, we conclude, as above, that $u=0$.
\end{proof}

%%%%%%%%%%%%%%%%%%%%%%%%%%%%%%%%%%%%%%%%%%%%%%
\section{Absolutely continuous spectrum}%\label{secxavier}
%%%%%%%%%%%%%%%%%%%%%%%%%%%%%%%%%%%%%%%%%%%%%%

Let $S$ be a self-adjoint operator in a separable Hilbert space $H$,
and denote by $R(z)=(S-z)^{-1}$ the resolvent of $S$.
Say that a closed operator $T$ in $H$ is \emph{$S$-smooth} if, for each $x\in H$ and $\ve\ne0$,
$R(\lambda+{\rm i}\ve)x\in\dom T$
 for almost all $\lambda\in\R$ and
\begin{align*}%\label{Kato1}
 \sup_{|x|=1,\,\ve\ne0} \int_{-\infty}^\infty |TR(\lambda+{\rm i}\ve)x|^2 < \infty,
\end{align*}
see~\cite[p.~142]{ReedSimon78}.
In our context, the point is that then the image
\begin{align}\label{Kato3}
 \ran T^* \subseteq H_{\ac}(S),
\end{align}
see~\cite[Theorem XIII.23]{ReedSimon78}.
For a Borel subset $B\subseteq\R$,
say that a closed operator $T$ in $H$ is \emph{$S$-smooth on $B$} if $TE_B$ is $H$-smooth,
where $E_B$ denotes the spectral projection of $S$ associated to $B$.
Then
\begin{align*}%\label{Kato5}
 \ran E_BT^* \subseteq H_{\ac}(S),
\end{align*}
by \eqref{Kato3}.
By~\cite[Theorem XIII.30]{ReedSimon78}, $T$ is $S$-smooth on the closure $\bar B$ of $B\subseteq\R$ if
\begin{align}\label{Kato7}
 \dom T\supseteq\dom S
 \qquad\text{and}\qquad
 \sup_{\substack{|x|=1, \, \lambda\in B,\\ 0<|\ve|<1}} |\ve||TR(\lambda+{\rm i}\ve)x| < \infty.
\end{align}
Xavier~\cite[p.~581f.]{Xavier1988} shows that the latter criterion is satisfied
if $B$ is bounded and if there is a~constant $C$ such that
\begin{align}\label{Kato9}
 |Tx|^2 \le C|(S-\lambda)x|(|Sx|+|x|)
\end{align}
for all $x$ in a core of $S$ in $H$ and all $\lambda\in B$.
In fact, he shows that the term in \eqref{Kato7} is then bounded by $C(1+|\lambda|+|\ve|)$,
which explains why $B$ is assumed to be bounded; see~\cite[top of~p.~582]{Xavier1988}.

Guided by~\cite[Section 3]{Xavier1988}, we return to the geometric situation in Section~\ref{secrell}
and let $X$ be a~$C^1$ vector field and $u$ be a $C^2$ function on a Riemannian manifold $M$
such that $\supp X\cap\supp u$ is compact.
Then \eqref{rell} gives, for any smooth domain $D$ in $M$ with outer normal $\nu$ along $\partial D$,
\begin{gather*}
 \int_D\langle X,\nabla u\rangle\Delta u + \frac12\int_D|\nabla u|^2\diver X \\
 \qquad\quad{}= \int_D\langle\nabla_{\nabla u}X,\nabla u\rangle
 - \int_{\partial D}\biggl(\langle X,\nabla u\rangle\langle\nabla u,\nu\rangle
 - \frac12|\nabla u|^2\langle X,\nu\rangle\biggr).
\end{gather*}
As in~\cite[p.~583]{Xavier1988}, we use
\begin{align*}
 |\nabla u|^2 = u\Delta u - \frac12\Delta u^2
\end{align*}
and obtain
\begin{align*}
 \int_D \Delta u\biggl(\langle X,\nabla u\rangle + \frac{u}2\diver X\biggr)
 = {}&\int_D \biggl(\langle\nabla_{\nabla u}X,\nabla u\rangle + \frac14\diver X\Delta u^2\biggr) \\
 &{}- \int_{\partial D}\biggl(\langle X,\nabla u\rangle\langle\nabla u,\nu\rangle
 - \frac12|\nabla u|^2\langle X,\nu\rangle\biggr).
\end{align*}
Since
\begin{align*}
 P_Xu = \langle X,\nabla u\rangle + \frac{u}2\diver X
\end{align*}
satisfies
\begin{align*}
 u P_Xu = \diver\biggl(\frac12u^2X\biggr),
\end{align*}
we arrive at
\begin{align}
 \int_D (\Delta u-\lambda u) P_Xu
 = &\int_D \biggl(\langle\nabla_{\nabla u}X,\nabla u\rangle + \frac14\diver X\Delta u^2\biggr) \notag\\
 &- \int_{\partial D}\biggl(\langle X,\nabla u\rangle\langle\nabla u,\nu\rangle
 - \frac12\bigl(|\nabla u|^2 - \lambda u^2\bigr)\langle X,\nu\rangle\biggr),\label{x1}
\end{align}
which corresponds to~\cite[identity~(2)]{Xavier1988}, but with boundary integral included.
It will also be useful to have the latter formula with one of the substitutions
\begin{align}
 \int_D\diver X\Delta u^2
 &= \int_D\big\langle\nabla\diver X,\nabla u^2\big\rangle - \int_{\partial D}\diver X\nabla_\nu u^2 \label{x2a}\\
 &= \int_D(\Delta\diver X)u^2
 + \int_{\partial D} \bigl((\nabla_\nu\diver X)u^2 - \diver X\nabla_\nu u^2\bigr), \nonumber%\label{x2b}
\end{align}
which is Green's formula applied to the functions $\diver X$ and $u^2$.
Clearly,
\begin{gather}
 \int_D (\Delta u-\lambda u) P_Xu
	\le \|\Delta u-\lambda u\|_2\|P_Xu\|_2
	\le C\|\Delta u-\lambda u\|_2(\|\Delta u\|_2+\|u\|_2)\label{x3}
\end{gather}
if $X$ and $\diver X$ are bounded on $D$,
and $u$ satisfies a non-positive boundary condition.
This will be important in view of~\eqref{Kato9}.

\begin{Theorem}%\label{onlyacd}
Suppose that $K_M<0$ and that $M$ is asymptotically harmonic about a point $\xi\in M_\infty$.
Let $D$ be the complement of a horoball in $M$ with center $\xi$
and $B$ be a self-adjoint elliptic boundary condition $B$ for $D$ which is non-positive.
Then the spectrum of $\Delta$ on $D$ with respect to $B$ is absolutely continuous.
\end{Theorem}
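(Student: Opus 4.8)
The plan is to verify the Kato smoothness criterion~\eqref{Kato9} for a suitable operator $T$ built from the vector field $X$ of~\eqref{nopointd2}, exactly along the lines of Xavier~\cite{Xavier1988} but keeping track of the boundary term as in~\eqref{x1}. Concretely, let $b$ be the Busemann function associated to $\xi$ with $D=b^{-1}((0,\infty))$, and set $X=(1-{\rm e}^{-hb})\nabla b$ as before. As computed in the proof of Theorem~\ref{nopointd}, $\diver X=h$ is bounded, $X$ is bounded (since $0\le 1-{\rm e}^{-hb}\le 1$), and $\nabla X=h{\rm e}^{-hb}\nabla b\otimes\nabla b+(1-{\rm e}^{-hb})\nabla^2 b$ is positive definite on $D$ and, by Observation~\ref{obs}, bounded on $D$. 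Define $T$ to be the densely defined operator $Tu=(\nabla X)^{1/2}\nabla u$ — equivalently one works directly with the quadratic form $u\mapsto\langle\nabla_{\nabla u}X,\nabla u\rangle_2$, which is nonnegative because $\nabla X>0$. The domain issue $\dom T\supseteq\dom S$ from~\eqref{Kato7} is fine: for $u$ in the form domain of $\Delta$ with boundary condition $B$, $\nabla u\in L^2$ and $\nabla X$ is bounded, so $Tu\in L^2$.

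Next I would run the integral identity~\eqref{x1} on $D$ with this $X$. The boundary $\partial D=b^{-1}(0)$ is a horosphere on which $X\equiv 0$, so \emph{both} boundary terms in~\eqref{x1} vanish identically — this is the key simplification coming from the specific choice of $X$, and it is why no non-positivity of the boundary condition is even needed to kill the boundary contribution here (non-positivity enters only through~\eqref{x3} to bound $\|P_Xu\|_2$). Since $\diver X=h$ is constant, the term $\frac14\int_D\diver X\,\Delta u^2=\frac{h}{4}\int_D\Delta u^2$ is handled by Green's formula~\eqref{x2a}: $\Delta\diver X=0$ and the boundary term again vanishes because $X=0$ on $\partial D$, so this term drops out entirely. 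What remains of~\eqref{x1} is
\begin{align*}
  \langle\nabla_{\nabla u}X,\nabla u\rangle_2 = \int_D(\Delta u-\lambda u)P_Xu,
\end{align*}
valid for $u$ in a core of $\Delta$ with boundary condition $B$ and all $\lambda\in\R$. Combining with~\eqref{x3} gives
\begin{align*}
  |Tu|^2 = \langle\nabla_{\nabla u}X,\nabla u\rangle_2 \le C\|\Delta u-\lambda u\|_2\bigl(\|\Delta u\|_2+\|u\|_2\bigr),
\end{align*}
which is precisely~\eqref{Kato9} with $S=\Delta$ and the stated $C$ depending only on the sup norms of $X$ and $\diver X$.

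From here the abstract machinery finishes the argument: by Xavier's observation following~\eqref{Kato9}, criterion~\eqref{Kato7} holds on every bounded Borel set $B\subseteq\R$, so $T$ is $\Delta$-smooth on every bounded interval, hence $\ran E_B T^*\subseteq H_{\ac}(\Delta)$ for all such $B$. It then remains to see that $\overline{\ran T^*}=L^2(D)$, or at least that $(\ker T)^{\perp}$ spans everything after applying spectral projections; since $\nabla X$ is \emph{positive definite} (not merely nonnegative) on $D$, $Tu=0$ forces $\nabla u=0$, so $\ker T$ consists of locally constant functions, and on the connected infinite-volume domain $D$ the only such $L^2$ function is $0$. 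Thus $T$ is injective with dense range, giving $H_{\ac}(\Delta)=L^2(D)$, i.e. the spectrum is purely absolutely continuous. The main obstacle I anticipate is the careful justification of the integration by parts in~\eqref{x1} and~\eqref{x2a} on the non-compact domain $D$: one must approximate $u$ by core elements with $\supp X\cap\supp u$ compact (or truncate using that $\diver X$ and $\nabla X$ are bounded, as in the passage to~\eqref{nopointd6}), control the boundary integrals at infinity, and confirm that the elliptic boundary condition $B$ is preserved under the approximation so that~\eqref{x3} applies — this is the technical heart, whereas the algebraic identities and the Kato-smoothness input are essentially bookkeeping.
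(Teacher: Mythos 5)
Your overall architecture is the paper's: the Rellich--Xavier identity \eqref{x1} with the substitution \eqref{x2a}, the bound \eqref{x3}, Kato smoothness via \eqref{Kato9}, and then killing the orthogonal complement of $\ran T^*$ by positivity of $\nabla X$ and infinite volume. But there is a concrete error in your treatment of the boundary terms. You claim that, since $X\equiv 0$ on $\partial D=b^{-1}(0)$, the boundary contribution coming from $\tfrac14\int_D\diver X\,\Delta u^2$ vanishes and hence that the exact identity $\langle\nabla_{\nabla u}X,\nabla u\rangle_2=\int_D(\Delta u-\lambda u)P_Xu$ holds, with non-positivity of $B$ ``not needed'' at this point. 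The boundary term produced by \eqref{x2a} is $-\int_{\partial D}\diver X\,\nabla_\nu u^2$, and it involves $\diver X=h\neq 0$, not $X$ itself; it does not vanish. What is true, and what the paper uses, is
\begin{align*}
\int_D(\Delta u-\lambda u)P_Xu=\int_D\langle\nabla_{\nabla u}X,\nabla u\rangle-\frac{h}{4}\int_{\partial D}\nabla_\nu u^2\ \ge\ \int_D\langle\nabla_{\nabla u}X,\nabla u\rangle,
\end{align*}
where the inequality holds precisely because the boundary condition is non-positive ($\nabla_\nu u^2\le0$). So non-positivity enters twice: here, and in \eqref{x3}. Your final estimate survives because the inequality goes the right way for \eqref{Kato9}, but the step as you argued it is wrong, and the claim that non-positivity is dispensable for the boundary contribution is false.

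A second, more technical gap is your choice of the single operator $Tu=(\nabla X)^{1/2}\nabla u$. Under the hypothesis $K_M<0$ alone, $b$ is only $C^2$, so $\nabla X$ and its square root are merely continuous; it is then not clear that $T$ (with domain a core of $\Delta$) is closable, that $\dom T^*$ contains enough elements, or how to compute $\ran T^*$ and the duality $\overline{\ran T^*}=(\ker\bar T)^\perp$ that your density argument needs --- the adjoint formally involves differentiating $(\nabla X)^{1/2}$. Moreover, Kato's criterion as quoted requires a closed operator, and uniform positivity of $\nabla X$ fails on all of $D$ (it degenerates both as $b\to0$ in horospherical directions and possibly as $b\to\infty$), so you cannot bound $\|\nabla u\|_2$ globally by $\|Tu\|_2$; only local positivity is available. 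The paper avoids all of this by working with the family $T_Y$ of derivatives along smooth, compactly supported vector fields $Y$: on $\supp Y$ one has $\nabla X\ge C_1>0$, $T_Y$ is trivially closable with transparent adjoint, and $u\perp\ran T_Y^*$ for all such $Y$ gives $\nabla u=0$ distributionally, hence $u=0$ by infinite volume. Your kernel argument can be repaired along those lines, but as written the closability/adjoint issues and the boundary-term claim are genuine gaps.
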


\begin{proof}
We return to the setup in the proof of Theorem~\ref{nopointd}.
Let $b$ be the Busemann function associated to $\xi$ such that $D=b^{-1}((0,\infty))$
and
\begin{align*}
 X = \bigl(1 - {\rm e}^{-hb}\bigr)\nabla b.
\end{align*}
Since $|\nabla b|=1$, $|X|<1$ on $D$.
Recall that $\diver X=h>0$ and that $\nabla X$ is positive definite on~$D$.

As in the proof of~\cite[Theorem~2]{Xavier1988}, let $Y$ be a smooth vector field on $D$ with compact support
in $D$ and $T=T_Y$ be differentiation of functions in the direction of $Y$.
Then there are constants $C_1,C_2>0$ such that $\nabla X\ge C_1$ on the support of $Y$
and such that $|Y|^2\le C_2$.

Now $C^\infty_{c,B}(\bar D)$, the space of smooth functions on $\bar D$ with compact support in $\bar D$
satisfying one of the above boundary condition $B$, is a core for $\Delta$ with boundary condition $B$.
Since $\diver X=h>0$ is constant and $X$ vanishes along $\partial D$, \eqref{x1} and \eqref{x2a} yield,
for $u\in C^\infty_{c,B}(\bar D)$,
\begin{align*}
 \int_D (\Delta u-\lambda u) P_Xu
 &= \int_D \biggl(\langle\nabla_{\nabla u}X,\nabla u\rangle + \frac{h}4\Delta u^2\biggr) \\
 &= \int_D \langle\nabla_{\nabla u}X,\nabla u\rangle - \frac{h}2\int_{\partial D}u\nabla_\nu u \\
 &\ge \int_D \langle\nabla_{\nabla u}X,\nabla u\rangle \ge C_1 \int_{\supp Y}|\nabla u|^2 \\
 &\ge \frac{C_1}{C_2}\int_D|Yu|^2 = \frac{C_1}{C_2}\|T_Yu\|_2^2.
\end{align*}
Now \eqref{x3} applies and shows that $T_Y$ is $S$-smooth on any given bounded Borel subset $A\subseteq\R$.
Hence we get from \eqref{Kato3} that $\ran E_AT_Y^*\subseteq H_{\ac}(\Delta,B)$, for any bounded Borel subset $A\subseteq\R$.
Therefore, $\ran T_Y^*\subseteq H_{\ac}(\Delta,B)$.

Now suppose that $u\in L^2(D )$ is perpendicular to $H_{\ac}(\Delta,B)$.
Then $u$ is perpendicular to $\ran T_Y^*$, for all vector fields $Y$ as above.
But then $T_Yu=0$ in the sense of distributions, for all such vector fields $Y$.
This implies that $u$ is constant and, therefore, that $u$ vanishes, by square-integrability.
\end{proof}

\begin{Theorem}%\label{onlyacd8}
Suppose that $K_M<0$ and that $M$ is asymptotically harmonic about a point $\xi\in M_\infty$.
Let $D\subseteq M$ be the shadow of a smooth domain $C$ in a horosphere $H$ in $M$ with center~$\xi$.
Then the Dirichlet and Neumann spectra of $\Delta$ on $D$ are absolutely continuous.
\end{Theorem}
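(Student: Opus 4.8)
The plan is to run the Kato-smoothness argument of the preceding theorem with the same vector field, now combined with the boundary analysis already carried out in the proof of Theorem~\ref{nopointd8}. Let $b$ be the Busemann function associated to $\xi$ with $H=b^{-1}(0)$, and put $X=\bigl(1-{\rm e}^{-hb}\bigr)\nabla b$ as in \eqref{nopointd2}. Then $|X|<1$ and $\diver X=h>0$ is constant on all of $M$, and $\nabla X$ is positive definite on $b^{-1}((0,\infty))$, hence on $D\setminus C$ and in particular on the interior of $D$. The boundary of $D$ splits into the smooth cap $C\subseteq H$, where $X$ vanishes because $b\equiv 0$ on $H$, and the lateral shadow $\Sigma$ of $\partial C$, along which $X$ is tangential; since the singular stratum $\partial C$ has codimension two in $M$, the divergence formula is available on $D$.

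Next I would fix a smooth vector field $Y$ with compact support in the interior of $D$ and let $T_Y$ be differentiation in the direction $Y$; on $\supp Y$ one has $\nabla X\ge C_1$ and $|Y|^2\le C_2$ for suitable constants $C_1,C_2>0$. The space $C^\infty_{c,B}(\bar D)$, with $B$ the Dirichlet or the Neumann condition, is a core for $\Delta$ with that condition, and both conditions are non-positive. For $u\in C^\infty_{c,B}(\bar D)$ I would insert $\diver X=h$ into \eqref{x1} and use \eqref{x2a} exactly as in the preceding theorem, and then check, just as in the proof of Theorem~\ref{nopointd8}, that every boundary integral drops out: on $C$ because $X=0$ there; on $\Sigma$ because $\langle X,\nu\rangle=0$ kills the term involving $\bigl(|\nabla u|^2-\lambda u^2\bigr)\langle X,\nu\rangle$, while $\langle X,\nabla u\rangle\langle\nabla u,\nu\rangle$ vanishes since $\langle X,\nabla u\rangle=0$ for Dirichlet (then $\nabla u$ is normal to $\Sigma$) and $\langle\nabla u,\nu\rangle=0$ for Neumann; and the term $\nabla_\nu u^2=2u\nabla_\nu u$ produced by \eqref{x2a} vanishes on all of $\partial D$, with $u=0$ in the Dirichlet and $\nabla_\nu u=0$ in the Neumann case. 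This leaves
\begin{align*}
\int_D(\Delta u-\lambda u)P_Xu=\int_D\langle\nabla_{\nabla u}X,\nabla u\rangle\ge C_1\int_{\supp Y}|\nabla u|^2\ge\frac{C_1}{C_2}\|T_Yu\|_2^2,
\end{align*}
while \eqref{x3}, applicable since $X$ and $\diver X$ are bounded on $D$ and $B$ is non-positive, bounds the left-hand side by $C\|\Delta u-\lambda u\|_2\bigl(\|\Delta u\|_2+\|u\|_2\bigr)$. Together these give the estimate \eqref{Kato9} for $T=T_Y$ on any bounded Borel set.

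From here the argument is word for word that of the preceding theorem: by \eqref{Kato9}, through \eqref{Kato7} and \eqref{Kato3}, $T_Y$ is smooth over each bounded Borel set $A\subseteq\R$, so $\ran E_AT_Y^*\subseteq H_{\ac}(\Delta,B)$ and hence $\ran T_Y^*\subseteq H_{\ac}(\Delta,B)$; and if $u\in L^2(D)$ is orthogonal to $H_{\ac}(\Delta,B)$, then $T_Yu=0$ distributionally for every admissible $Y$, so $\nabla u=0$ on the interior of $D$, $u$ is constant, and $u=0$ since $D$ has infinite volume.

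The step I expect to be the real obstacle is not the abstract machinery, which is already in place, but the boundary bookkeeping on the rough domain $D$: one must justify the divergence formula on $D$, whose boundary is only Lipschitz with a codimension-two singular stratum $\partial C$, and confirm that $C^\infty_{c,B}(\bar D)$ really is a core for the Dirichlet, respectively Neumann, Laplacian of such a domain. The place where the hypothesis is used is that, unlike a general non-positive boundary condition, the Dirichlet and Neumann conditions force all boundary terms on both $C$ and the lateral part $\Sigma$ to vanish; this is exactly why the statement is restricted to these two conditions rather than to arbitrary non-positive $B$, as in the exterior-of-a-horoball case.
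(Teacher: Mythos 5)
Your proposal is correct and follows essentially the same route as the paper: the paper's own proof of this theorem is just the remark that one repeats the Kato--smoothness argument of the preceding theorem while handling the boundary terms as in the proof of Theorem~\ref{nopointd8}, which is precisely what you carry out (with the boundary contributions on $C$ and on the lateral shadow checked correctly for Dirichlet and Neumann conditions). The two points you flag as delicate --- the divergence formula on the Lipschitz domain with codimension-two singular stratum $\partial C$, and the core property of $C^\infty_{c,B}\bigl(\bar D\bigr)$ --- are likewise taken for granted in the paper, so nothing in your argument deviates from or falls short of the intended proof.
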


\begin{proof}
The proof is similar to the above one,
changing the proof of Theorem~\ref{nopointd8}, instead of Theorem~\ref{nopointd}, analogously.
\end{proof}

\begin{proof}[Proof of first part of Theorem~\ref{onlyacm}]
Let $b$ be a Busemann function associated to $\xi$ and ${X=\nabla b}$.
Then $|X|=1$ and $\diver X=h>0$.
Furthermore, $\nabla X$ is positive definite on the orthogonal complement of $X$,
that is, in the direction to the horospheres in $M$ with center $\xi$.

Let $Y$ be a smooth vector field on $M$ with compact support such that $Y\perp X$.
Then there are constants $C_1,C_2>0$ such that $\nabla X\ge C_1$ on the support of $Y$ and perpendicularly to $X$
and such that $|Y|^2\le C_2$.

Now $C^\infty_{\rm c}(M)$ is a core for $\Delta$.
Since $\diver X=h>0$ is constant, \eqref{x1} and \eqref{x2a} yield, for $u\in C^\infty_{\rm c}(M)$,
\begin{align*}
 \int_M (\Delta u-\lambda u) P_Xu
 &= \int_M \langle\nabla_{\nabla u}X,\nabla u\rangle \ge C_1 \int_M|\nabla^\bot u|^2 \\
 &\ge \frac{C_1}{C_2}\int_M|Yu|^2 = \frac{C_1}{C_2}\|T_Yu\|_2^2,
\end{align*}
where $\nabla^\bot u$ denotes the component of $\nabla u$ perpendicular to $X$.
Now \eqref{x3} applies and shows that $T_Y$ is $S$-smooth on any given bounded Borel subset $A\subseteq\R$.
Hence \eqref{Kato3} implies that $\ran E_AT_Y^*\subseteq H_{\ac}(\Delta)$,
for any bounded Borel subset $A\subseteq\R$.
Therefore, $\ran T_Y^*\subseteq H_{\ac}(\Delta)$.

Now suppose that $u\in L^2(D )$ is perpendicular to $H_{\ac}(\Delta)$.
Then $u$ is perpendicular to $\ran T_Y^*$, for all vector fields $Y$ as above.
But then $T_Yu=0$ in the sense of distributions, for all such vector fields $Y$.
This implies that $u$ is constant along horospheres, that is, levels of $u$ are unions of horospheres.
Now the preimage $b^{-1}(B)$ of any Borel subset $B\subseteq\R$ of positive measure has infinite measure.
Therefore, $u$ vanishes, by square-integrability.
\end{proof}

\begin{proof}[Proof of part of Theorem~\ref{onlyacn}]
By the definition of elementary groups of isometries of $M$, the gradient field $X$ of Busemann functions,
that is, the field of velocity vectors of unit speed geodesics emanating from $\xi$,
is invariant under $\Gamma$.
Notice that $X$ spans the normal space to the fibers of the Riemannian submersion $\pi$.
Now the arguments of the previous proof apply and show that $u\in L^2(N)$ is perpendicular to $H_{\ac}(N)$
if and only if $u$ is constant on the fibers of $\pi$.
There are now two cases:
The fibers of $\pi$ have infinite volume.
This is always the case when $\Gamma$ is of type \ref{ax}.
Then, as in the previous proof, the square-integrability of $u$ implies that $u=0$.

In the second case, $\Gamma$ is of type \ref{pa}.
Then the flow $(F_t)_{t\in\R}$ of $X$ induces diffeomorphisms between the fibers of $\pi=b$,
and the volume element of horospheres is multiplied by $\exp(ht)$ under $F_t$.
Hence, since the fibers of $b$ have finite volume, their volumes satisfy
\begin{align*}
	\exp(ht) \big|b^{-1}(s)\big| = \exp(hs) \big|b^{-1}(t)\big|.
\end{align*}
Furthermore, the space $L^2_b(N)$ of functions on $N$, which are constant fiberwise,
and its orthogonal complement $L^2_0(N)$ are invariant under $\Delta$.
That is, the spectrum of $\Delta$ is the combination of the spectra of $\Delta$ on $L^2_b(N)$
and of $\Delta$ on $L^2_0(N)$.
By what we said above, if $u\in L^2_0(N)$ is perpendicular to $H_{\ac}(N)$, then $u=0$.
Thus $L^2_0(N)\subseteq H_{\ac}(N)$, and we are left with discussing $\Delta$ on $L^2_b(N)$.

Note that $u\in L^2(N)$ is in $L^2_b(N)$ if and only if $u$ is a pull-back $b^*v$ for some function $v$ on $\R$.
We endow $\R$ with the measure $\mu=h_0\exp(ht){\rm d}t$,
where $h_0=\big|b^{-1}(0)\big|$ and get that
\begin{align*}%\label{unitary}
	b^* \colon \ L^2(\R,\mu) \to L^2_b(N)
\end{align*}
is a unitary transformation.
With respect to $b^*$, the Laplacian on $N$ corresponds to a diffusion operator on $\R$,
\begin{align*}%\label{diffusion}
	\Delta b^*v = b^*\bigl(- v'' - hv'\bigr).
\end{align*}
Let $\vf$ be the square root of $h_0\exp(ht)$.
Then
\begin{align*}%\label{diffusion2}
	\vf' = \frac{h}2\vf \qquad\text{and}\qquad \vf'' = \frac{h^2}4\vf.
\end{align*}
Therefore, under the unitary transformation
\begin{align*}
	L^2(\R,\mu) \to L^2(\R), \qquad v \mapsto \vf v,
\end{align*}
we obtain
\begin{align*}
	(\vf v)'' = \vf''v +2\vf'v' + \vf v'' = \vf\biggl(\frac{h^2}4 v + hv' + v''\biggr).
\end{align*}
We conclude that the Schr\"odinger operator $Sw=-w''+h^2w/4$ in $L^2(\R)$
corresponds to the above diffusion operator on $L^2(\R,\mu)$.
Since $h^2/4$ is a constant, the spectrum of the latter is absolutely continuous and equal to $\bigl[h^2/4,\infty\bigr)$.
By unitary equivalence, the same holds for $\Delta$ on~$L^2_b(N)$.

By Observation~\ref{chee}, $\sigma(N)\subseteq\bigl[h^2/4,\infty\bigr)$, hence the above shows equality,
$\sigma(N)=\bigl[h^2/4,\infty\bigr)$, in the case where the fibers of $\pi$ are of finite volume.
The case where the fibers are of infinite volume will be discussed in Section~\ref{secsig}.
\end{proof}

\section{Homogeneous Hadamard manifolds}
\label{sechom}
%%%%%%%%%%%%%%%%%%%%%%%%%%%%%%%%%%%%%%%%%%%%%%

The aim of this section is the proof of Theorem~\ref{onlyach}.
In contrast to Theorem~\ref{onlyacm}, we do not assume that the sectional curvature is negative.
For that reason, we need to investigate the underlying geometry of the manifolds more carefully
and start with the necessary background material.
Our main source is~\cite{Wolter91},
and we refer the reader to it for more details and references.
Another good reference is~\cite{Heber93}.

Let $M$ be a homogeneous Hadamard manifold.
Then there is a solvable Lie group $S$ of isometries of $M$ which acts simply transitively on $M$.
By choosing an origin $x_0$ of $M$,
the orbit map $S\to M$, $g\mapsto gx_0$, is a diffeomorphism,
and the pull-back of the Riemannian metric of~$M$ to~$S$ is a left-invariant metric on $S$.
In this way, we identify $M$ with the simply connected solvable Lie group $S$,
endowed with the above left-invariant Riemannian metric.

Let $\mathfrak s$ be the Lie algebra of $S$,
Then $\mathfrak n=[\mathfrak s, \mathfrak s]$ is nilpotent,
the orthogonal complement $\mathfrak a=\mathfrak n^\perp$ of $\mathfrak n$ in $\mathfrak s$ is Abelian,
and $S=A\ltimes N$,
where $A$ and $N$ denote the connected Lie subgroups of~$S$ tangent to $\mathfrak a$ and $\mathfrak n$,
respectively.
Note that $A$ and $N$ are simply connected and, therefore, non-compact.
By Corollary~\ref{eufac}, we can assume that $M$ respectively $S$ has no Euclidean factor.
This is convenient since it keeps the structure of $S$ more accessible; cf.~\cite[Section~1]{Wolter91}.

For $v\in\mathfrak a$,
let $D_v$ and $S_v$ be the symmetric and skew-symmetric parts of $\ad_v|_{\mathfrak n}$.

\begin{Proposition}%\label{hh}
There are an $\ad\mathfrak a$-invariant orthogonal decomposition
\begin{align*}
	\mathfrak n = \mathfrak n_1 \oplus\dots\oplus \mathfrak n_k
\end{align*}
 of $\mathfrak n$ into subalgebras $\mathfrak n_j$, \emph{roots} $\mu_j\in\mathfrak a^*$,
 and positive definite symmetric operators $D_j$ on $\mathfrak n_j$, for $1\le j\le k$, such that
\begin{enumerate}[label={\rm (\arabic*)}]\itemsep=0pt
\item\label{hh1}
for all $v\in\mathfrak a$, the restriction of the symmetric part $D_v$ of $\ad_v$ to $\mathfrak n_j$
is given by $\mu_j(v)D_j$;
\item%\label{hh2}
the $\mu_j$ are pairwise not collinear and span $\mathfrak a^*$;
\item%\label{hh3}
$W=\{v\in\mathfrak a \mid \text{$\mu_j(v)>0$ for all $1\le j\le k$} \}\ne\varnothing$.
\end{enumerate}
\end{Proposition}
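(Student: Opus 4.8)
The plan is to deduce the Proposition from the structure theory of homogeneous Hadamard manifolds developed in~\cite{Wolter91} (see also~\cite{Heber93,AzencottWilsonI76}); the single genuinely geometric ingredient, and the main obstacle to a self-contained argument, is the consequence of $K_M\le0$ recalled in the next paragraph, after which everything is bookkeeping with joint eigenspaces.

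First I would record the algebraic input. Writing $M\cong S=A\ltimes N$ as above and splitting $\ad_v|_{\mathfrak n}=D_v+S_v$ into symmetric and skew parts for $v\in\mathfrak a$, the structure theory of~\cite{Wolter91} (going back to~\cite{AzencottWilsonI76}) shows that the non-positivity of the left-invariant metric forces the operators $D_v$, $v\in\mathfrak a$, to form a commuting family of self-adjoint operators on $\mathfrak n$, forces each $S_v$ to preserve the joint eigenspaces of the family $\{D_w\}$, and makes $D_v$ positive definite for $v$ in a nonempty open cone of $\mathfrak a$. Granting this, I would simultaneously diagonalize $\{D_v:v\in\mathfrak a\}$ to obtain an orthogonal decomposition $\mathfrak n=\bigoplus_\alpha\mathfrak n_\alpha$ into joint eigenspaces indexed by a finite set $R\subseteq\mathfrak a^*$ of \emph{roots}, with $D_v|_{\mathfrak n_\alpha}=\alpha(v)\,\mathrm{id}$; the eigenvalue is linear in $v$ precisely because $v\mapsto D_v$ is. Since $\mathfrak a$ is abelian the $\ad_v$ commute with one another, and since each $S_v$ preserves the $\mathfrak n_\alpha$, the Leibniz rule gives the bracket relations $[\mathfrak n_\alpha,\mathfrak n_\beta]\subseteq\mathfrak n_{\alpha+\beta}$ (with $\mathfrak n_{\alpha+\beta}:=0$ when $\alpha+\beta\notin R$) and the invariance $[\mathfrak a,\mathfrak n_\alpha]\subseteq\mathfrak n_\alpha$.

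Next I would bring in the no-Euclidean-factor assumption, through~\cite{Wolter91}, to conclude that $R$ spans $\mathfrak a^*$ (otherwise a nonzero $v\in\mathfrak a$ would have $D_v=0$ and, together with $S_v$, would split off a flat factor) and that $R$ lies in an open half-space, i.e.\ there is $v_0\in\mathfrak a$ with $\alpha(v_0)>0$ for every $\alpha\in R$. The remaining step is the elementary grouping into rays: declare $\alpha\sim\beta$ if $\beta=c\alpha$ for some $c>0$, choose a representative $\mu_j$ of each equivalence class, set $\mathfrak n_j:=\bigoplus_{\alpha\sim\mu_j}\mathfrak n_\alpha$, and define a symmetric operator $D_j$ on $\mathfrak n_j$ that acts on $\mathfrak n_\alpha$ as multiplication by $c_\alpha$, where $\alpha=c_\alpha\mu_j$. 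The scalars $c_\alpha$ are positive because $\alpha(v_0)$ and $\mu_j(v_0)$ are, so $D_j$ is positive definite, and on $\mathfrak n_\alpha\subseteq\mathfrak n_j$ one gets $D_v=\alpha(v)\,\mathrm{id}=\mu_j(v)D_j$, which is the content of part~\ref{hh1}. The $\mathfrak n_j$ are $\ad\mathfrak a$-invariant because the $\mathfrak n_\alpha$ are, and they are subalgebras since $\alpha+\beta$ is again a positive multiple of $\mu_j$ whenever $\alpha$ and $\beta$ are. Finally the $\mu_j$ are pairwise non-collinear by construction, and no $\mu_j$ is a negative multiple of another since all $\mu_j(v_0)>0$; they span $\mathfrak a^*$ because $R$ does; and $v_0\in W$ shows $W\ne\varnothing$. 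This establishes (2) and (3).

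The hard part lies entirely in the second paragraph: the commutation properties of the $D_v$ and $S_v$ and the positivity of $D_v$ on an open cone encode a curvature computation for the left-invariant metric under the hypothesis $K_M\le0$. Since this is carried out in~\cite{Wolter91}, in the write-up I would cite it for those facts and present only the ray-grouping of the third paragraph, which is the part not stated there in exactly this form.
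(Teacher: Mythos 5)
The paper does not actually prove this Proposition: it is stated as background material quoted from the structure theory of homogeneous Hadamard manifolds in~\cite{Wolter91} (see also~\cite{Heber93,AzencottWilsonI76}), so your decision to cite Wolter for the geometric core is in line with what the authors do. The difficulty is with the part you claim to derive yourself. From your stated inputs --- the $D_v$ commute, each $S_v$ preserves the joint eigenspaces $\mathfrak n_\alpha$ of $\{D_w\}$, and $D_v$ is positive definite for $v$ in a nonempty open cone --- the ray-grouping, the positivity of the $D_j$, the pairwise non-collinearity, the spanning of $\mathfrak a^*$, and $W\neq\varnothing$ do follow as you describe. What does not follow is the assertion that ``the Leibniz rule gives $[\mathfrak n_\alpha,\mathfrak n_\beta]\subseteq\mathfrak n_{\alpha+\beta}$'', which is precisely what you need to conclude that the $\mathfrak n_j$ are subalgebras. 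The spaces $\mathfrak n_\alpha$ are eigenspaces of the symmetric parts $D_v$ only; $D_v$ is not a derivation of $\mathfrak n$, and $\ad_v$ is a derivation but does not act on $\mathfrak n_\alpha$ by the scalar $\alpha(v)$ unless $S_v|_{\mathfrak n_\alpha}=0$. Writing $P_\gamma$ for the orthogonal projection onto $\mathfrak n_\gamma$ (which commutes with $\ad_v$ by your invariance assumption), the derivation identity only yields, for $x\in\mathfrak n_\alpha$ and $y\in\mathfrak n_\beta$,
\begin{align*}
	(\gamma-\alpha-\beta)(v)\,P_\gamma[x,y]
	= P_\gamma\bigl([S_vx,y]+[x,S_vy]\bigr)-S_vP_\gamma[x,y],
\end{align*}
and the right-hand side has no reason to vanish when the $S_v$ are nontrivial, so you cannot conclude $P_\gamma[x,y]=0$ for $\gamma\neq\alpha+\beta$. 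The classical root-space argument requires the grading operator to be a derivation (or $\ad_v$ to be normal, after which one complexifies and works with generalized eigenspaces of $\ad_v$ itself); that the decomposition by the symmetric parts nevertheless has this compatibility with the bracket is genuinely part of the curvature input, not formal bookkeeping.

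Relatedly, the facts you propose to import from~\cite{Wolter91} (simultaneous diagonalizability of the $D_v$ together with $S_v$-invariance of the joint eigenspaces, plus the subalgebra property) are essentially equivalent to the Proposition itself rather than a weaker standing lemma, so the reduction risks being circular unless you verify that Wolter states them in that form; likewise, ``if some nonzero $v$ has $D_v=0$ then a flat de~Rham factor splits off'' needs an argument, since skew-symmetry of $\ad_v|_{\mathfrak n}$ alone does not produce a Euclidean factor. The safe course, and the one the paper takes, is to quote the decomposition in exactly the stated form from~\cite[Section~1]{Wolter91} (where the $\ad\mathfrak a$-invariance and the subalgebra property are part of the cited statement, with the no-Euclidean-factor reduction supplied by Corollary~\ref{eufac}); if instead you want to present the joint-eigenspace derivation, you must either add the curvature computations that justify the bracket relations or take them, too, from the cited structure theory.
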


Let $v\in W$ and $\gamma$ be the geodesic through the neutral element of $S$ with initial velocity $v$.
By~\cite[Proposition~3.3]{Wolter91}, $S$ fixes $\xi=\gamma(\infty)$.
Hence, since all left-translations are isometries of $S$, we conclude that $S$ is asymptotically harmonic about $\xi$.

View elements of $\mathfrak s$ now as left-invariant vector fields on $S$,
where we use $V$ to denote the one corresponding to $v$.
Since $S$ fixes $\xi$, the gradient of Busemann functions on $S$ centered at $\xi$ equals $-V$.
Hence the second derivative of Busemann functions on $S$ centered at $\xi$ equals $-\nabla V$.

By~\ref{hh1} and the choice of $v$, there exists a constant $c>0$ such that
\begin{align*}
	\langle [v,z],z\rangle = \langle\ad_vz,z\rangle = \langle D_vz,z\rangle \ge c|z|^2
\end{align*}
for all $z\in \mathfrak n$.
Hence we obtain
\[
	-\langle\nabla_ZV,Z\rangle = \langle[V,Z],Z\rangle \ge c|Z|^2
\]
for any left-invariant vector field $Z$ coming from $\mathfrak n$.
Since $\mathfrak a$ is Abelian,
\[
	-\langle\nabla_YV,Y\rangle = \langle[V,Y],Y\rangle = 0
\]
for any left-invariant vector field $Y$ coming from $\mathfrak a$.
Finally, for $Z$ coming from $\mathfrak n$ and $Y$ from $\mathfrak a$, we compute
\[
	\langle \nabla_ZV ,Y \rangle + \langle \nabla_YV ,Z \rangle
= \langle \nabla_VZ ,Y \rangle + \langle \nabla_YV ,Z \rangle = \langle Z , [Y,V] \rangle = 0,
\]
where we use that $\mathfrak n$ is an ideal and that $\mathfrak a$ is Abelian.

\begin{proof}[Proof of first part of Theorem~\ref{onlyach}]
Now we adapt the arguments from the proof of Theorem~\ref{onlyacm} to the present situation and set $X=-V$.
Recall that $V$ is a gradient field so that~$\nabla X$ is symmetric.
On the left-invariant distribution $S\mathfrak a$ coming from $\mathfrak a$, we have $\nabla X=0$.
With~$C_1=c$, where $c$ is as above, we have $\nabla X\ge C_1$
on the left-invariant distribution $S\mathfrak n$ coming from $\mathfrak n$.
Furthermore, $\langle\nabla_{S\mathfrak a}X,S\mathfrak n\rangle=0$.

Let $Y$ be a smooth vector field on $S$ with compact support which is perpendicular to $S\mathfrak a$.
Choose a constant $C_2>0$ such that $|Y|^2\le C_2$.

Now $C^\infty_{\rm c}(M)$ is a core for $\Delta$.
Since $\diver X=h>0$ is constant, \eqref{x1} and \eqref{x2a} yields, for $u\in C^\infty_{\rm c}(M)$,
\begin{align*}
 \int_M (\Delta u-\lambda u) P_Xu
 &= \int_M \langle\nabla_{\nabla u}X,\nabla u\rangle \ge C_1 \int_M|\nabla^\bot u|^2 \\
 &\ge \frac{C_1}{C_2}\int_M|Yu|^2 = \frac{C_1}{C_2}\|T_Yu\|_2^2,
\end{align*}
where $\nabla^\bot u$ denotes the component of $\nabla u$ perpendicular to $S\mathfrak a$.
Now \eqref{x3} applies and shows that $T_Y$ is $S$-smooth on any given bounded Borel subset $B\subseteq\R$.
Hence \eqref{Kato3} implies that $\ran E_BT_Y^*\subseteq H_{\ac}(M)$,
for any bounded Borel subset $B\subseteq\R$.
Therefore, $\ran T_Y^*\subseteq H_{\ac}(M)$.

Now suppose that $u\in L^2(M)$ is perpendicular to $H_{\ac}(M)$.
Then $u$ is perpendicular to $\ran T_Y^*$, for all vector fields $Y$ as above.
But then $T_Yu=0$ in the sense of distributions, for all such vector fields $Y$.
This implies that $u$ is constant along the orbits of $N$,
since the orbits of~$N$ meet the distribution $S\mathfrak a$ orthogonally.
Since $N$ is non-compact, the $N$-orbits have infinite ($\dim N$-dimensional) volume.
Hence the union of orbits meeting a Borel subset of an $A$-orbit of positive ($\dim A$-dimensional) volume
has infinite volume in $S$.
Since $u$ is square-integrable, we conclude that $u$ is identically zero.
This concludes the proof that the spectrum of $M$ is absolutely continuous.
\end{proof}

The second part of Theorem~\ref{onlyach} holds under much more general assumptions.
Therefore, we state it as an extra result here.

\begin{Theorem}\label{specsn}
Let $G$ be a connected Lie group, endowed with a left-invariant Riemannian metric.
Assume that $G$ is solvable or is a compact extension of a solvable Lie group,
Let $N$ be the derived subgroup of $G$, and assume that $Q=N\backslash G$ is non-compact.
Then $\sigma(G)=\bigl[h^2/4,\infty\bigr)$, where $h$ is the mean curvature of $N$ as a submanifold of $G$,
given by
\begin{align*}
	h^2 = \sum_j (\tr\ad_{X_j}|_{\mathfrak n})^2
	= \sum_j (\tr\ad_{X_j})^2.
\end{align*}
Here $(X_j)$ is an orthonormal basis of the orthogonal complement of $\mathfrak n$ in $\mathfrak s$.
\end{Theorem}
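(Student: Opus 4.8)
The plan is to reduce the statement to a one-dimensional spectral computation, much as in the proof of the finite-volume case of Theorem~\ref{onlyacn}, but keeping track of the fact that the base $Q = N\backslash G$ may have dimension larger than one and that $G$ need not be simply connected. First I would set up the geometry: let $\mathfrak g$ be the Lie algebra of $G$, $\mathfrak n = [\mathfrak g,\mathfrak g]$ the derived subalgebra, and $\mathfrak a = \mathfrak n^\perp$ its orthogonal complement with respect to the left-invariant metric. Viewing elements of $\mathfrak g$ as left-invariant vector fields, the mean curvature vector $H$ of the $N$-orbits is left-invariant, hence corresponds to a fixed vector in $\mathfrak a$; a short computation with the Koszul formula gives $\langle H, X\rangle = -\tr(\ad_X|_{\mathfrak n})$ for $X\in\mathfrak a$, and since $\ad_X$ acts trivially on $\mathfrak g/\mathfrak n$ (because $[\mathfrak g,\mathfrak g]=\mathfrak n$), also $\tr(\ad_X|_{\mathfrak n}) = \tr\ad_X$. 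Writing $h = |H|$ in an orthonormal basis $(X_j)$ of $\mathfrak a$ then yields the stated formula $h^2 = \sum_j(\tr\ad_{X_j}|_{\mathfrak n})^2 = \sum_j(\tr\ad_{X_j})^2$. The hypothesis that $G$ is solvable or a compact extension of a solvable group guarantees amenability, and one checks the identity component is unimodular, so that Theorem~\ref{p13} applies to the $N$-action on $G$ (which is proper, free, and isometric): it suffices to prove $\sigma(S) = [h^2/4,\infty)$ for the Schr\"odinger operator $S = \Delta_Q + \tfrac14|\pi_*H|^2 - \tfrac12\diver\pi_*H$ of \eqref{pdef} on $Q$.

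Next I would analyze $Q$. Since $N$ is normal in $G$, the quotient $Q = N\backslash G$ is itself a Lie group with a left-invariant metric, with abelian Lie algebra $\mathfrak g/\mathfrak n \cong \mathfrak a$; thus $Q$ is a flat Lie group, i.e.\ a cylinder $\R^p\times T^q$ with a flat product metric, and $p\ge 1$ because $Q$ is non-compact. The push-forward $\pi_*H$ is the left-invariant (hence parallel, constant) vector field on $Q$ corresponding to $H\in\mathfrak a$, so $|\pi_*H|^2 = h^2$ is constant and $\diver\pi_*H = 0$. Therefore $S = \Delta_Q + h^2/4$ on $L^2(Q)$, and since $Q$ is a flat cylinder of dimension $\ge 1$, its Laplacian has purely absolutely continuous spectrum $[0,\infty)$ — one sees this by Fourier analysis: on $\R^p\times T^q$ the Laplacian is unitarily equivalent to a direct sum (over the lattice dual to the torus) of the operators $|\cdot|^2 + |\mathrm{const}|^2$ on $L^2(\R^p)$, whose union of spectra is $[0,\infty)$ since $p\ge1$. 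Hence $\sigma(S) = [h^2/4,\infty)$. Invoking Theorem~\ref{p13} gives $\sigma(G) = [h^2/4,\infty)$.

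The one genuine subtlety — and the step I expect to require the most care — is verifying that the hypotheses of Theorem~\ref{p13} are met, namely amenability of $N$ (clear: $N$ is nilpotent, hence amenable) and, more delicately, that $G$ (the acting group in the submersion $\pi\colon G\to Q$, or rather $N$ acting on $G$, so really that $N$'s identity component is unimodular — which it is, being nilpotent) fits the framework; one must also confirm that the $N$-action on $G$ by left translations is the correct setup and that $\pi\colon G\to N\backslash G$ is the Riemannian submersion to which \eqref{pdef} refers. If instead one wants to avoid Theorem~\ref{p13} for the inclusion $\sigma(S)\subseteq\sigma(G)$ direction and argue more directly, the alternative is to run the Xavier-type argument of the proof of Theorem~\ref{onlyach} with $X = -H/h$ (a left-invariant unit gradient field, since $H$ is a multiple of the gradient of a Busemann-type function when $G$ is Hadamard) to get $\sigma_{\ac}(G) = [h^2/4,\infty)$, combined with Observation~\ref{chee} for the lower bound; but the $S$-operator route via \cite{P21} is cleaner and handles the non-Hadamard and compact-extension cases uniformly, so that is what I would write.
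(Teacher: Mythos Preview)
Your approach is essentially identical to the paper's: apply Theorem~\ref{p13} to the left $N$-action on $G$, identify $Q=N\backslash G$ as a flat abelian Lie group $\R^p\times T^q$ with $p\ge1$, observe that $\pi_*H$ is parallel so that $S=\Delta_Q+h^2/4$ with $\sigma(S)=[h^2/4,\infty)$, and compute $H$ via Koszul to obtain the trace formula. One correction: your justification that $N$ is nilpotent is valid only when $G$ itself is solvable (then $[\mathfrak g,\mathfrak g]$ is nilpotent by Lie's theorem), but fails in the compact-extension case---for instance $G=SO(3)\times\R$ has $N=SO(3)$. The paper instead argues that $N$ is amenable because it is a closed subgroup of the amenable group $G$, and that $N$ is unimodular because it is the derived subgroup of a Lie group (so $\tr\ad_X=0$ for $X\in\mathfrak n$); both facts hold without nilpotency and are what Theorem~\ref{p13} actually requires.
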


\begin{proof}
We invoke Theorem~\ref{p13}.
The Riemannian manifold $M$ there corresponds to the Lie group $G$ here,
the Lie group $G$ acting on $M$ there corresponds to $N$ here, acting freely on $G$ by left-translations.
Since $N$ is a closed subgroup of $G$ and the Riemannian metric on $G$ is left-invariant,
the action is proper and isometric.

Since $N$ is the derived subgroup of $G$, it is a closed and normal subgroup of $G$
such that the quotient $Q=N\backslash G$ is an Abelian Lie group.
As the derived subgroup of a connected Lie group, $N$ is connected.

Since $N$ is normal in $G$, the fibration $\pi\colon G\to Q$ is $G$-invariant.
Therefore, the left-invariant Riemannian metric on $G$ induces a left-invariant Riemannian metric on $Q$,
such that $\pi$ is a~Riemannian submersion.
Since $Q$ is connected and Abelian, the left-invariant metric on $M$ is flat
and $Q$ is a Euclidean space $E$ times a torus $T$.
Since $Q$ is non-compact, $\dim E>0$.
From Corollary~\ref{sumdel} and Example~\ref{rmac},
we conclude that the Laplace spectrum $\sigma(Q)=[0,\infty)$.

Since $\pi$ is invariant under left-translation by $G$,
the field $H$ of mean curvature of the fibers is left-invariant and is $\pi$-related
to a left-invariant vector field on $Q$, the push-forward~$\pi_*H$ of~$H$.
Since $G$ is amenable, being a compact extension of a solvable Lie group,
we derive that so is the closed subgroup $N$.
Furthermore, as the derived subgroup of a Lie group, $N$ is unimodular.
Thus we may apply Theorem~\ref{p13} to obtain that $\sigma(M)=[\lambda_0,\infty)$,
where $\lambda_0$ is equal to the bottom of the spectrum of the Schr\"odinger operator $S$ on $Q$ defined in \eqref{pdef}.

Since $Q$ is Abelian and $\pi_*H$ is left-invariant,
$\pi_*H$ is parallel, hence of constant norm with vanishing divergence.
Therefore, the potential of the Schr\"odinger operator $S$ in \eqref{pdef} is constant
and equal to $h^2/4=|\pi_*H|^2/4=|H|^2/4$.
Since the bottom of the spectrum of the Laplacian on~$Q$ is equal to~$0$,
we get $\lambda_0=h^2/4$ as claimed.

Let $(Y_i)$ be an orthonormal basis of $\mathfrak n$.
Then the mean curvature field $H$ of the fibers of~$\pi$ is the left-invariant vector field
which, at the neutral element, is equal to the component of $\sum_i\nabla_{Y_i}Y_i$
perpendicular to $\mathfrak n$.
If $(X_j)$ is an orthonormal basis of the orthogonal complement of~$\mathfrak n$ in~$\mathfrak s$,
then we get that
\begin{align*}%\label{hns}
	H = \sum_{i,j}\langle\nabla_{Y_i}Y_i,X_j\rangle X_j
	= \sum_{i,j}\langle[X_j,Y_i],Y_i\rangle X_j
	= \sum_j(\tr\ad_{X_j}|_{\mathfrak n})X_j,
\end{align*}
which implies the first asserted equality.
The second follows since the orthogonal complement of $\mathfrak n$ in $\mathfrak s$
does not contribute to $\tr\ad_{X_j}$.
\end{proof}

%%%%%%%%%%%%%%%%%%%%%%%%%%%%%%%%%%%%%%%%%%%%%%
\section{Determination of the spectrum}\label{secsig}
%%%%%%%%%%%%%%%%%%%%%%%%%%%%%%%%%%%%%%%%%%%%%%

In this section we aim at the determination of the spectrum $\sigma(M)$.
We start with the case where $M$ is a Hadamard manifold with sectional curvature $K_M<0$
which is asymptotically harmonic about a point $\xi\in M_\infty$
with mean curvature of horospheres equal to $h>0$.
Recall from the first part of the proof of Theorem~\ref{onlyacn}
that the case left is $N=\Gamma\backslash M$,
where the fibers of the associated Riemannian submersion $\pi=b\colon N\to\R$ are of infinite volume,
where $b$ is the push-down to $N$ of some Busemann function associated to $\xi$.
We subsume the case $N=M$ here by including $\Gamma=\{\id\}$.
From Observation~\ref{chee}, we know that $\sigma(N)\subseteq\bigl[h^2/4,\infty\bigr)$.

For any smooth function $v$ on $\R$, we have
\begin{align*}
	\Delta b^*v = b^*\bigl(-v'' - hv'\bigr).
\end{align*}
Now $\vf=\vf(t)=\exp(ht/2)$ satisfies $\vf'=h\vf/2$ and $\vf''=h^2\vf/4$.
With $\mu=\vf^2{\rm d}t$, the unitary transformation
\begin{align*}
	\Phi \colon \ L^2(\R,\mu) \to L^2(\R), \qquad \Phi(v) = \vf v,
\end{align*}
yields
\begin{align*}
	(\Phi(v))'' = (\vf v)'' = \vf\biggl(\frac{h^2}4v + hv' + v''\biggr),
\end{align*}
and hence $\Phi$ intertwines the diffusion operator $Lv=-(v''+hv')$ in $L^2(\R,\mu)$
with the Schr\"odinger operator $Sw=-w''+h^2w/4$ in $L^2(\R)$.
Therefore, the spectra of $L$ and $S$ coincide,
where we recall that $H_{\ac}(S)=L^2(\R)$ with $\sigma(S)=\bigl[h^2/4,\infty\bigr)$.

We want to show that $\sigma(S)\subseteq\sigma(N)$.
In contrast to the case where the volume of the fibers of $b$ are of finite volume,
we cannot simply use pull backs $b^*v$ since we would lose square-integrability that way.
What we will do is to carefully cut off pull-backs.

\begin{Lemma}
Suppose that $-1\le K_M\le-a^2<0$ and that $\|\nabla R_M\|_\infty<\infty$.
Let $\chi_0\colon b^{-1}(0)\to\R$ be a non-zero $C^2$ function with compact support.
Extend $\chi_0$ to $N$ by letting $\chi=\chi_t$ be equal to~$F_{-t}^*\chi_0$ on $b^{-1}(t)$,
where $(F_t)_{t\in\R}$ denotes the flow of $X=\nabla b$.
Then $\chi$ is $C^2$ and $\nabla\chi$ and $\Delta\chi$ tend to zero uniformly as $t\to\infty$.
\end{Lemma}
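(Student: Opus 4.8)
The plan is to estimate $\nabla\chi$ and $\Delta\chi$ directly along a horosphere $b^{-1}(t)$ by tracking how the flow $(F_t)$ distorts the geometry of horospheres, and then feed in the curvature bounds to control the relevant Jacobi fields. Fix a point $p\in b^{-1}(0)$ in the support of $\chi_0$ and let $p_t = F_t(p)$. Since $\chi = F_{-t}^*\chi_0$ on $b^{-1}(t)$, the value $\chi(p_t) = \chi_0(p)$ is constant along the flow line, so $X\chi = \langle\nabla\chi, X\rangle = 0$; thus $\nabla\chi$ is tangent to the horospheres, and the same is true after we control second derivatives. The first step is therefore to compute the horospherical gradient: if $v\in T_p b^{-1}(0)$ and $J_v(t) = (dF_t)_p v$ is the corresponding variation field, then $\langle \nabla\chi(p_t), (dF_t)_p v\rangle = d\chi_0(v) = \langle\nabla\chi_0(p), v\rangle$. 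So $\nabla\chi(p_t) = (dF_t)_p^{-*}\nabla\chi_0(p)$, and $|\nabla\chi(p_t)| \le \|(dF_t)_p^{-1}\|\,|\nabla\chi_0(p)|$. The second step is to show $\|(dF_t)_p^{-1}\|$ decays like $e^{-at}$ (or at least tends to $0$): the field $J_v$ is a stable Jacobi field along the geodesic $t\mapsto p_t$, and under $-1\le K_M\le -a^2<0$ the standard Rauch/comparison estimates for stable Jacobi fields give $|J_v(t)|\ge |v|\,e^{at}$, hence $\|(dF_t)^{-1}\|\le e^{-at}$. This already yields $\|\nabla\chi\|_\infty \to 0$ uniformly, using compactness of $\supp\chi_0$ to bound $|\nabla\chi_0|$ and the $C^1$-norm of the distortion uniformly over the (compact) set of basepoints.

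For $\Delta\chi$, the plan is to use $\Delta\chi = \tr\nabla^2\chi$ and split the Hessian into the $XX$-component, the mixed $X$–horosphere components, and the pure horospherical part. Because $X\chi\equiv 0$, the $XX$-term is $\nabla^2\chi(X,X) = -\langle\nabla\chi,\nabla_X X\rangle = 0$ (geodesic flow lines), and the mixed terms involve $\nabla^2 b$ contracted with $\nabla\chi$, which is already $O(e^{-at})$ by the first part together with Observation~\ref{obs} bounding $\nabla^2 b$. The genuine work is the horospherical Laplacian of $\chi$ restricted to $b^{-1}(t)$: differentiating the relation $\nabla\chi(p_t) = (dF_t)^{-*}\nabla\chi_0(p)$ once more in horospherical directions produces terms in which a second derivative of $\chi_0$ is contracted with two copies of $(dF_t)^{-1}$ (hence $O(e^{-2at})$), plus terms in which a first derivative of $\chi_0$ is contracted with the derivative of the distortion operator $t\mapsto (dF_t)^{-1}$ along a horospherical direction. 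This last ingredient is where $\|\nabla R_M\|_\infty<\infty$ enters: the spatial derivative of the Jacobi-field flow is governed by a linear ODE whose coefficients are the curvature operator and its covariant derivative along $p_t$, so a uniformly bounded $\nabla R_M$ together with the already-established bound on $\|(dF_t)^{-1}\|$ and on $\nabla^2 b$ gives that the derivative of $(dF_t)^{-1}$ in horospherical directions is bounded by $C t\,e^{-at}$ or so, which still tends to $0$. Summing the trace over an orthonormal horospherical frame (whose cardinality is fixed, $=\dim M-1$) then gives $\Delta\chi \to 0$ uniformly.

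The main obstacle I expect is the second-derivative bound: one must differentiate the Jacobi-field flow $(dF_t)_p$ with respect to the basepoint $p$ inside a horosphere, which amounts to analyzing the variation equation of the Jacobi equation and showing the solution does not grow faster than the Jacobi fields themselves decay. Keeping this uniform in $t$ is exactly what forces the two curvature hypotheses: the pinching $-1\le K_M\le -a^2$ gives exponential contraction of $(dF_t)^{-1}$ and uniform two-sided control of $\nabla^2 b$ (so the ``shape operator'' of horospheres has bounded $C^0$-norm), and $\|\nabla R_M\|_\infty<\infty$ gives the $C^1$-control of the coefficients needed to bound the derivative of the flow. It is convenient to work in a $\nabla b$-parallel orthonormal frame along each flow line and to phrase the estimates for the shape operator $U = \nabla^2 b$ of the horospheres, which satisfies the matrix Riccati equation $U' + U^2 + R = 0$ along the flow, with $R$ the curvature operator in the frame; differentiating this Riccati equation in horospherical directions and invoking $\|\nabla R_M\|_\infty<\infty$ controls the spatial derivative of $U$, and from there the derivative of $(dF_t)^{-1}$. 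Once these two distortion estimates are in hand, assembling $\nabla\chi$ and $\Delta\chi$ and letting $t\to\infty$ is routine.
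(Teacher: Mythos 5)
Your estimate for $\nabla\chi$ coincides with the paper's: $\chi$ is constant along the flow lines, so $\nabla\chi$ is horospherical, and the exponential growth $|J(t)|\ge {\rm e}^{at}|J(0)|$ of the Jacobi fields $J(t)=({\rm d}F_t)v$ forced by $K_M\le -a^2$ (these are the \emph{unstable} fields, not stable ones as you call them) gives $|\nabla\chi_t|\le {\rm e}^{-at}|\nabla\chi_0|$. The real divergence is in the Laplacian estimate: the paper does not prove it, but invokes \cite[Section~6]{BP21b}, which yields $|\Delta\chi|\le c_0{\rm e}^{-at}$ on $b^{-1}(t)$; you instead sketch a direct argument. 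Your skeleton is right --- reduce to the horospherical Hessian of $\chi_0\circ F_{-t}$, which splits into $\nabla^2\chi_0$ contracted with two copies of ${\rm d}F_{-t}$ (size ${\rm e}^{-2at}$) plus ${\rm d}\chi_0$ contracted with the second derivative of the map $F_{-t}$ --- but the decisive bound on that second derivative is asserted rather than proved, and the justification offered in your second paragraph would fail as stated. Differentiating the Jacobi-field flow with respect to the basepoint in a horospherical direction means differentiating in a direction that is itself expanded (up to order ${\rm e}^{t}$, from $K_M\ge -1$) by the forward flow, so the inhomogeneous term in the variation equation has size of order ${\rm e}^{2t}$; crude variation of constants then only bounds the basepoint-derivative of ${\rm d}F_t$ by $C{\rm e}^{2t}$, hence the derivative of $({\rm d}F_t)^{-1}$ by $C{\rm e}^{(2-3a)t}$, which need not tend to zero when $a$ is small. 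Neither your claimed $Ct\,{\rm e}^{-at}$ nor even decay follows from that route.

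The mechanism you name in your final paragraph is the correct repair, and is essentially what \cite[Section~6]{BP21b} carries out: differentiate the Riccati equation $U'+U^2+R=0$ for the shape operator $U=\nabla^2b$ in horospherical directions; the positivity $U\ge a$ coming from the upper curvature bound makes the linearized equation dissipative (bounded $\|\nabla R_M\|_\infty$ alone would not suffice), so $\nabla U$ is uniformly bounded; then one works along the \emph{backward}, contracting flow, where ${\rm d}F_{-s}E$ satisfies a linear equation driven by $-U$, and its spatial derivative satisfies a contracting linear equation with source of size $O\bigl({\rm e}^{-2as}\bigr)$, giving a Hessian bound $O\bigl({\rm e}^{-at}\bigr)$ for $F_{-t}$ and hence $|\Delta\chi|=O\bigl({\rm e}^{-at}\bigr)$, in agreement with the cited reference. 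As written, however, your proposal leaves exactly this step --- the only hard one, and the one the paper delegates to \cite{BP21b} --- at the level of a plausibility argument, with an intermediate claim that is incorrect as derived; the smooth descent of all estimates to $N=\Gamma\backslash M$ (everything in sight is $\Gamma$-invariant) is a minor point you could also note.
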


\begin{proof}
Since $\chi$ is constant along the flow lines of $X$,
the gradient of $\chi$ is tangential to the fibers of $b^{-1}(t)$ of $b$.
By the upper curvature bound $-a^2$, unstable Jacobi field $J$ grow at least exponentially,
\begin{align*}
	|J(t)| \ge {\rm e}^{at} |J(0)| \qquad\text{for all} \quad t>0.
\end{align*}
Therefore,
\begin{align*}%\label{enab}
	|\nabla\chi_t| \le {\rm e}^{-at} |\nabla\chi_0| \qquad\text{for all}\quad t>0,
\end{align*}
which implies the first assertion.

For the estimate of the Laplacian, we refer to~\cite[Section 6]{BP21b}.
The situation there is that of a Hadamard manifold $X$ with pinched negative sectional curvature
and uniformly bounded covariant derivative of its curvature tensor,
a convex domain $C$ in $X$,
and a function obtained by extending a given function on $\partial C$
to $X\setminus C$ constantly along minimizing geodesics to $C$.
In our situation, $X$ corresponds to $M$, $C$ to the horoball $b^{-1}((-\infty,0])$,
and the given function to $\chi_0$.
A short look at the arguments in~\cite[Section 6]{BP21b} shows
that they also apply in the corresponding situation in $N$.
The estimate obtained in~\cite{BP21b} is that
\begin{align*}
	|\Delta\chi| \le c_0{\rm e}^{-at} \qquad\text{over} \quad b^{-1}(t)\quad \text{for all} \quad t>0,
\end{align*}
where $c_0$ is a constant, which depends on bounds for $K_M$, $\nabla R_M$, $\nabla\chi_0$, and $\nabla^2\chi_0$;
compare with~\cite[formula~(6.4) and end of Section~6]{BP21b}.
\end{proof}

\begin{proof}[End of proof of Theorem~\ref{onlyacn}]
Let $\lambda\in\sigma(S)$ and $\ve>0$.
Choose a non-vanishing $w\in C^\infty_{\rm c}(\R)$ such that $\|(S-\lambda)w\|_2\le\ve\|w\|_2$.
By shifting $w$ along $\R$ if necessary, we can assume that $\supp w\subseteq[t,\infty)$,
where $t>0$ is such that $|\Delta\chi| < \ve$ on $b^{-1}([t,+\infty))$.
Note that shifting $w$ is legitimate since $Sw$ is only shifted accordingly.
Then $v=w/\vf$ also has support in $[t,\infty)$ and satisfies $\|(L-\lambda)v\|_2\le\ve\|v\|_2$,
where now the $L^2$-norm is taken with respect to $\mu=\vf^2{\rm d}t$.
Then
\begin{align*}
	\Delta(\chi b^*v)
	= (\Delta\chi)b^*v - 2\langle\nabla\chi,\nabla b^*v\rangle + \chi\Delta b^*v
	= (\Delta\chi)b^*v + \chi b^*Lv
\end{align*}
since $\nabla\chi$ is perpendicular to $X$ and $\nabla b^*v$ is collinear with $X$
and since $b^*$ intertwines $\Delta$ with $L$.

Since $b$ is a Riemannian submersion and the flow maps $F_s$ of $X$ induces
a diffeomorphism of~$b^{-1}(0)$ with $b^{-1}(s)$,
for all $s\in\R$ and with respective Jacobian $\vf^2(s)={\rm e}^{hs}$.
Thus we identify~$x\in N$ with $(y,s)\in b^{-1}(0)\times\R$, where $b(x)=s$ and $F_s(y)=x$,
and get
\begin{align*}%\label{n1}
	\|\chi b^*v\|_2^2
	&= \int \chi^2(x)(b^*v(x))^2{\rm d}x \\
	&= \iint_{b^{-1}(0)\times\R}\chi_0^2(y)v^2(s)\vf^2(s){\rm d}y{\rm d}s \\
	&= \|\chi_0\|_2^2 \int_\R v^2(s)\vf^2(s){\rm d}s
	= \|\chi_0\|_2^2 \|v\|_2^2,
\end{align*}
where $\|\chi_0\|_2$ denotes the $L^2$-norm of $\chi_0$ in $L^2\bigl(b^{-1}(0)\bigr)$.
Likewise, by the choice of $t$,
\begin{align*}%\label{n2}
	\|(\Delta\chi)b^*v\|_2^2
	\le \ve^2|\supp\chi_0| \|v\|_2^2.
\end{align*}
and hence
\begin{align*}%\label{n3}
	\|(\Delta\chi)b^*v\|_2^2
	\le \ve^2\frac{|\supp\chi_0|}{ \|\chi_0\|_2^2}\|\chi b^*v\|_2^2
	= \ve^2c_1^2\|\chi b^*v\|_2^2,
\end{align*}
where $c_1$ depends on the choice of $\chi_0$.
Hence we obtain
\begin{align*}%\label{n4}
	\|(\Delta-\lambda)(\chi b^*v)\|_2^2
	&\le 2\|\chi b^*(L-\lambda)v\|_2^2 + 2\ve^2c_1^2\|\chi b^*v\|_2^2 \\
	&= 2\|\chi_0\|_2^2 \|(L-\lambda)v\|_2^2 + 2\ve^2c_1^2\|\chi b^*v\|_2^2 \\
	&= \ve^2c_2^2\|\chi b^*v\|_2^2,
\end{align*}
where $c_2$ depends on the choice of $\chi_0$.
This implies that $\lambda\in\sigma(N)$.
\end{proof}

%%%%%%%%%%%%%%%%%%%%%%%%%%%%%%%%%%%%%%%%%%%%%%
\appendix
%%%%%%%%%%%%%%%%%%%%%%%%%%%%%%%%%%%%%%%%%%%%%%

%%%%%%%%%%%%%%%%%%%%%%%%%%%%%%%%%%%%%%%%%%%%%%
\section{Some spectral theory}\label{secspec}
%%%%%%%%%%%%%%%%%%%%%%%%%%%%%%%%%%%%%%%%%%%%%%

We include a short discussion of spectra of self-adjoint operators and of Riemannian manifolds.
The main aim is the description of the spectra of Riemannian products.
The latter is well-known and clear in the compact case,
but we could not single out a reference for the general case as presented in Appendix~\ref{subrie}.
The product case is important in our discussion of homogeneous Hadamard manifolds,
where it is used for the reduction to the case of vanishing Euclidean factor,
which keeps the technicalities of the discussion at a moderate level; cf.\ Section~\ref{sechom}.

%%%%%%%%%%%%%%%%%%%%%%%%%%%%%%%%%%%%%%%%%%%%%%
\subsection{Convolution of measures}%\label{submeas}
%%%%%%%%%%%%%%%%%%%%%%%%%%%%%%%%%%%%%%%%%%%%%%
Recall that a finite Borel measure $\nu$ on $\R$ is uniquely a sum of two finite Borel measures $\nu_{\ac}$ and $\nu_{\s}$ on $\R$,
where $\nu_{\ac}$ is absolutely continuous and $\nu_{s}$ is singular with respect to Lebesgue measure $\lambda$.
Then $\R$ is the disjoint, but not unique, union of a Lebesgue measurable set $Y_{\ac}$ and a Lebesgue nullset $Y_{\s}$ such that $\nu_{\s}$ is concentrated on $Y_{\s}$,
that is, $\nu_{\s}(\R\setminus Y_{\s})=0$.
The countable set $Y_{\p}=\{y\in\R\mid\nu(y)>0\}$ is a subset of $Y_{\s}$
and $\nu_{\s}=\nu_{\p}+\nu_{\sc}$,
where $\nu_{\p}$ is concentrated on~$Y_{\p}$ and $\nu_{\sc}$ on~$Y_{\sc}=Y_{\s}\setminus Y_{\p}$.

The \emph{convolution} $\nu_1*\nu_2$ of finite Borel measures $\nu_1$ and $\nu_2$ on $\R$ is defined to be the push-forward of the finite Borel measure $\nu_1\otimes\nu_2$ on $\R^2$ under the map
\begin{align*}
	\R^2 \to \R, \qquad (x_1,x_2)\mapsto x_1+x_2.
\end{align*}
In other words, for any Borel set $B\subseteq\R$,
\begin{align*}%\label{con}
	(\nu_1*\nu_2)(B) &= (\nu_1\otimes\nu_2)(\{(x_1,x_2)\mid x_1+x_2\in B\}) \\
	&= \iint\chi_B(x_1+x_2)\, {\rm d}\nu_1(x_1){\rm d}\nu_2(x_2).
\end{align*}
Clearly $\nu_1*\nu_2=\nu_2*\nu_1$.
Furthermore, if $\nu_1=g_1\lambda$ with a $\lambda$-integrable $g_1$ on $\R$, then
\begin{align}
	\nu_1*\nu_2 &= g\lambda \qquad\text{with $\lambda$-integrable function}\quad g(y)=\int g_1(y-z)\,{\rm d}\nu_2(z).\label{con1}
\intertext{In particular, if in addition $\nu_2=g_2\lambda$ with a $\lambda$-integrable $g_2$ on $\R$, then}
	\nu_1*\nu_2 &= g\lambda \qquad\text{with $\lambda$-integrable function}\quad g(y)=\int g_1(y-z)g_2(z)\,{\rm d}\lambda(z).\label{con2}
\end{align}
Notice that the latter $g=g_1*g_2$, the usual convolution of $g_1$ and $g_2$.

The support of a measure $\nu$ on $\R$ is the closed set of points $y\in\R$ such that $\nu(U)>0$
for any neighborhood $U$ of $y$.

\begin{Proposition}\label{consupp}
The convolution $\nu=\nu_1*\nu_2$ has support
\begin{align*}
	\supp\nu = \overline{\supp\nu_1+\supp\nu_2}.
\end{align*}
If $\supp\nu_1$ and $\supp\nu_2$ are bounded from below,
then $\supp\nu = \supp\nu_1+\supp\nu_2$.
\end{Proposition}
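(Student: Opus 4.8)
The plan is to prove both inclusions $\supp\nu \subseteq \overline{\supp\nu_1 + \supp\nu_2}$ and $\overline{\supp\nu_1+\supp\nu_2} \subseteq \supp\nu$, and then to sharpen the equality when the supports are bounded below.

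First I would establish $\supp\nu \subseteq \overline{\supp\nu_1 + \supp\nu_2}$. Suppose $y \notin \overline{\supp\nu_1+\supp\nu_2}$. Then there is an $\varepsilon>0$ with $(y-2\varepsilon, y+2\varepsilon) \cap (\supp\nu_1 + \supp\nu_2) = \varnothing$. I claim that for $(x_1,x_2)$ with $x_1 + x_2 \in (y-\varepsilon, y+\varepsilon)$, at least one of the statements $x_1 \notin \supp\nu_1$, $x_2 \notin \supp\nu_2$ must hold; otherwise $x_1+x_2$ would lie in $\supp\nu_1+\supp\nu_2$, contradicting the choice of $\varepsilon$. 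Hence the set $\{(x_1,x_2)\mid x_1+x_2\in(y-\varepsilon,y+\varepsilon)\}$ is contained in $((\R\setminus\supp\nu_1)\times\R) \cup (\R\times(\R\setminus\supp\nu_2))$, which is a $\nu_1\otimes\nu_2$-nullset since $\nu_i(\R\setminus\supp\nu_i)=0$. Therefore $\nu(y-\varepsilon,y+\varepsilon)=0$, so $y\notin\supp\nu$.

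Next I would establish the reverse inclusion. Since $\supp\nu$ is closed, it suffices to show $\supp\nu_1+\supp\nu_2 \subseteq \supp\nu$. Take $x_i \in \supp\nu_i$ and let $U$ be any neighborhood of $y=x_1+x_2$; pick $\varepsilon>0$ with $(x_1-\varepsilon,x_1+\varepsilon)+(x_2-\varepsilon,x_2+\varepsilon)\subseteq U$. Then by definition of support $\nu_i(x_i-\varepsilon,x_i+\varepsilon)>0$, so $(\nu_1\otimes\nu_2)\big((x_1-\varepsilon,x_1+\varepsilon)\times(x_2-\varepsilon,x_2+\varepsilon)\big)>0$, and since the sum of any point in this rectangle lies in $U$, we get $\nu(U)>0$. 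Hence $y\in\supp\nu$, proving $\overline{\supp\nu_1+\supp\nu_2}\subseteq\supp\nu$ and thus the first assertion.

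Finally, for the sharpened statement, assume $\supp\nu_1,\supp\nu_2\subseteq[c,\infty)$ for some $c$. I would show $\supp\nu_1+\supp\nu_2$ is already closed, so that the closure in the formula is superfluous. Let $y_n = a_n + b_n$ with $a_n\in\supp\nu_1$, $b_n\in\supp\nu_2$, and $y_n\to y$. Since $a_n\ge c$ and $b_n\ge c$, we have $c\le a_n = y_n - b_n \le y_n - c$, so $(a_n)$ is bounded; passing to a subsequence, $a_n\to a$ with $a\in\supp\nu_1$ (the support is closed), and then $b_n = y_n-a_n \to y-a =: b\in\supp\nu_2$. Thus $y = a+b\in\supp\nu_1+\supp\nu_2$, which is therefore closed and equals $\supp\nu$. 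The only mildly delicate point is the nullset argument in the first inclusion, making sure the decomposition of the preimage strip into the two product-type sets is correct; everything else is routine point-set topology.
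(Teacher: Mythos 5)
Your proof is correct. Note that the paper itself states Proposition~\ref{consupp} without proof, treating it as a standard fact about convolutions of finite Borel measures, so there is no argument in the text to compare against; what you give is the natural direct argument from the definition of $\nu_1*\nu_2$ as the push-forward of $\nu_1\otimes\nu_2$ under addition. Both inclusions are handled properly: the containment $\supp\nu\subseteq\overline{\supp\nu_1+\supp\nu_2}$ via the covering of the strip $\{x_1+x_2\in(y-\ve,y+\ve)\}$ by the two product-type nullsets (using $\nu_i(\R\setminus\supp\nu_i)=0$, which holds for Borel measures on the second-countable space $\R$, and finiteness of the measures so the product sets really are null), and the reverse containment via positivity of the product measure on a small rectangle. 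The compactness argument showing that the sum of two closed sets bounded from below is closed is also correct; the only cosmetic remark is that your $2\ve$ in the first inclusion is redundant, and if the lower bounds differ you should take $c$ to be their minimum, which is immediate.
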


For Borel measures $\mu,\nu$ on $\R$,
we write $\mu\prec\nu$ to indicate that $\mu$ is absolutely continuous with respect to $\nu$.

\begin{Proposition}\label{consum}
The components of the convolution $\nu=\nu_1*\nu_2$ satisfy
\begin{gather*}
	\nu_{\p} = \nu_{1\p}*\nu_{2\p} \qquad\text{with}\quad Y_{\p}=Y_{1\p}+Y_{2\p} ; \\
	\nu_{\ac} \succ \nu_{1\ac}*\nu_{2\ac} + \nu_{1\ac}*\nu_{2\s} + \nu_{1\s}*\nu_{2\ac}.
\end{gather*}
In particular, if $\nu_1$ or $\nu_2$ is absolutely continuous with respect to $\lambda$, then also $\nu$.
\end{Proposition}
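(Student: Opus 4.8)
The plan is to analyze the convolution $\nu = \nu_1 * \nu_2$ by decomposing each factor into its absolutely continuous and singular parts, $\nu_i = \nu_{i\ac} + \nu_{i\s}$, and then expanding $\nu_1 * \nu_2$ bilinearly into four pieces:
\begin{align*}
	\nu = \nu_{1\ac}*\nu_{2\ac} + \nu_{1\ac}*\nu_{2\s} + \nu_{1\s}*\nu_{2\ac} + \nu_{1\s}*\nu_{2\s}.
\end{align*}
The first three of these are absolutely continuous with respect to Lebesgue measure $\lambda$: indeed, whenever one factor has the form $g_1\lambda$ with $g_1$ integrable, \eqref{con1} shows the convolution equals $g\lambda$ with $g$ integrable. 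Hence their sum is $\prec\lambda$ and therefore contributes only to the $\ac$ part of $\nu$, which gives the stated lower bound $\nu_{\ac} \succ \nu_{1\ac}*\nu_{2\ac} + \nu_{1\ac}*\nu_{2\s} + \nu_{1\s}*\nu_{2\ac}$; the ``in particular'' clause is the special case where $\nu_{1\s}$ or $\nu_{2\s}$ vanishes, so all four terms collapse into the $\ac$ family. What remains is to understand $\nu_{1\s}*\nu_{2\s}$, and in particular to extract its pure-point part and to verify it produces no new point masses beyond those coming from the point parts of the factors.

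For the point-part statement, first I would compute the atoms of $\nu = \nu_1 * \nu_2$ directly from the definition: for $y\in\R$,
\begin{align*}
	\nu(\{y\}) = (\nu_1\otimes\nu_2)(\{(x_1,x_2) : x_1+x_2 = y\}) = \int \nu_1(\{y - x_2\})\, {\rm d}\nu_2(x_2).
\end{align*}
The integrand $x_2 \mapsto \nu_1(\{y-x_2\})$ is nonzero only when $y - x_2 \in Y_{1\p}$, i.e.\ $x_2 \in y - Y_{1\p}$, a countable set; so the integral reduces to the sum $\sum_{x_2 \in Y_{2\p}} \nu_1(\{y-x_2\})\nu_2(\{x_2\})$, which is positive exactly when $y \in Y_{1\p} + Y_{2\p}$. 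This simultaneously shows $Y_\p = Y_{1\p} + Y_{2\p}$ and that the atomic measure $\nu_\p := \sum_y \nu(\{y\})\delta_y$ equals $\nu_{1\p} * \nu_{2\p}$, once one checks (by Tonelli, since everything is nonnegative) that summing the above over all $y$ recovers $\nu_{1\p}*\nu_{2\p}$ as a measure. Care is needed that $Y_{1\p}+Y_{2\p}$ is genuinely the atom set and not larger: any $y$ outside it has $\nu(\{y\}) = 0$ by the computation above.

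For $\supp\nu = \overline{\supp\nu_1 + \supp\nu_2}$: the inclusion $\supp\nu \subseteq \overline{\supp\nu_1+\supp\nu_2}$ follows because $\nu_1\otimes\nu_2$ is supported on $\supp\nu_1 \times \supp\nu_2$ and the addition map is continuous, so $\nu$ is supported on the closure of the image. Conversely, if $y = s_1 + s_2$ with $s_i \in \supp\nu_i$, then for any $\ve>0$ the box $(s_1-\ve/2, s_1+\ve/2)\times(s_2-\ve/2,s_2+\ve/2)$ has positive $\nu_1\otimes\nu_2$-measure and maps into $(y-\ve, y+\ve)$, so $\nu((y-\ve,y+\ve))>0$ and $y \in \supp\nu$; taking closures gives the reverse inclusion. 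When both $\supp\nu_i$ are bounded below, $\supp\nu_1 + \supp\nu_2$ is already closed (a sum of two closed subsets of $\R$ each bounded below is closed — a short argument via extracting convergent subsequences from the bounded-below coordinates), so the closure is superfluous.

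The main obstacle I expect is not any single hard step but rather the bookkeeping around the singular--singular term: one must be careful that $\nu_{1\s}*\nu_{2\s}$ may itself split into absolutely continuous, singular continuous, and pure point parts in a way not controlled by the factors (this is why the proposition only asserts an inequality $\succ$ for the $\ac$ part and says nothing sharp about $\nu_{\sc}$), and that the only clean conclusion about atoms is the one obtained by the direct atom-counting computation above rather than by any naive ``$\s * \s = \s$'' heuristic. Getting the measure-theoretic Tonelli/Fubini justifications right for the possibly infinite countable sums is the part where one has to be slightly careful, though all terms are nonnegative so no integrability subtleties arise.
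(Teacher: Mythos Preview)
Your argument is correct and follows exactly the paper's approach: the paper computes the atom masses via the same formula $(\nu_1*\nu_2)(\{y\})=\sum_{y_1+y_2=y}\nu_1(\{y_1\})\nu_2(\{y_2\})$ and deduces the $\ac$ assertion directly from \eqref{con1} and \eqref{con2}, just as you do after the bilinear expansion. Your paragraph on $\supp\nu=\overline{\supp\nu_1+\supp\nu_2}$ belongs to Proposition~\ref{consupp}, not to the statement at hand, and can be dropped here.
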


In general,
we cannot exclude that the absolutely continuous component of the convolution of $\lambda$-singular measures vanishes.
Thus $\nu_{1\s}*\nu_{2\s}$ may contribute to $\nu_{\ac}$.

\begin{proof}[Proof of Proposition~\ref{consum}]
The assertion about $\nu_{\p}$ is clear from
\begin{align*}
	(\nu_1*\nu_2)(y) = \sum_{y_1+y_2=y}\nu_1(y_1)\nu_2(y_2).
\end{align*}
The assertion about $\nu_{\ac}$ follows immediately from \eqref{con1} and \eqref{con2}.
\end{proof}

%%%%%%%%%%%%%%%%%%%%%%%%%%%%%%%%%%%%%%%%%%%%%%
\subsection{Decomposition of spectra}%\label{subspec}
%%%%%%%%%%%%%%%%%%%%%%%%%%%%%%%%%%%%%%%%%%%%%%
Let $A$ be a self-adjoint operator on a (real or complex, separable) Hilbert space $H$.
A second such operator $A'$ on a Hilbert space $H'$ is viewed as equivalent to $A$
if there is an orthogonal respectively unitary transformation $T\colon H\to H'$
such that $T\dom A=\dom A'$ and $TA=A'T$ on $\dom A$.
We say that such an operator $T$ \emph{intertwines} $A$ and $A'$.
The spectral theorem in its multiplicative version says that there is a measure space $X$
with a finite measure $\mu$ and a~measurable real function $f$ on $X$
such that $A$ is equivalent to the multiplication operator~$A_f$ on $L^2(X,\mu)$, $A_f\vf=f\vf$,
with domain $\dom A_f=\{\vf\in L^2(X)\mid f\vf\in L^2(X)\}$.
Note that neither~$X$ nor $\mu$, given $X$, are unique.

Suppose that $T$ intertwines $A$ with such a multiplication operator $A_f$.
Consider the push-forward $\nu=f_*\mu$, a Borel measure on $\mathbb{R}$.
By the definition of $f_*\mu$,
\begin{align*}%\label{supess}
	\supp\nu = \essran f,
\end{align*}
the \emph{essential range of $f$}.
Whereas $\nu$ is not unique, not even the total mass of $\nu$,
the spectrum~$\sigma (A)$, the spectral projections $\pi_B$, and the spectral measures $\nu_u$, $u\in H$,
associated to $A$ are invariant respectively equivariant under $T$.
For that reason, it will be instructive to identify them in the case of $A_f$.

Recall that the \emph{resolvent set} $\rho(A)$ of $A$ consists of all $y\in\R$ or $y \in \C$, respectively, such that
\begin{align*}
	A-y \colon \ \dom A \to H
\end{align*}
is bijective.
By definition, the \emph{spectrum} $\sigma(A)$ is the complement of $\rho(A)$ in $\R$ or $\C$, respectively,
where we actually have $\sigma(A)\subseteq\R$, by the self-adjointness of $A$.
The \emph{point spectrum} ${\sigma_{\p}(A)\subseteq\sigma(A)}$ is the set of eigenvalues of $A$.

\begin{Proposition}\label{aaf}
In the notation further up,
\begin{align*}
	\sigma(A) = \sigma(A_f) = \essran f
	\qquad\text{and}\qquad
	\sigma_{\p}(A)=\sigma_{\p}(A_f)=\{y\in\R\mid\nu(y)>0\}.
\end{align*}
\end{Proposition}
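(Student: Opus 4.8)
The plan is to work directly with the multiplication operator $A_f$ on $L^2(X,\mu)$, since the statement reduces to it by the equivariance of spectrum, point spectrum, and spectral measures under the intertwining transformation $T$; the identity $\sigma(A)=\sigma(A_f)$ and $\sigma_{\p}(A)=\sigma_{\p}(A_f)$ is then immediate, and what remains is to compute $\sigma(A_f)$ and $\sigma_{\p}(A_f)$ intrinsically. First I would handle the resolvent set: for $y\notin\essran f$, the function $1/(f-y)$ is essentially bounded, so multiplication by it is a bounded inverse of $A_f-y$, giving $y\in\rho(A_f)$; conversely, if $y\in\essran f$, then for every $\ve>0$ the set $X_\ve=\{x\mid |f(x)-y|<\ve\}$ has positive $\mu$-measure, and any $0\ne\vf$ supported in $X_\ve$ satisfies $\|(A_f-y)\vf\|\le\ve\|\vf\|$, so $A_f-y$ has no bounded inverse and $y\in\sigma(A_f)$. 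This proves $\sigma(A)=\sigma(A_f)=\essran f=\supp\nu$.

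Next I would treat the point spectrum. If $\nu(\{y\})=\mu(f^{-1}(y))>0$, then the characteristic function $\chi_{f^{-1}(y)}$ is a nonzero element of $L^2(X,\mu)$ on which $A_f$ acts as multiplication by $y$, so $y$ is an eigenvalue. Conversely, if $y$ is an eigenvalue with eigenfunction $\vf\ne0$, then $(f-y)\vf=0$ $\mu$-almost everywhere, so $\vf$ vanishes off $f^{-1}(y)$; since $\vf\ne0$ in $L^2$, this forces $\mu(f^{-1}(y))>0$, i.e.\ $\nu(\{y\})>0$. Hence $\sigma_{\p}(A_f)=\{y\in\R\mid\nu(y)>0\}$, which by equivariance equals $\sigma_{\p}(A)$.

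I do not expect a serious obstacle here; the statement is essentially a restatement of the multiplicative spectral theorem, and the only care needed is the standard measure-theoretic observation that positive measure of $X_\ve$ (resp.\ of $f^{-1}(y)$) yields an honest nonzero $L^2$ element to test against. The mildly delicate point is making sure the Weyl-sequence-type argument in the $y\in\essran f$ direction is phrased correctly when $\mu(X_\ve)$ could be small but positive: one normalizes the test function, and the estimate $\|(A_f-y)\vf\|_2\le\ve\|\vf\|_2$ holds regardless of the size of $X_\ve$, which is exactly what rules out a bounded inverse. Everything else is bookkeeping with the definitions of $\essran f$ and $\supp f_*\mu$ already recorded in the excerpt.
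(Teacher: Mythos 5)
Your argument is correct. The paper states Proposition~\ref{aaf} without proof, treating it as a standard consequence of the multiplicative version of the spectral theorem, and yours is exactly the standard argument: unitary equivalence reduces everything to $A_f$; essential boundedness of $1/(f-y)$ for $y\notin\essran f$ and the normalized test functions supported on $\{|f-y|<\ve\}$ give $\sigma(A_f)=\essran f$; and the pointwise identity $(f-y)\vf=0$ $\mu$-a.e.\ characterizes eigenvalues as the atoms of $\nu=f_*\mu$. The only details worth making explicit are that finiteness of $\mu$ ensures $\chi_{X_\ve}$ and $\chi_{f^{-1}(y)}$ lie in $L^2(X,\mu)$, and that since $A_f$ is self-adjoint (hence closed), bijectivity of $A_f-y$ already forces a bounded inverse, so your estimate $\|(A_f-y)\vf\|_2\le\ve\|\vf\|_2$ does rule out $y\in\rho(A_f)$; both points are implicitly covered by what you wrote.
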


\begin{Proposition}%\label{afpro}
The spectral projection of $A_f$ associated to a Borel set $B\subseteq\R$
is given by multiplication in $L^2(X)$ with $f^*\chi_B=\chi_B\circ f$,
where $\chi_B$ denotes the characteristic function of $B$.
\end{Proposition}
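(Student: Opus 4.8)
The plan is to recognise the assignment $g\mapsto(\text{multiplication by }g\circ f)$ as the bounded Borel functional calculus of $A_f$, the assertion then being the special case $g=\chi_B$. Concretely, for a bounded Borel function $g$ on $\R$ let $\Phi(g)$ be the operator of multiplication by $g\circ f$ on $L^2(X)$; it is bounded, with $\|\Phi(g)\|\le\sup_{\essran f}|g|$. One checks directly that $\Phi$ is a unital $*$-homomorphism from the algebra of bounded Borel functions on $\R$ to the bounded operators on $L^2(X)$, that $\Phi(\mathrm{id}_\R)=A_f$, and that $g_n\to g$ pointwise with $\sup_n\|g_n\|_\infty<\infty$ forces $\Phi(g_n)\to\Phi(g)$ in the strong operator topology, the last point by dominated convergence applied to $g_n\circ f\to g\circ f$ on $X$. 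Since these properties characterise the bounded Borel functional calculus $g\mapsto g(A_f)$ of the self-adjoint operator $A_f$, we get $\Phi(g)=g(A_f)$ for every bounded Borel $g$; in particular the spectral projection $E_B=\chi_B(A_f)$ equals $\Phi(\chi_B)$, i.e.\ multiplication by $\chi_B\circ f$.

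Alternatively, one may argue by hand via Stone's formula, which keeps things self-contained. For $z\in\C\setminus\R$ the resolvent $(A_f-z)^{-1}$ is multiplication by $(f-z)^{-1}$, since that function is $\mu$-essentially bounded and manifestly inverts $A_f-z$. Fixing $a<b$ with $\nu(\{a\})=\nu(\{b\})=0$, Stone's formula gives, in the strong operator topology,
\begin{align*}
	E_{(a,b)} = \lim_{\ve\downarrow0}\frac1{2\pi{\rm i}}\int_a^b\bigl((A_f-\lambda-{\rm i}\ve)^{-1}-(A_f-\lambda+{\rm i}\ve)^{-1}\bigr)\,{\rm d}\lambda ,
\end{align*}
and, inserting the explicit resolvent and using Fubini, the right-hand side is multiplication by the function
\begin{align*}
	x\longmapsto \lim_{\ve\downarrow0}\frac1\pi\int_a^b\frac{\ve}{(f(x)-\lambda)^2+\ve^2}\,{\rm d}\lambda ,
\end{align*}
a Poisson-kernel average which is bounded by $1$ and tends to $\chi_{(a,b)}(f(x))$ whenever $f(x)\notin\{a,b\}$; by dominated convergence this limit equals $\chi_{(a,b)}\circ f$ in $L^2(X)$. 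Hence $E_{(a,b)}$ is multiplication by $\chi_{(a,b)}\circ f$ for all such intervals. Both $B\mapsto E_B$ and $B\mapsto P_B$, where $P_B$ denotes multiplication by $\chi_B\circ f$, are countably additive projection-valued measures, and the intervals above form a $\pi$-system generating the Borel $\sigma$-algebra of $\R$, so $E_B=P_B$ for every Borel set $B$.

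Neither route is genuinely difficult; the single delicate point, arising only in the second one, is the interchange of limit and integration in Stone's formula together with the treatment of the endpoint set $\{a,b\}$, which is why I take the endpoints outside the atoms of $\nu$ before extending by the uniqueness of the projection-valued measure. The first route sidesteps this entirely, at the price of invoking the uniqueness of the Borel functional calculus for the self-adjoint operator $A_f$.
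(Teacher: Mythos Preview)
The paper does not supply a proof of this proposition; it is stated as a standard fact in the appendix and used without justification. Your proposal therefore does not conflict with anything in the paper, and both routes you give are valid ways to establish the result.

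One minor technical quibble with your first route: you anchor the functional calculus by writing $\Phi(\mathrm{id}_\R)=A_f$, but $\mathrm{id}_\R$ is not a bounded Borel function, so strictly speaking it lies outside the bounded functional calculus you are characterising. The clean fix is to replace this condition by $\Phi\bigl((\,\cdot\,-z)^{-1}\bigr)=(A_f-z)^{-1}$ for $z\in\C\setminus\R$, which you have already observed to be multiplication by $(f-z)^{-1}$; this is the usual normalisation for unbounded self-adjoint operators and the uniqueness argument then goes through unchanged. Your second route via Stone's formula avoids this issue entirely and is fully self-contained as written.
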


For any $u\in H$,
the \emph{spectral measure} $\nu_u$ on $\R$ associated to $u$ (and $A$) is given by
\begin{align*}%\label{specpro}
	\nu_u(B) = \langle u,\pi_Bu\rangle_2
\end{align*}
for any Borel set $B\subseteq\R$,
where $\pi_B\colon H\to H$ denotes the spectral projection associated to $A$.
Since $T$ intertwines the spectral measures associated to $A$ and $A_f$,
\begin{align*}%\label{specprof}
	\nu_u(B) = \nu_\vf(B) = \langle\vf,(f^*\chi_B)\vf\rangle_2 = \int_X (f^*\chi_B)\bar\vf\vf\, {\rm d}\mu
\end{align*}
for $\vf=Tu\in L^2(X)$ and any Borel set $B\subseteq\R$.

Let now again $\nu=f_*\mu$, decompose $\nu=\nu_{\ac}+\nu_{\p}+\nu_{\sc}$,
and write $\R=Y_{\ac}\cup Y_{\s}$ and $Y_{\s}=Y_{\p}\cup Y_{\sc}$ as disjoint unions as further up.
Then we have, respectively set,
\begin{gather*}%\label{specaf}
	\sigma(A) = \sigma(A_f) = \essran f,\\
	\sigma_{\ac}(A) =\sigma_{\ac}(A_f) = Y_{\ac}\cap\essran f,\\
	\sigma_{\s}(A) = \sigma_{\s}(A_f) = Y_{\s}\cap\essran f, \\
	\sigma_{\p}(A) = \sigma_{\p}(A_f) = Y_{\p}\cap\essran f = Y_{\p}, \\
	\sigma_{\sc}(A) = \sigma_{\sc}(A_f) = Y_{\sc}\cap\essran f.
\end{gather*}
We call the latter four the \emph{absolutely continuous, singular, point} and \emph{singular continuous}
parts of the spectrum of $A$ or $A_f$.

\begin{Proposition}%\label{specfdec}
Let $L^2(X)_{\ac}$, $L^2(X)_{\s}$, $L^2(X)_{\p}$, $L^2(X)_{\sc}$ be the subspaces of $\vf\in L^2(X)$
vanishing outside $f^{-1}(Y_{\ac})$, $f^{-1}(Y_{\s})$, $f^{-1}(Y_{\p})$, $f^{-1}(Y_{\sc})$, respectively.
Then we have orthogonal decompositions
\begin{align*}
	L^2(X) = L^2(X)_{\ac}\oplus L^2(X)_{\s}
	\qquad\text{and}\qquad
	L^2(X)_{\s} = L^2(X)_{\p}\oplus L^2(X)_{\sc}.
\end{align*}
Moreover, besides zero, these subspaces consist precisely of those $\vf\in L^2(X)$
such that the associated spectral measures $\nu_\vf$ are absolutely continuous and singular with respect to the Lebesgue measure
respectively are concentrated on $\sigma_{\p}(A_f)$ and $\sigma_{\sc}(A_f)$.
\end{Proposition}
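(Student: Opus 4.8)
The plan is to reduce everything to bookkeeping with the push-forward measure $\nu_\vf=f_*\bigl(|\vf|^2\mu\bigr)$, which is exactly what the formula $\nu_\vf(B)=\int_X(f^*\chi_B)\,\bar\vf\vf\,{\rm d}\mu$ recorded above says. First I would settle the two orthogonal decompositions. Since $\R=Y_{\ac}\cup Y_{\s}$ and $Y_{\s}=Y_{\p}\cup Y_{\sc}$ are disjoint unions and $f$ is defined on all of $X$, the sets $f^{-1}(Y_{\ac})$ and $f^{-1}(Y_{\s})$ partition $X$, and $f^{-1}(Y_{\p})$ and $f^{-1}(Y_{\sc})$ partition $f^{-1}(Y_{\s})$. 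Multiplication by the characteristic function of each of these sets is therefore an orthogonal projection on $L^2(X)$; a function vanishing outside one set is orthogonal to any function vanishing outside a disjoint one; and every $\vf\in L^2(X)$ splits as $\vf=\vf\chi_{f^{-1}(Y_{\ac})}+\vf\chi_{f^{-1}(Y_{\s})}$, with the analogous splitting for $\vf\in L^2(X)_{\s}$. This gives both decompositions.

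For the characterizations I would use the elementary fact that, for a Borel set $Z\subseteq\R$, a function $\vf\in L^2(X)$ vanishes $\mu$-a.e.\ outside $f^{-1}(Z)$ if and only if $|\vf|^2\mu$ is concentrated on $f^{-1}(Z)$, if and only if $\nu_\vf=f_*\bigl(|\vf|^2\mu\bigr)$ is concentrated on $Z$. Taking $Z=Y_{\p}$ and recalling $\sigma_{\p}(A_f)=Y_{\p}$ from the identifications above settles $L^2(X)_{\p}$ at once. For $L^2(X)_{\sc}$ take $Z=Y_{\sc}$: since any $\nu$-nullset pulls back under $f$ to a $\mu$-nullset, hence to a $|\vf|^2\mu$-nullset, we have $\nu_\vf\ll\nu=f_*\mu$, and $\nu$ is concentrated on $\essran f=\supp\nu$, so $\nu_\vf$ is automatically concentrated on $\essran f$; therefore being concentrated on $Y_{\sc}$ is equivalent to being concentrated on $Y_{\sc}\cap\essran f=\sigma_{\sc}(A_f)$.

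The one step requiring care is the Lebesgue-type description of $L^2(X)_{\ac}$ and $L^2(X)_{\s}$. Writing $g=|\vf|^2$ and splitting $g\mu=g\mu|_{f^{-1}(Y_{\ac})}+g\mu|_{f^{-1}(Y_{\s})}$, I would push forward to obtain
\begin{align*}
\nu_\vf=f_*\bigl(g\mu|_{f^{-1}(Y_{\ac})}\bigr)+f_*\bigl(g\mu|_{f^{-1}(Y_{\s})}\bigr),
\end{align*}
and check that this is the Lebesgue decomposition of $\nu_\vf$. The first summand is concentrated on $Y_{\ac}$, on which $\nu$ equals its absolutely continuous part $\nu_{\ac}$; since $f_*\bigl(g\mu|_{f^{-1}(Y_{\ac})}\bigr)\ll\nu|_{Y_{\ac}}=\nu_{\ac}\ll\lambda$, it is absolutely continuous with respect to $\lambda$. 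The second summand is concentrated on the Lebesgue nullset $Y_{\s}$, hence is singular. By uniqueness of the Lebesgue decomposition, $(\nu_\vf)_{\ac}=f_*\bigl(g\mu|_{f^{-1}(Y_{\ac})}\bigr)$ and $(\nu_\vf)_{\s}=f_*\bigl(g\mu|_{f^{-1}(Y_{\s})}\bigr)$. Since the push-forward of a nonnegative measure has the same total mass, $(\nu_\vf)_{\s}=0$ if and only if $g\mu|_{f^{-1}(Y_{\s})}=0$, if and only if $\vf$ vanishes a.e.\ outside $f^{-1}(Y_{\ac})$; that is, $\vf\in L^2(X)_{\ac}$ if and only if $\nu_\vf\ll\lambda$, and symmetrically $\vf\in L^2(X)_{\s}$ if and only if $\nu_\vf\perp\lambda$. (In all cases the zero function has zero spectral measure, which trivially satisfies every condition, so including it is harmless.)

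I expect the main, and fairly mild, obstacle to be precisely this identification of the two push-forward pieces with the summands of the Lebesgue decomposition of $\nu_\vf$: it uses in an essential way that $Y_{\ac}$ carries only the absolutely continuous part of $\nu$ and that $Y_{\s}$ is a $\lambda$-nullset, which are exactly the defining properties of the partition $\R=Y_{\ac}\cup Y_{\s}$ recalled before the proposition. The remaining assertions are formal once the formula $\nu_\vf=f_*\bigl(|\vf|^2\mu\bigr)$ and the relation $\nu_\vf\ll f_*\mu$ are in hand.
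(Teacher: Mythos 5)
Your proof is correct. For comparison: the paper's own proof is much shorter, since it declares everything trivial except the single implication that $\vf\in L^2(X)$ vanishing outside $f^{-1}(Y_{\ac})$ has $\nu_\vf$ absolutely continuous, and it proves that implication directly: for a Lebesgue nullset $B$ one has $\nu_\vf(B)=\int_{f^{-1}(B\cap Y_{\ac})}\bar\vf\vf\,{\rm d}\mu$ and $\mu\bigl(f^{-1}(B\cap Y_{\ac})\bigr)=\nu(B\cap Y_{\ac})=\nu_{\ac}(B\cap Y_{\ac})=0$. Your argument contains exactly this computation (it is what gives $f_*\bigl(|\vf|^2\mu|_{f^{-1}(Y_{\ac})}\bigr)\prec\nu_{\ac}\prec\lambda$), but you package it via the uniqueness of the Lebesgue decomposition of $\nu_\vf=f_*\bigl(|\vf|^2\mu\bigr)$, identifying its absolutely continuous and singular parts with the push-forwards of $|\vf|^2\mu$ restricted to $f^{-1}(Y_{\ac})$ and $f^{-1}(Y_{\s})$. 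That packaging buys you both directions of the ac/singular characterizations at once (equality of total masses under push-forward), and you also make explicit a point the paper leaves implicit, namely that concentration on $Y_{\sc}$ is equivalent to concentration on $\sigma_{\sc}(A_f)=Y_{\sc}\cap\essran f$ because $\nu_\vf\prec f_*\mu$, which is concentrated on $\essran f$. Nothing is missing; the difference is only in completeness and organization, with the same two facts doing the real work: $Y_{\ac}$ carries only $\nu_{\ac}$, and $Y_{\s}$ is a Lebesgue nullset.
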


\begin{proof}
The only non-trivial part is the absolute continuity of $\nu_\vf$ with respect to the Lebesgue measure $\lambda$
for $\vf\in L^2(X)$ vanishing outside $f^{-1}(Y_{\ac})$.
To that end, let $B \subset \R$ be a Lebesgue nullset.
Since $\vf$ vanishes outside $f^{-1}(Y_{\ac})$, we readily see that
\[
\nu_{\vf}(B) = \int_{f^{-1}(B)} \bar{\vf} \vf\, {\rm d} \mu = \int_{f^{-1}(B \cap Y_{\ac})} \bar{\vf} \vf \,{\rm d} \mu.
\]
From the fact that $\mu\bigl(f^{-1}(B \cap Y_{\ac})\bigr) = \nu(B \cap Y_{\ac}) = \nu_{\ac}(B \cap Y_{\ac}) = 0$
and the square-integrability of $\vf$, we conclude that $\nu_{\vf}(B) = 0$,
which shows that $\nu_{\vf}$ is absolutely continuous with respect to~$\lambda$.
\end{proof}

The characterization of the various subspaces in terms of the corresponding spectral measures
leads under intertwining to a corresponding decomposition of $H$.

\begin{Proposition}%\label{specdec}
With respect to $A$, we have orthogonal decompositions
\begin{align*}
	H = H_{\ac} \oplus H_{\s} \qquad\text{and}\qquad H_{\s} = H_{\p} \oplus H_{\sc},
\end{align*}
where the associated spectral measures $\nu_\vf$ are absolutely continuous and singular with respect to the Lebesgue measure
respectively are concentrated on $\sigma_{\p}(A)$ and $\sigma_{\sc}(A)$.
\end{Proposition}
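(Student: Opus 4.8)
The plan is to transport the decomposition of $L^2(X)$ established in the preceding proposition through an intertwining operator, and then to observe that the pieces so obtained are intrinsic to $A$.

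First I would invoke the spectral theorem to fix a measure space $(X,\mu)$, a measurable function $f\colon X\to\R$, and an orthogonal respectively unitary transformation $T\colon H\to L^2(X,\mu)$ intertwining $A$ with the multiplication operator $A_f$. I then set $H_{\ac}=T^{-1}(L^2(X)_{\ac})$ and, analogously, $H_{\s}$, $H_{\p}$, $H_{\sc}$ as the $T$-preimages of $L^2(X)_{\s}$, $L^2(X)_{\p}$, $L^2(X)_{\sc}$. Because $T$ is an orthogonal respectively unitary transformation, it carries the orthogonal decompositions $L^2(X)=L^2(X)_{\ac}\oplus L^2(X)_{\s}$ and $L^2(X)_{\s}=L^2(X)_{\p}\oplus L^2(X)_{\sc}$ of the previous proposition to the two asserted orthogonal decompositions of $H$; and since each $L^2(X)_\bullet$ is invariant under multiplication by $f$, each $H_\bullet$ is invariant under $A$.

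Next I would identify these subspaces in terms of spectral measures. Since $T$ intertwines $A$ with $A_f$, functional calculus gives $T\pi_B=(f^*\chi_B)T$ for every Borel set $B\subseteq\R$, where $\pi_B$ denotes the spectral projection of $A$ associated to $B$; hence $\nu_u(B)=\langle u,\pi_Bu\rangle_2=\langle Tu,(f^*\chi_B)Tu\rangle_2=\nu_{Tu}(B)$, so that $\nu_u=\nu_{Tu}$ for every $u\in H$. Combined with the characterization of $L^2(X)_\bullet$ from the previous proposition, this shows $u\in H_{\ac}$ if and only if $\nu_u$ is absolutely continuous with respect to the Lebesgue measure $\lambda$, $u\in H_{\s}$ if and only if $\nu_u$ is singular with respect to $\lambda$, and $u\in H_{\p}$ respectively $u\in H_{\sc}$ if and only if $\nu_u$ is concentrated on $\sigma_{\p}(A)$ respectively $\sigma_{\sc}(A)$ --- the zero measure, attached to $u=0$, trivially having all of these properties.

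The only point that really needs care --- and the closest thing to an obstacle --- is the well-definedness of $H_{\ac}$, $H_{\s}$, $H_{\p}$, $H_{\sc}$, since their definition above made an auxiliary choice of $(X,\mu,f,T)$. But the characterization just obtained describes each of them solely through the spectral measures $\nu_u$ of $A$, which are invariant under intertwining; so the subspaces are canonically attached to $A$, independent of that choice, and this completes the proof.
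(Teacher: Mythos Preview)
Your proposal is correct and follows exactly the route the paper indicates: the paper does not give a formal proof of this proposition, but immediately before stating it remarks that ``the characterization of the various subspaces in terms of the corresponding spectral measures leads under intertwining to a corresponding decomposition of $H$,'' which is precisely the argument you spell out. Your added observation that the spectral-measure description makes the subspaces independent of the choice of $(X,\mu,f,T)$ is a welcome clarification of a point the paper leaves implicit.
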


For $k=1,2$, let $X_k$ be measure spaces with finite measures $\mu_k$,
Let $f_k$ be measurable functions on $X_k$ and $A_k$ be the self-adjoint operator in $L^2(X_k)$
given by multiplication with $f_k$.
Denote by $\nu_k=f_{k*}\mu_k$ the associated Borel measures on $\R$.
Consider the product $X=X_1\times X_2$ with the product measure $\mu=\mu_1\otimes\mu_2$
and the measurable function $f$ on $X$ defined by
\begin{align*}
 f(x_1,x_2) = f_1(x_1) + f_2(x_2).
\end{align*}
Let $A$ be the self-adjoint operator on $L^2(X)$, given by multiplication with $f$,
and denote by $\nu=f_*\mu$ the associated Borel measure on $\R$.
By the definition of $f$ and of the convolution of measures,
\begin{align}\label{nupro}
	\nu = \nu_1*\nu_2
\end{align}
since $\nu(B)=\mu(\{(x_1,x_2)\mid f_1(x_1)+f_2(x_2)\in B\})$, for any Borel set $B\subseteq\R$.

Recall that $L^2(X)$ is equal to the Hilbert tensor product $L^2(X_1)\hat\otimes L^2(X_2)$.
Accordingly, for $k=1,2$, let $A_k$ be a self-adjoint operator on a Hilbert space $H_k$
and define a self-adjoint operator $A$ on the Hilbert tensor product $H=H_1\hat\otimes H_2$ via
\begin{align*}%\label{a12}
	A(u_1\otimes u_2) = (A_1u_1)\otimes u_2 + u_1\otimes(A_2u_2).
\end{align*}
Then Propositions \ref{consupp}--\ref{aaf} and equation \eqref{nupro}
yield the following results.

\begin{Proposition}\label{suma}
In $H=H_1\hat\otimes H_2$, we have
\begin{align*}
	\sigma(A) = \overline{\sigma(A_1)+\sigma(A_2)}.
\end{align*}
If $\sigma(A_1)$ and $\sigma(A_2)$ are bounded from below,
then $\sigma(A)=\sigma(A_1)+\sigma(A_2)$.
\end{Proposition}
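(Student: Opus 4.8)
The plan is to reduce the statement about $\sigma(A)$ for the tensor-product operator $A = A_1 \hat\otimes 1 + 1 \hat\otimes A_2$ on $H = H_1 \hat\otimes H_2$ to the already-established computation for multiplication operators, via the multiplicative form of the spectral theorem. First I would invoke the spectral theorem to fix, for $k=1,2$, a measure space $X_k$ with finite measure $\mu_k$ and a measurable real function $f_k$ on $X_k$, together with an intertwiner $T_k \colon H_k \to L^2(X_k)$ carrying $A_k$ to the multiplication operator $A_{f_k}$. The tensor product $T = T_1 \hat\otimes T_2 \colon H \to L^2(X_1) \hat\otimes L^2(X_2) = L^2(X_1 \times X_2)$ is then an orthogonal (resp.\ unitary) transformation, and a short check on elementary tensors $u_1 \otimes u_2$ shows that $T$ intertwines $A$ with the multiplication operator $A_f$ on $L^2(X)$, where $X = X_1 \times X_2$, $\mu = \mu_1 \otimes \mu_2$, and $f(x_1, x_2) = f_1(x_1) + f_2(x_2)$. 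This is exactly the setup described in the paragraphs immediately preceding the statement.

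Next, since the spectrum is invariant under intertwining, I would use Proposition~\ref{aaf} to identify $\sigma(A) = \sigma(A_f) = \essran f$ and likewise $\sigma(A_k) = \essran f_k = \supp\nu_k$, where $\nu_k = f_{k*}\mu_k$. Now $\essran f = \supp\nu$ with $\nu = f_*\mu$, and by \eqref{nupro} we have $\nu = \nu_1 * \nu_2$. Applying Proposition~\ref{consupp}, which computes the support of a convolution, we get
\begin{align*}
	\sigma(A) = \supp\nu = \overline{\supp\nu_1 + \supp\nu_2} = \overline{\sigma(A_1) + \sigma(A_2)},
\end{align*}
which is the first assertion. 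For the second assertion, if $\sigma(A_1)$ and $\sigma(A_2)$ are bounded from below, then so are $\supp\nu_1$ and $\supp\nu_2$, and the second part of Proposition~\ref{consupp} gives $\supp\nu = \supp\nu_1 + \supp\nu_2$ without the closure, hence $\sigma(A) = \sigma(A_1) + \sigma(A_2)$.

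The main thing to be careful about — though it is not deep — is the identification of $A$ with $A_f$ under $T_1 \hat\otimes T_2$: one must verify that $A$ as defined (on the algebraic tensor product, then closed) really is unitarily equivalent to multiplication by $f_1(x_1) + f_2(x_2)$, including that the natural domains match up. This is standard (it is how the sum of commuting self-adjoint operators on a tensor product is usually defined), and on elementary tensors the computation is immediate from $f_k \vf_k \in L^2(X_k)$; the closure and domain bookkeeping is routine. Everything else is a direct appeal to Propositions~\ref{consupp} and \ref{aaf} together with equation \eqref{nupro}, exactly as the sentence preceding the statement already announces.
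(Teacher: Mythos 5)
Your argument is correct and is exactly the route the paper intends: realize $A_1$, $A_2$ as multiplication operators via the spectral theorem, identify $A$ with multiplication by $f_1(x_1)+f_2(x_2)$ on $L^2(X_1\times X_2)$, and then combine Proposition~\ref{aaf}, equation \eqref{nupro}, and Proposition~\ref{consupp}. Your extra remark about checking the domain identification under $T_1\hat\otimes T_2$ is a reasonable point of care, but it does not change the substance; the paper treats this as part of the setup preceding the statement.
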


\begin{Proposition}%\label{sumb}
In $H=H_1\hat\otimes H_2$, we have
\begin{gather*}
	H_{\p}(A) = H_{\p}(A_1) \hat\otimes H_{\p}(A_2) \qquad\text{with}\quad
	\sigma_{\p}(A) = \sigma_{\p}(A_1) + \sigma_{\p}(A_2); \\
	H_{\ac}(A) \supseteq
	H_{\ac}(A_1) \hat\otimes H_{\ac}(A_2) \oplus H_{\ac}(A_1) \hat\otimes H_{\s}(A_2)
	\oplus H_{\s}(A_1) \hat\otimes H_{\ac}(A_2).
\end{gather*}
\end{Proposition}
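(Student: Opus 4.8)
The plan is to derive this Proposition from the multiplicative-model results already established in Appendix~\ref{secspec}, exactly as suggested by the sentence preceding the statement (``Then Propositions \ref{consupp}--\ref{aaf} and equation \eqref{nupro} yield the following results''). First I would invoke the spectral theorem to realize $A_k$, for $k=1,2$, as multiplication by a measurable function $f_k$ on a finite measure space $(X_k,\mu_k)$, intertwined by a unitary $T_k\colon H_k\to L^2(X_k)$. Then $T=T_1\hat\otimes T_2\colon H\to L^2(X_1)\hat\otimes L^2(X_2)=L^2(X_1\times X_2,\mu_1\otimes\mu_2)$ intertwines $A$ with multiplication by $f(x_1,x_2)=f_1(x_1)+f_2(x_2)$, since on elementary tensors $A(u_1\otimes u_2)=(A_1u_1)\otimes u_2+u_1\otimes(A_2u_2)$ corresponds to $(f_1\vf_1)\otimes\vf_2+\vf_1\otimes(f_2\vf_2)=f\cdot(\vf_1\otimes\vf_2)$. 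Thus it suffices to prove both assertions for the multiplication operator $A=A_f$, and then transport them back via $T$, which preserves all spectral data (spectrum, point spectrum, the subspaces $H_\p$, $H_\ac$, $H_\s$).

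For the point-spectrum assertion, I would use Proposition~\ref{aaf}, which identifies $\sigma_\p(A_f)=\{y\mid\nu(y)>0\}$ with $Y_\p$, together with \eqref{nupro} saying $\nu=\nu_1*\nu_2$, and the first line of Proposition~\ref{consum}, which gives $\nu_\p=\nu_{1\p}*\nu_{2\p}$ and $Y_\p=Y_{1\p}+Y_{2\p}$. This immediately yields $\sigma_\p(A)=\sigma_\p(A_1)+\sigma_\p(A_2)$. For the identification $H_\p(A)=H_\p(A_1)\hat\otimes H_\p(A_2)$, I would characterize $H_\p(A_f)$ as the set of $\vf\in L^2(X)$ whose spectral measure $\nu_\vf$ is concentrated on $Y_\p$, i.e.\ (up to the mass outside $\essran f$, which is irrelevant) the set of $\vf$ vanishing $\mu$-a.e.\ outside $f^{-1}(Y_\p)$; since $Y_\p=Y_{1\p}+Y_{2\p}$ is exactly the set on which $f$ takes values that are sums of atoms, $f^{-1}(Y_\p)$ agrees up to a $\mu$-nullset with $f_1^{-1}(Y_{1\p})\times f_2^{-1}(Y_{2\p})$, and $L^2$ of a product set splits as the Hilbert tensor product of the factor $L^2$'s. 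Transporting back gives $H_\p(A)=H_\p(A_1)\hat\otimes H_\p(A_2)$.

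For the $H_\ac$ inclusion, I would argue that $H_\ac(A_1)\hat\otimes H_\ac(A_2)$, $H_\ac(A_1)\hat\otimes H_\s(A_2)$, and $H_\s(A_1)\hat\otimes H_\ac(A_2)$ are each spanned by vectors $u_1\otimes u_2$ where at least one factor has absolutely continuous spectral measure. It suffices to show that for such an elementary tensor the spectral measure $\nu_{u_1\otimes u_2}$ is absolutely continuous with respect to Lebesgue measure $\lambda$: indeed, $\nu_{u_1\otimes u_2}=\nu_{u_1}*\nu_{u_2}$ (the spectral measure of a tensor is the convolution, by the same computation as \eqref{nupro} applied to $\vf=\varphi_1\otimes\varphi_2$), and by the last line of Proposition~\ref{consum}, if $\nu_{u_1}\prec\lambda$ or $\nu_{u_2}\prec\lambda$ then $\nu_{u_1\otimes u_2}\prec\lambda$. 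Hence each of the three tensor-product subspaces lies in $H_\ac(A)$, and since $H_\ac(A)$ is closed, so does their (orthogonal) sum; this gives the stated inclusion. The main subtlety I anticipate is the bookkeeping around passing between ``concentrated on $Y_\p$'' and ``vanishing outside $f^{-1}(Y_\p)$'' and checking the identification $f^{-1}(Y_\p)\doteq f_1^{-1}(Y_{1\p})\times f_2^{-1}(Y_{2\p})$ only up to $\mu$-nullsets — one must be careful that points of $Y_\p$ could in principle be written as sums $y_1+y_2$ with $y_i\notin Y_{i\p}$, but the atomicity forces $\nu_1\otimes\nu_2$ to charge such a decomposition only on a nullset; everything else is a routine transfer through the unitary $T$.
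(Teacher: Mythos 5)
Your proposal is correct and follows exactly the route the paper intends: transport $A$ via the spectral theorem to multiplication by $f_1(x_1)+f_2(x_2)$ on $L^2(X_1\times X_2)$, use $\nu=\nu_1*\nu_2$ together with Proposition~\ref{consum} and the description of the subspaces $L^2(X)_{\p}$, $L^2(X)_{\ac}$, and note that spectral measures of elementary tensors are convolutions. Your handling of the only delicate point --- that $f^{-1}(Y_{\p})$ coincides with $f_1^{-1}(Y_{1\p})\times f_2^{-1}(Y_{2\p})$ up to a $\mu$-nullset, by a Fubini/atomicity argument --- is exactly what is needed, so this matches the paper's (implicit) argument.
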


\begin{Corollary}\label{sumc}\quad
\begin{enumerate}[label={\rm (\arabic*)}]\itemsep=0pt
\item If $H_{\p}$ of $A_1$ \emph{or} $A_2$ vanishes, then $H_{\p}(A)=\{0\}$.%\label{sup}
\item If $H_{\s}$ of $A_1$ \emph{or} $A_2$ vanishes, then $H_{\s}(A)=\{0\}$.\label{sus}
\item If $H_{\sc}$ of $A_1$ \emph{and} $A_2$ vanishes, then $H_{\sc}(A)=\{0\}$.%\label{susc}
\end{enumerate}
\end{Corollary}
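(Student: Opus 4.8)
The plan is to read off all three items from the Proposition immediately preceding the corollary, combined with the orthogonal decompositions $H_k = H_{\ac}(A_k)\oplus H_{\s}(A_k)$ and $H_{\s}(A_k)=H_{\p}(A_k)\oplus H_{\sc}(A_k)$ for $k=1,2$, together with the elementary fact that if $H_k = V_k\oplus W_k$ are orthogonal decompositions, then the Hilbert tensor product $H_1\hat\otimes H_2$ is the orthogonal direct sum of $V_1\hat\otimes V_2$, $V_1\hat\otimes W_2$, $W_1\hat\otimes V_2$, $W_1\hat\otimes W_2$. Item (1) is then instantaneous: the Proposition gives $H_{\p}(A)=H_{\p}(A_1)\hat\otimes H_{\p}(A_2)$, and a Hilbert tensor product in which one factor is $\{0\}$ is itself $\{0\}$.

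For (2), assume without loss of generality $H_{\s}(A_2)=\{0\}$, so that $H_2=H_{\ac}(A_2)$. Decomposing $H_1=H_{\ac}(A_1)\oplus H_{\s}(A_1)$ and distributing the tensor product gives
\begin{align*}
 H = \bigl(H_{\ac}(A_1)\hat\otimes H_{\ac}(A_2)\bigr)\oplus\bigl(H_{\s}(A_1)\hat\otimes H_{\ac}(A_2)\bigr).
\end{align*}
By the Proposition both summands lie in $H_{\ac}(A)$, hence $H=H_{\ac}(A)$ and therefore $H_{\s}(A)=\{0\}$. The case $H_{\s}(A_1)=\{0\}$ is symmetric.

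For (3), assume $H_{\sc}(A_1)=H_{\sc}(A_2)=\{0\}$, so that $H_k=H_{\ac}(A_k)\oplus H_{\p}(A_k)$. Distributing the tensor product expresses $H$ as the orthogonal sum of four pieces; since $H_{\p}(A_k)\subseteq H_{\s}(A_k)$, the three pieces containing at least one $\ac$-factor are contained in $H_{\ac}(A)$ by the Proposition, while the remaining piece $H_{\p}(A_1)\hat\otimes H_{\p}(A_2)$ equals $H_{\p}(A)$, again by the Proposition. Thus $H=V\oplus H_{\p}(A)$ with $V\subseteq H_{\ac}(A)$ and $V\perp H_{\p}(A)$. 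The one point that needs a moment's care is that one should not conclude $V=H_{\ac}(A)$ from the Proposition directly, but rather by comparing orthogonal complements: from the orthogonal decomposition $H=H_{\ac}(A)\oplus H_{\s}(A)$ and from $V\subseteq H_{\ac}(A)$ one gets $H_{\s}(A)=H_{\ac}(A)^{\perp}\subseteq V^{\perp}=H_{\p}(A)$, and since always $H_{\p}(A)\subseteq H_{\s}(A)$ this forces $H_{\s}(A)=H_{\p}(A)$, i.e., $H_{\sc}(A)=\{0\}$. This complement-chasing step is the only mild obstacle; everything else is bookkeeping with the tensor decomposition.
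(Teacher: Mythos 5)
Your proposal is correct and follows exactly the route the paper intends: the corollary is stated as an immediate consequence of the preceding Proposition on $H_{\p}(A)$ and $H_{\ac}(A)$ in $H_1\hat\otimes H_2$, combined with the orthogonal decompositions of each factor, which is precisely your bookkeeping. The complement-chasing step you flag in item (3) is the right way to handle the fact that the Proposition only gives an inclusion for $H_{\ac}(A)$, so nothing is missing.
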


Proposition~\ref{suma} is the version of the Ichinose lemma for self-adjoint operators, not necessarily semi-bounded, on Hilbert spaces;
compare with~\cite[Theorem 4.1]{Ich72} and~\cite[Section XIII.9]{ReedSimon78}
and references therein.

%%%%%%%%%%%%%%%%%%%%%%%%%%%%%%%%%%%%%%%%%%%%%%
\subsection{Spectra of Riemannian manifolds}\label{subrie}
%%%%%%%%%%%%%%%%%%%%%%%%%%%%%%%%%%%%%%%%%%%%%%
In the present paper, we study Laplacians $\Delta$
of complete and connected Riemannian manifolds $M$ or domains $D$
as self-adjoint operators in their respective spaces of square-integrable functions.

In the case of Riemannian products, $M=M_1\times M_2$,
\begin{align*}%\label{del12}
	\Delta(u_1\otimes u_2) = (\Delta u_1)\otimes u_2 + u_1\otimes(\Delta u_2).
\end{align*}
Since the Laplacian is bounded from below, Proposition~\ref{suma} yields

\begin{Corollary}\label{sumdel}
We have $\sigma(M)=\sigma(M_1)+\sigma(M_2)$.
\end{Corollary}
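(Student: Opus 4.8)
The plan is to deduce Corollary~\ref{sumdel} directly from Proposition~\ref{suma} applied to the Laplacians $A_1=\Delta_{M_1}$ and $A_2=\Delta_{M_2}$, using the standard identification of $L^2(M_1\times M_2)$ with the Hilbert tensor product $L^2(M_1)\hat\otimes L^2(M_2)$ under which the product Laplacian takes the form $\Delta(u_1\otimes u_2)=(\Delta u_1)\otimes u_2+u_1\otimes(\Delta u_2)$, exactly as displayed just above the statement. The only extra ingredient needed beyond Proposition~\ref{suma} is that the Laplacian of a complete Riemannian manifold is a non-negative self-adjoint operator, hence in particular bounded from below; this is precisely the hypothesis under which Proposition~\ref{suma} gives an honest sum (rather than merely the closure of the sum) of the spectra.

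Concretely, first I would recall that $\Delta_{M_1\times M_2}$, defined as the closure of the Laplacian on $C^\infty_{\rm c}(M_1\times M_2)$, agrees with the self-adjoint operator $A$ associated to $A_1=\Delta_{M_1}$ and $A_2=\Delta_{M_2}$ on the tensor product as in the preceding paragraph of the excerpt — this is where one invokes that $C^\infty_{\rm c}(M_1)\otimes C^\infty_{\rm c}(M_2)$ is a core and that the separate Laplacians commute in the strong resolvent sense. Second, since each $\sigma(M_i)=\sigma(A_i)\subseteq[0,\infty)$ is bounded from below, the second clause of Proposition~\ref{suma} applies verbatim and yields $\sigma(A)=\sigma(A_1)+\sigma(A_2)$, i.e.\ $\sigma(M_1\times M_2)=\sigma(M_1)+\sigma(M_2)$.

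The step I expect to require the most care is the identification of the closed product Laplacian with the tensor-product operator $A$ — that is, verifying that no self-adjointness subtlety intervenes so that the abstract machinery of Section~\ref{subrie} genuinely applies to $\Delta_{M_1\times M_2}$. In the completeness setting this is classical (the Laplacian is essentially self-adjoint on $C^\infty_{\rm c}$, and $\Delta_{M_1\times M_2}=\overline{\Delta_{M_1}\otimes I+I\otimes\Delta_{M_2}}$), so once that identification is in place the corollary is immediate; everything else is a direct citation of Proposition~\ref{suma}.

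\begin{proof}
As recalled above, under the canonical unitary identification $L^2(M_1\times M_2)=L^2(M_1)\hat\otimes L^2(M_2)$ the Laplacian of the product corresponds to the self-adjoint operator $A$ associated to $A_1=\Delta_{M_1}$ and $A_2=\Delta_{M_2}$ via $A(u_1\otimes u_2)=(A_1u_1)\otimes u_2+u_1\otimes(A_2u_2)$. Since the Laplacian of a complete Riemannian manifold is a non-negative self-adjoint operator, both $\sigma(M_1)=\sigma(A_1)$ and $\sigma(M_2)=\sigma(A_2)$ are contained in $[0,\infty)$, hence bounded from below. Therefore the second assertion of Proposition~\ref{suma} applies and gives $\sigma(A)=\sigma(A_1)+\sigma(A_2)$, that is, $\sigma(M)=\sigma(M_1)+\sigma(M_2)$.
\end{proof}
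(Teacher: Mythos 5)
Your proposal is correct and follows essentially the same route as the paper: identify the product Laplacian with the tensor-sum operator on $L^2(M_1)\hat\otimes L^2(M_2)$, note that the Laplacians are bounded from below, and apply the second clause of Proposition~\ref{suma}. Your extra remark on verifying the identification of the closed product Laplacian with the abstract tensor-product operator is a point the paper passes over silently, but it does not change the argument.
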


The following example is well-known.

\begin{exa}\label{rmac}
Via the Fourier transform, the Laplacian $\Delta$ of $\R^m$ corresponds to multiplication with $|x|^2$ on $L^2(\R^m)$.
Therefore, the spectrum of $\R^m$ is absolutely continuous with ${\sigma(\R^m)=[0,\infty)}$.
\end{exa}

Together with this example, Corollary~\ref{sumc}\,\ref{sus} yields

\begin{Corollary}\label{eufac}
For any complete and connected Riemannian manifold $M$,
the spectrum of $M\times\R^k$ is absolutely continuous, for any $k\ge1$.
\end{Corollary}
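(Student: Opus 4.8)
The plan is to deduce Corollary~\ref{eufac} by applying Corollary~\ref{sumc}\,\ref{sus} to the Riemannian product decomposition $M \times \R^k = M_1 \times M_2$ with $M_1 = M$ and $M_2 = \R^k$. First I would record that, by the product formula for the Laplacian on a Riemannian product stated just above Corollary~\ref{sumdel}, the Laplacian $\Delta$ on $L^2(M \times \R^k) = L^2(M) \hat\otimes L^2(\R^k)$ is exactly of the tensor-sum form $A(u_1 \otimes u_2) = (A_1 u_1)\otimes u_2 + u_1 \otimes (A_2 u_2)$ to which the abstract results of the previous subsection apply, with $A_1 = \Delta_M$ and $A_2 = \Delta_{\R^k}$.

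Next I would invoke Example~\ref{rmac}: the spectrum of $\R^m$ is purely absolutely continuous, i.e.\ $H_{\ac}(\R^m) = L^2(\R^m)$, and hence $H_{\s}(\R^m) = \{0\}$. For $k \ge 1$ this gives $H_{\s}(A_2) = H_{\s}(\Delta_{\R^k}) = \{0\}$. Then Corollary~\ref{sumc}\,\ref{sus}, which says that the singular subspace $H_{\s}(A)$ of the tensor sum vanishes as soon as the singular subspace of \emph{one} of the two factors vanishes, yields $H_{\s}(M \times \R^k) = \{0\}$. Since $H_{\ac} \oplus H_{\s} = L^2$ with $H_{\p}, H_{\sc} \subseteq H_{\s}$, the vanishing of $H_{\s}$ is precisely the statement that $H_{\ac}(M \times \R^k) = L^2(M \times \R^k)$, i.e.\ the spectrum of $M \times \R^k$ is absolutely continuous. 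This holds for arbitrary complete connected $M$ because no hypothesis on $A_1$ was needed.

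There is essentially no obstacle here: the statement is a direct corollary, and the only point requiring a word of care is that the abstract machinery of the appendix is phrased for self-adjoint operators given in multiplicative form on finite measure spaces (or, in the tensor version, on Hilbert spaces), so one should note that the Laplacians of complete Riemannian manifolds are self-adjoint and bounded below, and that $L^2(M \times \R^k)$ is canonically the Hilbert tensor product $L^2(M) \hat\otimes L^2(\R^k)$ with $\Delta$ acting as the corresponding tensor sum — all of which is already set up in the paragraphs preceding the corollary. If anything deserves emphasis it is the asymmetry exploited: $M$ itself may have point or singular continuous spectrum, but convolving its spectral measures with the absolutely continuous spectral measures coming from the Euclidean factor washes out all singular parts, which is exactly the content of Proposition~\ref{consum} feeding into Corollary~\ref{sumc}\,\ref{sus}.
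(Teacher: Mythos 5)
Your argument is exactly the paper's: combine Example~\ref{rmac} (so $H_{\s}\bigl(\R^k\bigr)=\{0\}$) with Corollary~\ref{sumc}\,\ref{sus} applied to the tensor-sum form of the Laplacian on the product, which is precisely how the corollary is deduced in the text. The proposal is correct and adds only harmless elaboration about the tensor-product setup already in place before the corollary.
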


\subsection*{Acknowledgements}

We would like to thank the Max Planck Institute for Mathematics in Bonn for its support and hospitality.
The second author’s research was also partially supported by SEED Grant RD/0519-IRCCSH0-024.
We would like to thank our institutions for providing us with ideal working conditions.
We are also grateful to the referees for their helpful comments.

\pdfbookmark[1]{References}{ref}
\LastPageEnding

\end{document}